\documentclass[onefignum,onetabnum]{article}

\usepackage{arxiv}
\usepackage{amsmath, amsthm, amssymb,mleftright}
\usepackage{amsfonts}
 
\usepackage{parskip}
\usepackage{tikz,tikz-3dplot}
\usepackage{framed,multirow}
\usepackage[utf8]{inputenc} 
\usepackage[T1]{fontenc}    
\usepackage{hyperref}       
\usepackage{url}            
\usepackage{booktabs}       
\usepackage{amsfonts}       
\usepackage{nicefrac}       
\usepackage{microtype}      
\usepackage{graphicx}
\usepackage[numbers,sort]{natbib}
\usepackage{doi}
\usepackage{mathrsfs}
\usepackage{subcaption}
\usepackage{listings}
\usepackage{algorithm,algpseudocode}

\numberwithin{equation}{section}

\usepackage{url}
\usepackage{xcolor}
\definecolor{newcolor}{rgb}{.8,.349,.1}

\newtheorem{prop}{Proposition}
\newtheorem{theorem}{Theorem}
\newtheorem{definition}{Definition}[section] 
\newtheorem{remarknn}{Remark}
\setlength{\parskip}{0pt}
\title{Factorized structure of  the long-range two-electron integrals tensor  and its application in quantum chemistry}


\author{ {\hspace{1mm}Siwar Badreddine}\thanks{Laboratoire Jacques-Louis Lions, Sorbonne Université, INRIA Alpines, (siwar.badreddine@inria.fr).}  
	\And
	 { \hspace{1mm} Igor Chollet}\thanks{Laboratoire Analyse, Géométrie et Applications,
(igor.chollet@inria.fr).}    
\And
	 { \hspace{1mm} Laura Grigori}\thanks{Laboratoire Jacques-Louis Lions, Sorbonne Université, INRIA Alpines,
(laura.grigori@inria.fr).}    
}



\hypersetup{
pdftitle={A template for the arxiv style},
pdfsubject={q-bio.NC, q-bio.QM},
pdfauthor={David S.~Hippocampus, Elias D.~Striatum},
pdfkeywords={First keyword, Second keyword, More},
}

\begin{document}
\def\lrttei{TA}
\def\fmm{LTEI-FMM}
\renewcommand{\algorithmicrequire}{\textbf{Input:}}
\renewcommand{\algorithmicensure}{\textbf{Output:}}
\maketitle

\begin{abstract}
We introduce two new approximation methods for the  numerical evaluation of the long-range Coulomb potential and  the  approximation of the resulting high dimensional Two-Electron Integrals tensor (TEI)  with  long-range interactions arising in molecular simulations. The first method exploits the tensorized structure of the compressed two-electron integrals obtained through two-dimensional Chebyshev interpolation combined with Gaussian quadrature. The  second method is based on  the Fast Multipole Method (FMM). Numerical experiments for different medium size  molecules on high quality basis sets outline the efficiency of the two methods. Detailed algorithmic is provided in this paper as well as numerical comparison of the introduced approaches.
\end{abstract}

\keywords{ Two-electron integrals \and tensor compression \and Numerical integration \and Interpolation \and Fast Multipole Method (FMM) \and Chebyshev Polynomials  \and  Quantum chemistry  }

\section{Introduction}
 In this paper we  are interested in  the numerical evaluation of the  long-range Coulomb interaction  and the   approximation of the resulting Two-Electron Integrals (TEI) tensor. The evaluation of the two-electron integrals is considered as a challenging problem in quantum chemistry. These integrals are  essential  to approximate the solution of the so-known Schrodinger equation for a general N-body system \cite{box} arising   in electronic and molecular structure calculations. This equation  describes the state function of a quantum-mechanical system which is given  in the time-independant form  as follows \cite{CANCES20033},
\begin{equation}
\label{eq::Schro}
     \mathbf{H} \mathbf{\psi}=E \mathbf{\psi},
\end{equation}
where $\mathbf{H}$ is the Hamiltonian operator that can be described by the sum of three terms: the kinetic energy, the Coulomb interaction between electrons and nuclei, and the electron-electron Coulomb repulsion  \cite{hamiltonian,quatummech}, $\mathbf{\psi}$ is the wave-function or the state-function, and $E$ is the full energy of the system. Under the Born-Oppenheimer approximation, i.e the motion of atomic nuclei and electrons can be treated separately given that the nuclei are much heavier than the electrons \cite{hamiltonian,quatummech}, finding an exact, analytic solution of the Schrodinger
equation becomes intractable for systems with more than one electron \cite{hamiltonian}. Therefore,  additional assumptions are considered such as the Hartree-fock strategy  and  the Galerkin approximation procedure \cite{quatummech}. These assumptions yield to include the evaluation of the two-electron integrals such that,  given the finite basis set $\left\{g_{\mu}\right\}_{1 \leq \mu \leq N_{b}}$, $g_{\mu} \in H^{1}(\mathbb{R}^{3})$, these integrals are defined by  \cite{khorom}
\begin{equation}
       \mathcal{B}(\mu,\nu,\kappa,\lambda)=\int_{ \mathbb{R}^{3}}\int_{ \mathbb{R}^{3}}\frac{g_{\mu}(\boldsymbol{x}) g_{\nu}(\boldsymbol{x})g_{\kappa}(\boldsymbol{y})g_{ \lambda }(\boldsymbol{y})}{ \left \| \boldsymbol{x}-\boldsymbol{y}    \right \|}d\boldsymbol{x}d\boldsymbol{y},~with ~\mu,\nu,\kappa,\lambda \in \left \{ 1,..,N_{b} \right \}.
\end{equation}
These six-dimensional integrals are the entries of a fourth-order tensor, referred to as $\mathcal{B}$, with $O(N_b^4)$ entries with $N_b$  being the number of basis functions $\left\{g_{\mu}\right\}_{1 \leq \mu \leq N_{b}}$. Many works exist in the literature for the analytic evaluation of these integrals using certain types of basis functions, mainly Slater type functions and Gaussian-type functions \cite{theo1,theo2}.
Considerable efforts have been devoted to minimize the cost of the integrals evaluation which is a challenging computational problem since it requires the evaluation $N_b^4$ six-dimensional  integrals that are singular due to the presence of the Coulomb potential $\frac{1}{ \left \| \boldsymbol{x}-\boldsymbol{y}\right \|}$  and where $N_b$ increases drastically with the molecular system size. An alternative approach to tackle this problem  is to develop methods dealing with smooth potential. We consider in our work an approach that relies on  the range-seperation of the Coulomb potential \cite{separation,Sepemma,Sepsavin,theo1,short,FMM,portee,Lecours,Resolutions,unknown}  where  the last is split into a smooth range-part and a complementary diverging part. The splitting is done through the function $erf(\omega \left \| \boldsymbol{x}-\boldsymbol{y}    \right \|) $ with $\omega$ being the range separator parameter. This separation writes 
\begin{equation}
\label{eq_erf_erfc}
    \frac{1}{ \left \| \boldsymbol{x}-\boldsymbol{y}    \right \|} = \frac{erfc\left(\omega  \left \| \boldsymbol{x}-\boldsymbol{y}    \right \|\right)}{ \left \| \boldsymbol{x}-\boldsymbol{y}    \right \|}+\frac{erf\left(\omega  \left \| \boldsymbol{x}-\boldsymbol{y}    \right \|\right)}{ \left \| \boldsymbol{x}-\boldsymbol{y}    \right \|}, 0 \leq \omega <  \infty,  
\end{equation}
with 
\begin{equation}
    \label{eq::long-r}
    \frac{erf\left(\omega  \left \| \boldsymbol{x}-\boldsymbol{y}    \right \|\right)}{ \left \| \boldsymbol{x}-\boldsymbol{y}    \right \|}
    =\frac{2}{ \left \| \boldsymbol{x}-\boldsymbol{y}    \right \|\sqrt\pi}\int_{   0}^{ \omega \left \| \boldsymbol{x}-\boldsymbol{y}    \right \|}exp(-t^{2})dt, \text{ and } erfc\left(\omega  \left \| \boldsymbol{x}-\boldsymbol{y}    \right \|\right)=1-erf\left(\omega  \left \| \boldsymbol{x}-\boldsymbol{y}    \right \|\right),
\end{equation}
where $\boldsymbol{x}=(\boldsymbol{x}_1,\boldsymbol{x}_2,\boldsymbol{x}_3), \boldsymbol{y}=(\boldsymbol{y}_1,\boldsymbol{y}_2,\boldsymbol{y}_3) \in \mathbb{R}^{3}$,  $\omega$ is a positive parameter that controls the separation range. The long-range contribution in equation  \eqref{eq::long-r} is  a smooth function  such that, for small $\omega$, the singularity is eliminated at  $ \left \| \boldsymbol{x}-\boldsymbol{y}    \right \|=0$. When $\omega=0$, the long-range part vanishes and when $\omega \rightarrow \infty$, it approaches the Coulomb potential $\frac{1}{\left \| \boldsymbol{x}-\boldsymbol{y}    \right \| }$.  The  short range contribution (the complementary function in equation \eqref{eq_erf_erfc})  has singularity at $ \left \| \boldsymbol{x}-\boldsymbol{y}    \right \|=0$.  The long-range part, that we denote $K(\boldsymbol{x},\boldsymbol{y})= \frac{erf(\omega \left \| \boldsymbol{x}-\boldsymbol{y}    \right \| ))}{ \left \| \boldsymbol{x}-\boldsymbol{y}    \right \| }, \boldsymbol{x},\boldsymbol{y} \in \mathbb{R}^3$,  is treated usually through employing numerical integration in Fourier space \cite{PMID:34686052}. Following equation \eqref{eq_erf_erfc}, the two-electron integrals tensor can be expressed as the sum of two terms
\begin{small}
\begin{eqnarray}
    \mathcal{B}(\mu,\nu,\kappa,\lambda) &=& \underbrace{\int_{\mathbb{R}^{3}}\int_{\mathbb{R}^{3}}\frac{erf\left(\omega  \left \| \boldsymbol{x}-\boldsymbol{y}    \right \| \right) g_{\mu}(\boldsymbol{x}) g_{\nu}(\boldsymbol{x})g_{\kappa}(\boldsymbol{y})g_{ \lambda }(\boldsymbol{y})}{ \left \| \boldsymbol{x}-\boldsymbol{y}    \right \|}d\boldsymbol{x}d\boldsymbol{y}}_{\mathcal{B}^{lr}(\mu,\nu,\kappa,\lambda)} \\
&+&\underbrace{\int_{\mathbb{R}^{3}}\int_{\mathbb{R}^{3}}\frac{erfc\left(\omega  \left \| \boldsymbol{x}-\boldsymbol{y}    \right \| \right) g_{\mu}(\boldsymbol{x}) g_{\nu}(\boldsymbol{x})g_{\kappa}(\boldsymbol{y})g_{ \lambda }(\boldsymbol{y})}{ \left \| \boldsymbol{x}-\boldsymbol{y}    \right \|}d\boldsymbol{x}d\boldsymbol{y}}_{\mathcal{B}^{sr}(\mu,\nu,\kappa,\lambda)},
\end{eqnarray}
\end{small}
with ~$\mu,\nu,\kappa,\lambda \in \left \{ 1,..,N_{b} \right \},$ $\mathcal{B}^{lr}$ is referring to the long-range  two-electron integrals tensor and $\mathcal{B}^{sr}$ is referring to the short-range two-electron integrals tensor. In this paper, we focus on the  numerical evaluation of the long-range kernel $K(\boldsymbol{x},\boldsymbol{y})$ and on the approximation of the long-range two-electron integrals given by 
\begin{equation}
\label{long-range}
    \mathcal{B}^{lr}(\mu,\nu,\kappa,\lambda)=\int_{\mathbb{R}^{3}}\int_{\mathbb{R}^{3}}g_{\mu}(\boldsymbol{x}) g_{\nu}(\boldsymbol{x})K(\boldsymbol{x},\boldsymbol{y}) g_{\kappa}(\boldsymbol{y})g_{ \lambda }(\boldsymbol{y}) d\boldsymbol{x}d\boldsymbol{y}, \mu,\nu,\kappa,\lambda \in  \left \{ 1,\ldots,N_{b} \right \}.
\end{equation}
We use finite linear combinations of primitive Gaussians as basis functions $\left\{g_{\mu}\right\}_{1 \leq \mu \leq N_{b}}$. Such basis functions are expressed as   linear combinations of $I_\mu$ primitive Gaussians functions \cite{set} 
\begin{equation}\label{basis}
g_{\mu}(\boldsymbol{x})
= \sum_{j=1}^{I_\mu} c_{j} \prod_{l=1}^{3}g_{\mu}^{(j)}(\boldsymbol{x}_l),\boldsymbol{x}_l \in \mathbb{R}, I_\mu \in \mathbb{N},
\end{equation}
where  primitive Gaussians are defined by
\begin{equation}
    \label{eq::primi}
    g_{\mu}^{(j)}(\boldsymbol{x}_l)=(\boldsymbol{x}_l-\boldsymbol{r}_{l})^{p_{\mu_l}}exp\left (-\mu_{j}   (\boldsymbol{x}_l-\boldsymbol{r}_l)^{2} \right ),\boldsymbol{x}_l \in \mathbb{R},\mu \in \left \{ 1..N_b \right \},
\end{equation}
where  $N_b$ is the number of basis functions defined in \eqref{basis}, $c_{j}$ refers to a normalization constant, $\mu_j$ is a parameter whose reference value is, for instance, given in \cite{basis}, $\boldsymbol{r}_l$ refers to the coordinates of atom nucleus that is known in practice, and the $p_{\mu_l}$ are exponents depending  on the chosen basis function. These basis functions $\left\{g_{\mu}\right\}_{1 \leq \mu \leq N_{b}}$ correspond in chemistry to approximations of the atomic orbitals. In addition, we consider restrictions of these basis functions to sufficiently large  compact support $[-b,b]^3 \subset \mathbb{R}^{3}$  such that we have 
\begin{equation}
\label{eq::compact}
    \mathcal{B}^{lr}(\mu,\nu,\kappa,\lambda)=\int_{[-b,b]^{3}}\int_{[-b,b]^{3}}g_{\mu}(\boldsymbol{x}) g_{\nu}(\boldsymbol{x})K(\boldsymbol{x},\boldsymbol{y}) g_{\kappa}(\boldsymbol{y})g_{ \lambda }(\boldsymbol{y}) d\boldsymbol{x}d\boldsymbol{y}, \mu,\nu,\kappa,\lambda \in  \left \{ 1,\ldots,N_{b} \right \}.
\end{equation}
We note that the range-separation representation of the Coulomb potential is important in molecular simulations to describe non-local correlation effects and to allow an accurate evaluation of the  long-range two-electron integrals while keeping the computational cost reasonably low \cite{separation}.

In this work, we introduce two numerical approaches  for the numerical evaluation of  the smooth long-range interaction and the approximation of  the long-range two electron integrals tensor. First, instead of performing  a naive numerical computation of $K(\boldsymbol{x},\boldsymbol{y})$ over $N\times N \times N$ 3D Cartesian grids, we consider two-dimensional Chebyshev interpolation method using only $N^{\frac{1}{3}} \times N^{\frac{1}{3}}$ isotropic Chebyshev  grids combined with Gaussian-quadrature rule in order to approximate $K(\boldsymbol{x},\boldsymbol{y})$. We refer to this approach as {\lrttei} for Tensorized Approximation and we denote the approximation method for the evaluation of the long-range two-electron integrals by LTEI-{\lrttei}. This numerical approximation yields to a tensorized expression of the six-dimensional integrals with $erf$-interaction leading to substantial time complexity reduction to evaluate one integral of the form $\mathcal{B}^{lr(\mu,\nu,\kappa,\lambda)}$. However, these six-dimensional integrals are only one element from the fourth-order tensor $\mathcal{B}^{lr}$. This means that for a basis set consisting of $N_b$ basis functions, there are $\mathcal{O}(N_b^4)$ integrals to evaluate. Therefore, we introduce, using {LTEI-\lrttei} approach, a new  alternative way to approximate these integrals by means of a factorized representation of the fourth-order tensor $\mathcal{B}^{lr}\in$ $\mathbb{R}^{N_{b}\times N_{b}\times N_{b}\times N_{b}}$, leading to an efficient application of the matricization of $\mathcal{B}^{lr}$  to a vector with a significant reduction in time complexity to $\mathcal{O}(\epsilon N^{4/3}), \epsilon \ll  N$ instead of $\mathcal{O}\left(N^{2}\right)$ given a naive computation. These complexities may be further reduced due to properties of Gaussian-type functions. Hence, we propose to express the high dimensional fourth-order tensor $\mathcal{B}^{lr}$ in a more compressed format by using screening  techniques and low-rank approximation methods. Second, we consider Chebyshev interpolation combined with Fast Multipole Method (FMM) \cite{kernel,GREENGARD1987325} leading to linear time complexity  when computing the FMM-accelerated matrix vector product involving the two-electron integrals tensor. This method is referred to as {\fmm}. We provide detailed comparison between the two approaches and discuss to what extent the relative performances  of these methods make them attractive for different application cases. In order to test  the performance of our algorithm, we use the data sets of molecular properties calculated from quantum chemistry for some moderate size molecules. These data sets are extracted from \textit{quantum package} \cite{quan_pack}.

The paper is organized as follows. In Section~\ref{sec::notations} we introduce notations, problem definitions, and properties. In Section~\ref{sec::LRTTEI}, we describe  our new tensorized method to approximate $K(\boldsymbol{x},\boldsymbol{y})$ and we present our LTEI-{\lrttei} scheme for the element-wise evaluation of the two-electron integrals based on the underlying tensorized structure. We describe also using  LTEI-{\lrttei} a factorized expression of the two-electron integrals tensor and we derive  error bounds and theoretical complexities for the approximation process we use. In Section~\ref{subsub::fmm} we demonstrate  that our kernel $K(\boldsymbol{x},\boldsymbol{y})$ is asymptotically smooth, so that we can benefit from fast hierarchical methods (especially Fast Multipole Methods) in order to efficiently evaluate the two-electron integrals decompositions. Hence, we reformulate  these decompositions as $N$-body problems on non-uniform particle distributions. In Section~\ref{sec::application}, we propose an application case in electronic calculations by using the decompositions of the two-electron integrals tensor obtained through the new introduced approaches. In Section~\ref{sub::LR}, further compression techniques are also presented, extending screening approaches and low rank approximation methods to our new decompositions. Finally, results of numerical tests of both methods are  presented as well as a summary of our findings. We use Julia open-source language to test   the new approximation method {\lrttei} and the evaluation scheme LTEI-{\lrttei}\footnote{https://github.com/sbadred/LTEI\_TA.jl.git} and the C++ library $defmm$\footnote{https://github.com/IChollet/defmm}for {\fmm}.

\section{Preliminaries}
\label{sec::notations}
This section introduces our notations  as well as several definitions and properties that will be used in the paper. The matrix operations notations are defined in Section \ref{sec::def}.
\subsection{Notations} 
We use the following notations:
\begin{itemize}
    \item $\mathcal{B} \in \mathbb{R}^{I_{1} \times I_{2} \times I_{3 }  \times I_{4}}$ is a  fourth-order tensor with modes $I_{1}, I_{2}, I_{3}, I_{4}$.
    \item $\mathbf{B}_{(j)} \in \mathbb{R}^{I_{j} \times I_{1} I_{2} \cdots I_{j-1} I_{j+1} \cdots I_{4}}$ is the mode-j matricization of  $\mathcal{B}$ and $\mathbf{B}_{(1,\ldots,j)} \in \mathbb{R}^{I_{1} I_{2} \cdots I_{j} \times I_{j+1} \cdots I_{4}}$ is the mode-$(1,\ldots,j)$ matricization of  $\mathcal{B},$ with $j \in  \left \{ 1,2,3,4 \right \}$ (see \eqref{eq::mode} for more details).
    \item $\text { Scalars are either lowercase letters } x, y, z, \alpha, \beta, \gamma \text { or uppercase Latin letters } N, M, T$. Vectors $\boldsymbol{b}$ are denoted by lowercase boldface letters, matrices $\mathbf{B}$ are denoted by uppercase boldface letters.
    \item  $\boldsymbol{b}_i$ is the  $i-th$ element of the vector $\boldsymbol{b}$, $\mathbf{B}(i_1,i_2)$ is the $(i_1,i_2)$th entry of the matrix $\mathbf{B}$, $\mathcal{B}(i_1,i_2,i_3,i_4)$ is the  $(i_1,i_2,i_3,i_4)$th entry of the tensor $\mathcal{B} \in \mathbb{R}^{I_{1} \times I_{2} \times I_{3 }  \times I_{4}} $.
    \item $\mathbf{B}[:,j]$ (Julia/Matlab notations) denotes the subvector containing the column of $\mathbf{B}$ indexed by $j$, $\mathbf{B}[j,:]$ (Julia notation) denotes the subvector containing the row of $\mathbf{B}$ indexed by $j$, $\mathcal{B}[:,:,j]$  denotes the submatrix extracted from $\mathcal{B}$ at index $j$, $\mathcal{B}[:,:,:,j]$  denotes the subtensor extracted from $\mathcal{B}$ at index $j$.
    \item $\left \|   \boldsymbol{x}-\boldsymbol{y}\right \|=\sqrt{(\boldsymbol{x}_1-\boldsymbol{x}_1)^2+(\boldsymbol{x}_2-\boldsymbol{y}_2)^2+(\boldsymbol{x}_3-\boldsymbol{y}_3)^2}$ is the euclidean distance between two points $\boldsymbol{x}, \boldsymbol{y} \in \mathbb{R}^3$ with coordinates $(\boldsymbol{x}_1,\boldsymbol{x}_2,\boldsymbol{x}_3), (\boldsymbol{y}_1,\boldsymbol{y}_2,\boldsymbol{y}_3)$ respectively.
    \item $|x|$ is the absolute value of $x$.
    \item $ \otimes$ is the kronecker product, $\odot$ is the Hadamard product, $\diamond$ is the row-wise Khatri-rao product, and $*$ is the column-wise Khatri-rao product.
    \item $\left \|f  \right \|_{\infty,\mathbf{S}} :=\sup \{|f(s)|: s \in S\}.$
\end{itemize}
\subsection{Definitions and properties} 
\label{sec::def}
We give in the following several definitions and properties that we use in the subsequent sections.
In the different approximations derived in this paper, the product of two Gaussian type functions is often used. Therefore,  we recall the general product rule between two Gaussian functions.
\begin{prop}[\cite{set}]
\label{def::Gaussrule}
Let $g_1(\boldsymbol{x})=exp\left(-c_1\left |\boldsymbol{x}-\boldsymbol{r}  \right |^2 \right), g_2(\boldsymbol{x})=exp\left(-c_2\left |\boldsymbol{x}-\tilde{\boldsymbol{r}}  \right |^2  \right)$ be Gaussian functions with $\boldsymbol{x},\boldsymbol{r},\tilde{\boldsymbol{r}} \in  \mathbb{R}^3, c_1,c_2 \in  \mathbb{R}$. The product of these functions is
\begin{equation}
\label{eq::productrule}
    g_{12}(\boldsymbol{x})= g_{1}(\boldsymbol{x}) g_{2}(\boldsymbol{x})=exp\left(\frac{-c_1c_2}{c_1+c_2} \left | \boldsymbol{x}-\boldsymbol{r_{12}}\right |^2\right)exp\left(-(c_1+c_2)\left | \boldsymbol{x}-\boldsymbol{r_{12}} \right |^2\right),
\end{equation}
where $\boldsymbol{r_{12}}=\frac{c_1}{c_1+c_2}\boldsymbol{r}+ \frac{c_2}{c_1+c_2}\tilde{\boldsymbol{r}}$.
\end{prop}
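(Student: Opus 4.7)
The plan is to prove the identity by direct computation: combine the two exponents into a single quadratic in $\boldsymbol{x}$ and then complete the square around the weighted center $\boldsymbol{r_{12}}$. Since $g_1(\boldsymbol{x})g_2(\boldsymbol{x}) = \exp\bigl(-c_1|\boldsymbol{x}-\boldsymbol{r}|^2 - c_2|\boldsymbol{x}-\tilde{\boldsymbol{r}}|^2\bigr)$, everything reduces to rewriting the sum of the two quadratic forms as a single shifted quadratic plus a constant that does not depend on $\boldsymbol{x}$.

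First I would expand
\begin{equation*}
c_1|\boldsymbol{x}-\boldsymbol{r}|^2 + c_2|\boldsymbol{x}-\tilde{\boldsymbol{r}}|^2 = (c_1+c_2)|\boldsymbol{x}|^2 - 2\boldsymbol{x}\cdot(c_1\boldsymbol{r}+c_2\tilde{\boldsymbol{r}}) + c_1|\boldsymbol{r}|^2 + c_2|\tilde{\boldsymbol{r}}|^2,
\end{equation*}
and then factor out $c_1+c_2$ from the terms involving $\boldsymbol{x}$, using the definition $\boldsymbol{r_{12}} = \tfrac{c_1\boldsymbol{r}+c_2\tilde{\boldsymbol{r}}}{c_1+c_2}$. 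This yields $(c_1+c_2)|\boldsymbol{x}-\boldsymbol{r_{12}}|^2$ up to the constant correction $-(c_1+c_2)|\boldsymbol{r_{12}}|^2 + c_1|\boldsymbol{r}|^2 + c_2|\tilde{\boldsymbol{r}}|^2$.

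The only slightly fiddly step is simplifying that constant: after substituting the definition of $\boldsymbol{r_{12}}$, the cross terms $c_1^2|\boldsymbol{r}|^2$ and $c_2^2|\tilde{\boldsymbol{r}}|^2$ cancel against the $(c_1+c_2)c_i|\cdot|^2$ contributions, leaving exactly $\tfrac{c_1c_2}{c_1+c_2}\bigl(|\boldsymbol{r}|^2 - 2\boldsymbol{r}\cdot\tilde{\boldsymbol{r}} + |\tilde{\boldsymbol{r}}|^2\bigr) = \tfrac{c_1c_2}{c_1+c_2}|\boldsymbol{r}-\tilde{\boldsymbol{r}}|^2$. Exponentiating, the product rule then reads
\begin{equation*}
g_1(\boldsymbol{x})g_2(\boldsymbol{x}) = \exp\!\Bigl(-\tfrac{c_1c_2}{c_1+c_2}|\boldsymbol{r}-\tilde{\boldsymbol{r}}|^2\Bigr)\,\exp\!\bigl(-(c_1+c_2)|\boldsymbol{x}-\boldsymbol{r_{12}}|^2\bigr).
\end{equation*}
Note that the first factor is an $\boldsymbol{x}$-independent constant depending only on the relative displacement $\boldsymbol{r}-\tilde{\boldsymbol{r}}$; the form in the statement with $|\boldsymbol{x}-\boldsymbol{r_{12}}|^2$ inside the first exponential appears to be a typographical slip, and the argument above shows what the intended identity must be.

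There is no genuine obstacle here — the proof is an elementary completing-the-square calculation — but the bookkeeping for the constant term is the one place where an arithmetic slip would be easy, so I would carry out that step carefully and verify it by checking the two limiting cases $c_2 \to 0$ (which must give $g_1$) and $\boldsymbol{r} = \tilde{\boldsymbol{r}}$ (which must give $\exp(-(c_1+c_2)|\boldsymbol{x}-\boldsymbol{r}|^2)$ with no prefactor).
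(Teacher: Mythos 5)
Your completing-the-square computation is correct, and it is worth noting that the paper itself offers no proof of this proposition: it is stated as a known result imported from the cited reference, so there is nothing internal to compare your argument against. Your derivation of the identity
\begin{equation*}
c_1\left|\boldsymbol{x}-\boldsymbol{r}\right|^2 + c_2\left|\boldsymbol{x}-\tilde{\boldsymbol{r}}\right|^2 = (c_1+c_2)\left|\boldsymbol{x}-\boldsymbol{r_{12}}\right|^2 + \tfrac{c_1c_2}{c_1+c_2}\left|\boldsymbol{r}-\tilde{\boldsymbol{r}}\right|^2
\end{equation*}
is the standard and correct route, and the constant-term bookkeeping checks out. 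You are also right that the statement as printed contains a typographical error: the first exponential should involve $\left|\boldsymbol{r}-\tilde{\boldsymbol{r}}\right|^2$, not $\left|\boldsymbol{x}-\boldsymbol{r_{12}}\right|^2$ (otherwise the right-hand side would not even reproduce the correct total exponent $-(c_1+c_2)|\boldsymbol{x}|^2+\dots$ as a function of $\boldsymbol{x}$). This reading is confirmed by the paper's own later use of the rule in the expression for $g_{\mu\nu}^{(j)}$ in Section~\ref{subsub::adap}, where the prefactor is written as $\exp\bigl(-\tfrac{\mu_{j_1}\nu_{j_2}}{\mu_{j_1}+\nu_{j_2}}(\mathbf{r}_l-\mathbf{r}'_l)^2\bigr)$, i.e.\ it depends only on the separation of the two centers. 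Your two limiting-case sanity checks ($c_2\to 0$ and $\boldsymbol{r}=\tilde{\boldsymbol{r}}$) are a good way to confirm the corrected form.
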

We also have recourse to two-dimensional Chebyshev interpolation. Therefore, we give the expressions of the  Chebyechev polynomials as well as the Chebyshev coeffcients.
\begin{definition}[Two dimensional Chebyshev interpolation \cite{scheiber2015chebyshev,chebfun2}]
\label{def::Chebyspoints}
 For a given continuous function f(x,y) on $[a,b]^2, a,b \in  \mathbb{R}$, the two-dimensional Chebyshev interpolation of this function is given by  its interpolating polynomial that we denote
\begin{equation}
\label{eq::chebpoly}
    \Tilde{f}(x,y)=\sum_{n,m=0}^{N}\alpha_{nm} T_{n}(x) T_{m}(y),
\end{equation}
where $N$ is the number of interpolation nodes, $T_{n}(x)=cos(n~acos(x)), x \in [a,b], n \in  \left \{ 1,\dots,N\right \}$ are the Chebyshev polynomials,
\begin{equation}
\label{eq::chebcoeff}
    \alpha _{nm}=\frac{c_{nm}}{N^2}\sum_{k,k'=1}^{N} f(x_k,y_{k}')T_{n}(x_{k})T_{m}(y_{k}'),\hspace{0.4cm} c_{n,m} = \begin{cases}
    1 & \text{ if }m=n=0\\
    2 & \text{ if }m\neq n=0 \text{ or }n\neq m=0\\
    4 & \text{ if }m\neq 0, n\neq 0\\
    \end{cases}
\end{equation}
are Chebyshev interpolation coefficients. The nodes $x_{k},y_{k}'$ form the Chebyshev two-dimensional grids such as  Chebyshev-Gauss points (first kind)
 \begin{equation}
     x_{k}=\cos \theta_{k}, \hspace{0.4cm}\theta_{k}=\frac{(2 k-1) \pi}{2 N}, \quad k=1, \ldots, N,
 \end{equation}
 or Chebyshev-Lobatto points (second kind)
 \begin{equation}
     x_{k}=\cos \phi_{k}, \hspace{0.4cm}\phi_{k}=\frac{(k-1) \pi}{N-1}, \quad k=1,\ldots, N.
 \end{equation}
\end{definition}
The following proposition gives the interpolation error of the two-dimensional Chebyshev approximation.
\begin{prop}[Interpolation error \cite{err2}]
\label{def::Chebyupperbound}
Let $\Tilde{f}(x,y)$ be an interpolating polynomial of $f(x,y)$ on  $[a,b]^2$ at Chebyshev N interpolation nodes and suppose that  the  partial derivatives  $\partial^{N+1} f(x, y) / \partial x^{N +1}$  and  $\partial^{N +1} f(x, y) / \partial y^{N +1}$  exist and are continuous for all $(\mathrm{x}, \mathrm{y}) \in[a,b]^2$. We have  
\begin{equation}
|f(x, y)-\Tilde{f}(x, y)| \leq \frac{\left (\frac{b-a}{2}  \right )^{N+1}}{2^{N}\left(N+1\right) !}c_1+ \frac{\delta \left (\frac{b-a}{2}  \right )^{N+1}}{2^{N}\left(\mathrm{N}+1\right) !}c_2,
\end{equation}
where 
\begin{eqnarray}
     c_1 =\max _{\xi \in[a,b]}\left|\frac{\partial^{N+1} f(\xi,y)}{\partial \xi^{N+1}}\right|, c_2=\max _{(\xi, \eta) \in[a,b]^2 } \left|\frac{\partial^{\mathrm{N}+1} f(\xi, \eta)}{\partial \eta^{\mathrm{N}+1}}\right|, \text{ and } 
     \delta =\max _{s \in[a,b]}\sum_{i=0}^{N}\left|L_{i,N}(s)\right|.
\end{eqnarray}
The so-called Lebesgue constant $\delta$ grows only logarithmically if Chebyshev interpolation nodes are used, $L_{i,N}(s)$ are Lagrange polynomials of degree $N$.
\end{prop}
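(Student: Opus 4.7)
The plan is to reduce the two-dimensional error bound to the classical univariate Chebyshev interpolation error by means of a tensor-product decomposition.

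First, I would introduce the one-dimensional Chebyshev interpolation operators $I_N^x$ and $I_N^y$ acting, respectively, in the first and second argument of a bivariate function, defined by Lagrange interpolation at the Chebyshev nodes $\{x_k\}_{k=1}^{N}$ and $\{y_k'\}_{k=1}^{N}$ on $[a,b]$. Since the two operators act on disjoint variables they commute, and one checks directly from \eqref{eq::chebpoly}--\eqref{eq::chebcoeff} that $\tilde{f} = I_N^x I_N^y f = I_N^y I_N^x f$. The central step is then the telescoping decomposition
\begin{equation*}
f - \tilde{f} \;=\; (f - I_N^x f) \;+\; I_N^x\bigl(f - I_N^y f\bigr),
\end{equation*}
which splits the bivariate error into a pure $x$-interpolation error and the image under $I_N^x$ of a pure $y$-interpolation error.

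Next, I would invoke the standard univariate Chebyshev interpolation error bound: for any $g \in C^{N+1}([a,b])$ interpolated at the $N$ Chebyshev nodes,
\begin{equation*}
|g(x) - I_N g(x)| \;\leq\; \frac{1}{(N+1)!}\,\|g^{(N+1)}\|_{\infty,[a,b]}\;\max_{x\in[a,b]}\prod_{k=1}^{N}|x-x_k|,
\end{equation*}
together with the extremal property of the Chebyshev nodes which yields $\max_{x\in[a,b]}\prod_{k=1}^{N}|x-x_k| \leq ((b-a)/2)^{N+1}/2^{N}$. Applying this with $y$ fixed to $g(\cdot) = f(\cdot,y)$ bounds the first summand $|f - I_N^x f|$ by $\frac{((b-a)/2)^{N+1}}{2^N (N+1)!}\,c_1$. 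For the second summand, I would first bound $|f - I_N^y f|$ pointwise by $\frac{((b-a)/2)^{N+1}}{2^N (N+1)!}\,c_2$ using the same univariate estimate in the $y$ variable, and then use the fact that $I_N^x$, written in Lagrange form $I_N^x h(x,y) = \sum_{i=0}^{N} L_{i,N}(x)\,h(x_i, y)$, satisfies the operator inequality $\|I_N^x h(\cdot,y)\|_\infty \leq \delta\,\|h(\cdot,y)\|_\infty$ with $\delta = \max_s \sum_i |L_{i,N}(s)|$. This produces the factor $\delta\,c_2$ on the second contribution. Adding the two bounds and applying the triangle inequality yields the stated estimate.

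The main obstacle is the asymmetry of the decomposition: one must apply $I_N^x$ to the residual $f - I_N^y f$ rather than symmetrizing, otherwise a Lebesgue constant would appear on both terms. Once the decomposition is chosen correctly, the remainder of the argument reduces to invoking two standard tools (the univariate Chebyshev error bound and the operator-norm interpretation of the Lebesgue constant) without further subtlety. The continuity of $\partial^{N+1} f/\partial x^{N+1}$ and $\partial^{N+1} f/\partial y^{N+1}$ on the closed square is exactly what is needed for the suprema $c_1$ and $c_2$ to be finite and for the univariate remainder formula to apply fiber-wise.
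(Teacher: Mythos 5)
The paper does not actually prove this proposition: it is imported from the cited reference \cite{err2} and used as a black box, so there is no in-paper argument to compare yours against. Your proof is the standard tensor-product argument and it is sound. The identity $\tilde f = I_N^x I_N^y f$, the telescoping split $f-\tilde f=(f-I_N^x f)+I_N^x(f-I_N^y f)$, the fiber-wise univariate Cauchy remainder combined with the minimax property of the Chebyshev nodal polynomial, and the Lebesgue-constant operator bound $\|I_N^x h(\cdot,y)\|_\infty\le\delta\,\|h(\cdot,y)\|_\infty$ are exactly the ingredients that produce the two displayed terms, with $\delta$ attached only to the $y$-residual, as required. The one detail to tidy up is the node count: the constant $\left(\frac{b-a}{2}\right)^{N+1}/\left(2^N(N+1)!\right)$ and the sum $\sum_{i=0}^{N}|L_{i,N}|$ correspond to interpolation at $N+1$ nodes (a degree-$N$ interpolant), so the nodal product in your univariate bound should run over $N+1$ factors, $\prod_{k=0}^{N}|x-x_k|$, for the monic-Chebyshev estimate to give exactly $\left(\frac{b-a}{2}\right)^{N+1}/2^{N}$; a product of only $N$ factors, as written, would yield the exponent $N$ instead. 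This off-by-one is inherited from the proposition's own wording (``$N$ interpolation nodes'' versus $L_{i,N}$ for $i=0,\dots,N$) and does not affect the structure of your argument. Your fiber-wise application also correctly reflects that $c_1$ retains a dependence on the evaluation point $y$, exactly as in the statement.
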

 The following proposition recalls the upper bound of Gaussian-quadrature rule error.
\begin{prop}[Quadrature error, Section~5.2 \cite{err1}]
\label{def::gauss}
 Let $\left [ a,b \right ]$ be a real closed interval of length $ \left | b-a \right |>0$ and let $f$ $\in$ $\mathbb{C}^{2N_q}(\left [ a,b \right ]), N_q \geq 1$, the integration of $f$ over $\left [ a,b \right ]$ can be given as follows, using  Gaussian quadrature rule
\begin{equation}
    \int_{\left [ a,b \right ]} f(x) d x=\int_{-1}^{1} f\left(\frac{b-a}{2} z+\frac{a+b}{2}\right) \frac{d x}{d z} d z=\frac{b-a}{2}\sum_{i=1}^{N_q} w_{i} f\left(\frac{b-a}{2} z_{i}+\frac{a+b}{2}\right)+R_{N_q},
\end{equation}
where $w_i$ and $x_i$ are the weights and nodes of the quadrature rule, $N_q$ is the number of quadrature points and $R_{N_q}$ refers to the Gaussian quadrature error. This last quantity verifies
\begin{equation}
    |R_{N_q}|  \leq \frac{\left | b-a \right |^{2 N_q+1}(N_q !)^{4}}{(2 N_q+1)[(2 N_q) !]^{3}}\left \|  \frac{d^{2N_q}}{d s^{2N_q}}f(s) \right \|_{\infty,\left [ a,b \right ]}.
\end{equation}
\end{prop}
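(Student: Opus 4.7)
The plan is to reduce the estimate to the canonical interval $[-1,1]$, apply the standard error analysis of Gauss--Legendre quadrature via Hermite interpolation, and then reverse the change of variables to recover the $[a,b]$ bound. Throughout, I will rely only on classical facts about Legendre polynomials and the integral mean value theorem.

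First I would set $\phi(z)=\frac{b-a}{2}z+\frac{a+b}{2}$ and $g(z)=f(\phi(z))$, so that $\int_a^b f(x)\,dx=\frac{b-a}{2}\int_{-1}^{1}g(z)\,dz$ and the formula in the statement is exactly the $N_q$-point Gauss--Legendre rule applied to $g$. The nodes $z_i$ are the roots of the Legendre polynomial $P_{N_q}$ and the weights $w_i$ make the rule exact on all polynomials of degree $\leq 2N_q-1$. Since $f\in C^{2N_q}([a,b])$, we also have $g\in C^{2N_q}([-1,1])$, which is exactly the regularity needed below.

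Next I would introduce the Hermite interpolant $H$ of degree $\leq 2N_q-1$ satisfying $H(z_i)=g(z_i)$ and $H'(z_i)=g'(z_i)$ for $i=1,\ldots,N_q$. Exactness of the quadrature on $H$, combined with $H(z_i)=g(z_i)$, gives $\sum_i w_i g(z_i)=\int_{-1}^{1}H(z)\,dz$, so the quadrature error is $\int_{-1}^{1}[g(z)-H(z)]\,dz$. The classical Hermite remainder asserts $g(z)-H(z)=\frac{g^{(2N_q)}(\xi(z))}{(2N_q)!}\prod_{i=1}^{N_q}(z-z_i)^2$ for some $\xi(z)\in[-1,1]$. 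Because the weight $\prod_i(z-z_i)^2$ is non-negative and continuous, the integral mean value theorem produces $\xi^\star\in[-1,1]$ with
\begin{equation*}
\int_{-1}^{1}[g(z)-H(z)]\,dz=\frac{g^{(2N_q)}(\xi^\star)}{(2N_q)!}\int_{-1}^{1}\prod_{i=1}^{N_q}(z-z_i)^2\,dz.
\end{equation*}

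To finish I would evaluate the remaining integral in closed form using two standard Legendre identities: since $\prod_i(z-z_i)$ is the monic Legendre polynomial, $\prod_i(z-z_i)=P_{N_q}(z)/k_{N_q}$ with leading coefficient $k_{N_q}=\frac{(2N_q)!}{2^{N_q}(N_q!)^2}$, and $\int_{-1}^{1}P_{N_q}(z)^2\,dz=\frac{2}{2N_q+1}$. This yields $\int_{-1}^{1}\prod_i(z-z_i)^2\,dz=\frac{2^{2N_q+1}(N_q!)^4}{(2N_q+1)[(2N_q)!]^2}$. Substituting, using the chain rule $g^{(2N_q)}(z)=\bigl(\tfrac{b-a}{2}\bigr)^{2N_q}f^{(2N_q)}(\phi(z))$, multiplying by the global factor $\frac{b-a}{2}$ and passing to absolute values collapses all the powers of $2$ and delivers exactly the advertised bound. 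The only genuinely delicate point is the mean value argument, which needs the non-negativity of $\prod_i(z-z_i)^2$ and the continuity of $z\mapsto g^{(2N_q)}(\xi(z))$ (or, alternatively, a direct sign-definite remainder argument avoiding the pointwise $\xi(z)$); the rest is bookkeeping of Legendre constants.
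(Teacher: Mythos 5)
The paper does not prove this proposition: it is recalled verbatim as a classical result with a citation to the reference given in the statement, so there is no in-paper argument to compare against. Your derivation is the standard textbook proof (Hermite interpolation at the Gauss nodes, exactness on degree $\leq 2N_q-1$, the sign-definite remainder $\prod_i(z-z_i)^2$, and the Legendre normalization constants $k_{N_q}=\frac{(2N_q)!}{2^{N_q}(N_q!)^2}$ and $\int_{-1}^{1}P_{N_q}^2=\frac{2}{2N_q+1}$), and the bookkeeping is right: the factor $\frac{b-a}{2}\cdot\bigl(\frac{b-a}{2}\bigr)^{2N_q}$ from the affine change of variables cancels the $2^{2N_q+1}$ and yields exactly the stated bound. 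One small simplification worth noting: since the proposition only asserts an inequality against $\bigl\| f^{(2N_q)}\bigr\|_{\infty,[a,b]}$, you can skip the integral mean value theorem entirely and just bound $|g^{(2N_q)}(\xi(z))|$ by its supremum inside the integral, which sidesteps the one point you flag as delicate.
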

We recall now several matrix products that are used in this paper. The \textit{Hadamard product} between matrices $\mathbf{A}$ and $\mathbf{B} \in \mathbb{R}^{I \times J}$ is $\mathbf{A} \odot \mathbf{B} \in \mathbb{R}^{I \times J}$ defined as
\begin{equation}
     \mathbf{A} \odot \mathbf{B}=\left[\begin{array}{cccc}
a_{11} b_{11} & a_{12} b_{12} & \cdots & a_{1 J} b_{1 J} \\
a_{21} b_{21} & a_{22} b_{22} & \cdots & a_{2 J} b_{2 J} \\
\vdots & \vdots & \ddots & \vdots \\
a_{I 1} b_{I 1} & a_{I  2} b_{I  2} & \cdots & a_{I  J } b_{I  J }
\end{array}\right].
\end{equation}
The \textit{Kronecker product} of matrices $\mathbf{A} \in \mathbb{R}^{I_1 \times J_1}$ and $\mathbf{B}  \in \mathbb{R}^{I_2 \times J_2}$ is $\mathbf{A} \otimes \mathbf{B} \in \mathbb{R}^{I_1I_2\times J_1J_2}$ defined as 
\begin{equation}
   \mathbf{A} \otimes \mathbf{B}=\left[\begin{array}{cccc}
a_{11} \mathbf{B} & a_{12} \mathbf{B} & \cdots & a_{1 J_1} \mathbf{B} \\
a_{21} \mathbf{B} & a_{22} \mathbf{B} & \cdots & a_{2 J_1} \mathbf{B} \\
\vdots & \vdots & \ddots & \vdots \\
a_{I_1 1} \mathbf{B} & a_{I_1 2} \mathbf{B} & \cdots & a_{I_1 J_1} \mathbf{B}
\end{array}\right]
\end{equation}
We also use the compact product notation $\otimes_{k=1}^{d} $. Given $d$ matrices $\mathbf{A}_k  \in \mathbb{R}^{I_k\times J_k}, k \in  \left \{ 1,\ldots,d \right \}$, we have
\begin{equation}
\otimes_{k=1}^{d}\mathbf{A}_k=\mathbf{A}_1 \otimes \mathbf{A}_2 \otimes \ldots \otimes \mathbf{A}_d \in \mathbb{R}^{\prod_{k=1}^d I_k\times \prod_{k=1}^d J_k}.
\end{equation}
Consider two matrices $\mathbf{A}=\begin{bmatrix}
\mathbf{A}[1,:]^\top&
\mathbf{A}[2,:]^\top &
\hdots &
\mathbf{A}[I_1,:]^\top
\end{bmatrix}^\top \in \mathbb{R}^{I_1 \times J_1}$
and $\mathbf{B}=\begin{bmatrix}
\mathbf{B}[1,:]^\top&
\mathbf{B}[2,:]^\top& 
\hdots &
\mathbf{B}[I_1,:]^\top
\end{bmatrix}^\top  \in \mathbb{R}^{I_1\times J_2}$, where $ \mathbf{A}[k,:]  \in \mathbb{R}^{1 \times J_1}$ and $ \mathbf{B}[k,:]  \in \mathbb{R}^{1 \times J_2}$ for $k \in \left \{ 1,...,I_1 \right \}$ . The row-wise Khatri-Rao product $\mathbf{A} \diamond \mathbf{B}$ is a matrix of dimension $I_1 \times (J_1J_2)$ defined as
\begin{equation}
\mathbf{A} \diamond \mathbf{B}=\begin{bmatrix}
\mathbf{A}[1,:]^\top \otimes \mathbf{B}[1,:]^\top &
\mathbf{A}[2,:]^\top  \otimes \mathbf{B}[2,:]^\top&
\hdots &
\mathbf{A}[I_1,:]^\top  \otimes \mathbf{B}[I_1,:]^\top
\end{bmatrix}^\top.
\end{equation}
Given $d$ matrices $\mathbf{A}_k   \in \mathbb{R}^{I_1\times J_k}, k \in  \left \{ 1,\ldots,d \right \}$, we use the notation
\begin{equation}
    \diamond_{k=1}^{d}\mathbf{A}_k=\mathbf{A}_1 \diamond \mathbf{A}_2 \diamond\ldots \diamond \mathbf{A}_d \in \mathbb{R}^{I_1 \times \prod_{k=1}^d J_k}.
\end{equation}
Consider two matrices $\mathbf{A}=\begin{bmatrix}
\mathbf{A}[:,1]& 
\mathbf{A}[:,2]& 
\ldots &
\mathbf{A}[:,J_1]
\end{bmatrix} \in \mathbb{R}^{I_1\times J_1}$
and $\mathbf{B}=\begin{bmatrix}
\mathbf{B}[:,1]&
\mathbf{B}[:,2]&
\ldots &
\mathbf{B}[:,J_1]
\end{bmatrix} \in \mathbb{R}^{I_2\times J_1}$, where $ \mathbf{A}[:,k] \in \mathbb{R}^{I_1 \times 1}$ and $ \mathbf{B}[:,k]  \in \mathbb{R}^{I_2 \times 1}$ for $k \in \left \{ 1,...,J_1 \right \}$ . The column-wise Khatri-Rao product $\mathbf{A} * \mathbf{B}$ is a matrix of dimension $(I_1I_2) \times J_1$ defined as
\begin{equation}
    \mathbf{A} * \mathbf{B}=\left[\mathbf{A}[:,1] \otimes \mathbf{B}[:,1] \hspace{0.2cm} \mathbf{A}[:,2] \otimes \mathbf{B}[:,2] \quad \ldots \quad \mathbf{A}[:,J_1] \otimes \mathbf{B}[:,J_1]\right],
\end{equation}
where $\mathbf{A}[:,k]\otimes\mathbf{B}[:,k] $ for $k \in \left \{ 1,...,J_1 \right \}$ defines the Kronecker product between vectors $\mathbf{A}[:,k]$ and $\mathbf{B}[:,k]$. That is, each column of $\mathbf{A} * \mathbf{B}$ is the Kronecker product between the respective columns of the two input matrices $\mathbf{A}$ and $\mathbf{B}$. The relation between column-wise and row-wise Khatri-Rao product is the following
\begin{equation}
\label{eq::relkhatr}
    (\mathbf{A} * \mathbf{B})^\top=\mathbf{A}^\top \diamond \mathbf{B}^\top.
\end{equation}
We give several useful relations among these matrix products that we use in our derivations.
\begin{prop}[\cite{kroni}]
\label{prop::relations}
Consider matrices $\mathbf{A}  \in \mathbb{R}^{I_1\times J_1}$  , $\mathbf{B} \in \mathbb{R}^{I_1 \times J_2}$ , $\mathbf{C}  \in \mathbb{R}^{J_1 \times J_3}$, and $\mathbf{D}  \in \mathbb{R}^{J_2 \times J_4}$, then 
\begin{equation}
    (\mathbf{A} \diamond \mathbf{B})(\mathbf{C} \otimes \mathbf{D})=(\mathbf{A} \mathbf{C}) \diamond(\mathbf{B} \mathbf{D}).
\end{equation}
Consider matrices $\mathbf{A}   \in \mathbb{R}^{I_1 \times J_1}$ and $\mathbf{B}   \in \mathbb{R}^{I_1 \times J_2}$ and $\mathbf{C}  \in \mathbb{R}^{J_1 \times J_3}$, and $\mathbf{D}  \in \mathbb{R}^{J_2 \times J_3}$, then
\begin{equation}
    (\mathbf{A} \diamond \mathbf{B})(\mathbf{C} * \mathbf{D})=(\mathbf{A} \mathbf{C}) \odot(\mathbf{B D}).
\end{equation}
\end{prop}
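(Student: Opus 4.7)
The plan is to prove both identities row-by-row (for the first) and entry-by-entry (for the second), reducing each to a single application of the Kronecker mixed-product identity $(\mathbf{X}\otimes\mathbf{Y})(\mathbf{Z}\otimes\mathbf{W}) = (\mathbf{X}\mathbf{Z})\otimes(\mathbf{Y}\mathbf{W})$, which holds whenever the dimensions of $\mathbf{X},\mathbf{Z}$ and of $\mathbf{Y},\mathbf{W}$ are compatible for ordinary multiplication. The first step in both cases is to rewrite the Khatri--Rao products in the block form given in the excerpt, so that the left-hand sides become products of simpler objects whose rows or columns are honest Kronecker products.

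For the first identity, I would fix an index $k \in \{1,\dots,I_1\}$ and compute the $k$-th row of the left-hand side. By definition of $\diamond$, the $k$-th row of $\mathbf{A}\diamond\mathbf{B}$ is $\mathbf{A}[k,:]\otimes\mathbf{B}[k,:]$, so
\begin{equation*}
\bigl((\mathbf{A}\diamond\mathbf{B})(\mathbf{C}\otimes\mathbf{D})\bigr)[k,:]
= (\mathbf{A}[k,:]\otimes\mathbf{B}[k,:])(\mathbf{C}\otimes\mathbf{D}).
\end{equation*}
The mixed-product identity then gives $(\mathbf{A}[k,:]\mathbf{C})\otimes(\mathbf{B}[k,:]\mathbf{D}) = (\mathbf{AC})[k,:]\otimes(\mathbf{BD})[k,:]$, which is precisely the $k$-th row of $(\mathbf{AC})\diamond(\mathbf{BD})$. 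Since this holds for every $k$, the two matrices coincide.

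For the second identity, I would work at the level of a single entry $(k,j)$. The $j$-th column of $\mathbf{C}*\mathbf{D}$ is $\mathbf{C}[:,j]\otimes\mathbf{D}[:,j]$, so the $(k,j)$ entry of the left-hand side equals $(\mathbf{A}[k,:]\otimes\mathbf{B}[k,:])(\mathbf{C}[:,j]\otimes\mathbf{D}[:,j])$. Applying the mixed-product identity again yields $(\mathbf{A}[k,:]\mathbf{C}[:,j])\otimes(\mathbf{B}[k,:]\mathbf{D}[:,j])$; since both factors are scalars, the Kronecker product collapses to ordinary multiplication $(\mathbf{AC})[k,j]\cdot(\mathbf{BD})[k,j]$, which is the $(k,j)$ entry of $(\mathbf{AC})\odot(\mathbf{BD})$.

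There is no real obstacle here: the only subtlety is bookkeeping the shapes to ensure that the Kronecker products inside the parentheses have the correct sizes so that the mixed-product identity applies, which is guaranteed by the hypotheses $\mathbf{A}\in\mathbb{R}^{I_1\times J_1}$, $\mathbf{C}\in\mathbb{R}^{J_1\times J_3}$ (and analogously for $\mathbf{B},\mathbf{D}$). Alternatively, one could invoke relation \eqref{eq::relkhatr} to derive the second identity from the first, but the direct entrywise computation above is cleaner and avoids transposing both sides.
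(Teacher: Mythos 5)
Your proof is correct. Note that the paper itself states this proposition with a citation to the literature and gives no proof, so there is nothing internal to compare against; your reduction of the first identity row-by-row and the second entry-by-entry to the Kronecker mixed-product rule $(\mathbf{X}\otimes\mathbf{Y})(\mathbf{Z}\otimes\mathbf{W})=(\mathbf{X}\mathbf{Z})\otimes(\mathbf{Y}\mathbf{W})$ is the standard argument, the dimension bookkeeping checks out, and the observation that the Kronecker product of two scalars collapses to an ordinary product correctly yields the Hadamard product on the right-hand side of the second identity.
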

In this paper, we use the concept of matricization, also called tensor unfolding \cite{doi:10.1137/07070111X}. The mode-$j$ matricization of a tensor  $\mathcal{B} \in \mathbb{R}^{I_{1} \times I_{2} \times \cdots \times I_{d}}, d \in  \mathbb{N}$, referred to as $\mathbf{B}_{(j)} \in \mathbb{R}^{I_{j} \times I_{1} I_{2} \cdots I_{j-1} I_{j+1} \cdots I_{d}}, j \in \left \{ 1,\ldots,d \right \}$, can be defined by the following mapping
\begin{equation}
\label{eq::mode}
  \mathcal{B}(i_1,i_2,\cdots,i_d) = \mathbf{B}(i_{j}, i_1i_2 \ldots i_{j-1}i_{j+1} \ldots i_d)=\mathbf{B}(i_{j},\bar{i}),
\end{equation}
$\text{ with } \bar{i}=1+\sum_{\substack{k=1 \\  k \neq j}}^{d}\left(\left(i_{k}-1\right) \prod_{\substack{m=1 \\ m \neq j}}^{k-1} I_{m}\right).$ For example, if $d=4$, the mode-$1$ matricization of $\mathcal{B} \in \mathbb{R}^{I_{1} \times I_{2} \times I_{3} \times I_{4}}$ which is denoted by $\mathbf{B}_{(1)} \in \mathbb{R}^{I_{1} \times I_{2} I_{3} I_{4}}$ can be defined by the following mapping
\begin{equation}
    \mathcal{B}(i_1,i_2,i_3,i_4) = \mathbf{B}(i_1, i_2i_3i_4)=\mathbf{B}(i_{1},\bar{i}),
\end{equation}
with $\bar{i}=1+(i_2-1)I_1+(i_3-1)I_2+(i_4-1)I_2I_3.$ The mode-$(1,2)$ matricization of $\mathcal{B} \in \mathbb{R}^{I_{1} \times I_{2} \times I_{3} \times I_{4}}$ which is denoted by $\mathbf{B}_{(1,2)} \in \mathbb{R}^{I_{1} I_{2} \times  I_{3} I_{4}}$ can be denoted entry-wise as follows
\begin{equation}
    \mathcal{B}(i_1,i_2,i_3,i_4) = \mathbf{B}(i_1 i_2,i_3i_4).
\end{equation}

\section{Long-range TEI tensor factorization through  Tensorized Approximation (LTEI-\lrttei)}
\label{sec::LRTTEI}
In this section we introduce a new numerical method that allows to evaluate efficiently the two-electron integrals through the factorization of the long-range Coulomb potential.
This method, that we refer to as {\lrttei}, factorizes the fourth order long-range two-electron integrals tensor $\mathcal{B}^{lr}$ through the approximation of the long-range kernel  $K(\boldsymbol{x},\boldsymbol{y})$ with two-dimensional Chebyshev interpolation and Gaussian quadrature.  Error bounds for the numerical approximation of the long-range two-electron integrals are also provided.
approximated six-dimensional integral.
\subsection{The element-wise evaluation of the TEI tensor}
\label{subsubsection:ewf}
We first describe the efficient evaluation of the six-dimensional integrals $\mathcal{B}^{lr}(\mu,\nu,\kappa,\lambda)$ defined in \eqref{long-range}. We start by presenting our approach for computing the long-range $K(\boldsymbol{x},\boldsymbol{y})$ defined as
\begin{equation}
\label{eq::kernel1}
    K(\boldsymbol{x},\boldsymbol{y})=\frac{erf(\omega  \left \| \boldsymbol{x}-\boldsymbol{y}    \right \| )}{  \left \| \boldsymbol{x}-\boldsymbol{y}    \right \|}=\frac{2}{\sqrt \pi}\frac{\int_{\left [0, \omega \left \| \boldsymbol{x}-\boldsymbol{y}    \right \| \right ]} exp\left (-t^{2}  \right )dt}{  \left \| \boldsymbol{x}-\boldsymbol{y}    \right \|},\boldsymbol{x},\boldsymbol{y} \in \mathbb{R}^{3}.
\end{equation}
Let $t=s \left \| \boldsymbol{x}-\boldsymbol{y}    \right \|$. With this change of variable, we obtain
\begin{equation}
\label{eq::kernel}
    K(\boldsymbol{x},\boldsymbol{y})=\frac{2}{\sqrt \pi}\int_{ \left [0,\omega  \right ]}exp\left (-s^{2} \left \| \boldsymbol{x}-\boldsymbol{y}    \right \|^{2}\right )ds,\boldsymbol{x},\boldsymbol{y} \in \mathbb{R}^{3}.
\end{equation}
Using the Gaussian quadrature rule (see Proposition~\ref{def::gauss}), we  can evaluate numerically the integral in \eqref{eq::kernel} as
\begin{small}
\begin{equation}\label{TEI2}
 \int_{[0,\omega]} exp\left (-s^{2} \left \| \boldsymbol{x}-\boldsymbol{y}    \right \|^{2}\right )ds=\frac{\omega}{2}\int_{\left [-1,1  \right ]}exp\left (-(\frac{\omega}{2}+\frac{\omega}{2}z)^{2} \left \| \boldsymbol{x}-\boldsymbol{y}    \right \|^{2}  \right )dz \approx  \frac{\omega}{2} \sum_{i=1}^{N_{q_1}} w_{i} exp\left (-(\frac{\omega}{2}+\frac{\omega}{2}z_{i})^{2} \left \| \boldsymbol{x}-\boldsymbol{y}    \right \|^{2}  \right ),
\end{equation}
\end{small}
where $w_{i}$ are the Gaussian quadrature weights, $z_{i}$ are the Gaussian quadrature nodes, and $N_{q_1}$ is the number of quadrature points. The coordinates of $\boldsymbol{x}$ and $\boldsymbol{y}$ are denoted by  $(\boldsymbol{x}_1,\boldsymbol{x}_2,\boldsymbol{x}_3), (\boldsymbol{y}_1,\boldsymbol{y}_2,\boldsymbol{y}_3)$ respectively. The exponential term in \eqref{TEI2} can be written as 
\begin{equation}
    exp\left (-(\frac{\omega}{2}+\frac{\omega}{2}z_{i})^{2} \left \| \boldsymbol{x}-\boldsymbol{y}    \right \|^{2}  \right )=\prod_{l=1}^{3}exp\left (-(\frac{\omega}{2}+\frac{\omega}{2}z_{i})^{2}(\boldsymbol{x}_l-\boldsymbol{y}_l)^{2} \right ), l \in \left \{ 1,2,3 \right \}.
\end{equation}
Given the truncated computational box $[-b,b]^3, b \in \mathbb{R}$, each function of the form $exp\left (-(\frac{\omega}{2}+\frac{\omega}{2}z_{i})^{2}(\boldsymbol{x}_l-\boldsymbol{y}_l)^{2}  \right ), i \in \left \{ 1,\ldots,N_{q_1} \right \},l \in \left \{ 1,2,3 \right \}$ is  smooth, differentiable (hence continuous) on $[-b,b]^2$, so that it is  an excellent candidate for two-dimensional Chebyshev interpolation. According to Definition~\ref{def::Chebyspoints}, the interpolated function can be written as 
\begin{equation}\label{chebcheb}
     exp\left (-(\frac{\omega}{2}+\frac{\omega}{2}z_{i})^{2}(\boldsymbol{x}_l-\boldsymbol{y}_l)^{2}  \right ) \approx  \sum_{n_l,m_l=1}^{N_i} \alpha^{(i)}_{n_lm_l} T^{(i)}_{n_l}(\boldsymbol{x}_l)T^{(i)}_{m_l}(\boldsymbol{y}_l),
\end{equation}
where $N_i$ is the number of interpolation nodes for $i \in \left \{ 1,\cdots,N_{q_1} \right \}$, $\boldsymbol{x}_l,~\boldsymbol{y}_l \in [-b,b]$, and $l \in \left \{ 1,2,3 \right \}.$
We recall that among the advantages of using two-dimensional Chebyshev interpolation method is that forming two-dimensional Chebyshev grids $N_i \times N_i$ for each function \eqref{chebcheb} takes $\mathcal{O}(N_i^2)$ storage complexity, where $N_i$ is the number of interpolation points needed. 
Furthermore, Chebyshev-Lobatto nodes can be obtained in linearithmic time using Fast Fourier Transform (FFT) \cite{Platte2010}. This is one of the reasons for which we use Chebyshev basis.
Our implementation that we discuss in more details in Section~\ref{sec::numerical} uses FFTW \cite{FFTW} routine in Julia  and the \textit{chebfun2} library \cite{chebfun2} to find the number of interpolation points $N_{i}$ of the functions in \eqref{chebcheb}. By replacing \eqref{chebcheb} and \eqref{TEI2} in \eqref{eq::kernel}, the numerical approximation of the kernel $K(\boldsymbol{x},\boldsymbol{y})$ becomes
\begin{equation}
\label{eq::Approx_kernel}
    K(\boldsymbol{x},\boldsymbol{y}) \approx \frac{\omega}{\sqrt \pi}  \sum_{i=1}^{N_{q_1}} w_{i} \left (\sum_{  n_1,m_1,\cdots,n_3,m_3=1 }^{N_i} \prod_{l=1}^{3} \alpha^{(i)}_{n_lm_l} T^{(i)}_{n_l}(\boldsymbol{x}_l)T^{(i)}_{m_l}(\boldsymbol{y}_l)    \right ),
\end{equation}
where  $\omega \geqslant 0$ is the parameter that regulates the separation range of the long-range/short-range interactions, $\alpha^{(i)}_{n_lm_l}$ are the $N_i$ Chebyshev  nodes, $T^{(i)}_{n_l}(\boldsymbol{x}_l), T^{(i)}_{m_l}(\boldsymbol{y}_l) $ are the Chebyshev polynomials (see Definition~\ref{def::Chebyspoints}) and $w_{i}$ are the Gaussian quadrature weights with $i \in  \left \{ 1, \cdots, N_{q_1} \right \}$. All along this paper,  we denote  $N$   the maximum number of interpolation points in the tensorized Chebyshev grid in all directions such that $N=(max(N_i)_{i \in \left \{ 1,\dots,N_{q_1} \right \}})^3$. The precomputation cost here to approximate the kernel \eqref{eq::Approx_kernel} is $\mathcal{O}(N_{q_1}N^{\frac{1}{3}}(log(N^{\frac{1}{3}})+N^{\frac{1}{3}}))$ : $\mathcal{O}(N_{q_1}N^{\frac{1}{3}}log(N^{\frac{1}{3}}))$ FLOPS for the evaluation of the Chebyshev  coefficient matrices using FFT algorithm, linearithmic in the number of interpolation points  \textit{in a single direction} $N^{\frac{1}{3}}$ and linear in the number of quadrature points,  and $\mathcal{O}(N_{q_1}N^{\frac{2}{3}})$ FLOPS for forming the Chebyshev two-dimensional grids.

We consider now the  finite six-dimensional integral $\mathcal{B}^{lr}(\mu,\nu,\kappa,\lambda)$ defined in \eqref{eq::compact} on the same truncated computational box $[-b,b]^3 \times [-b,b]^3, b \in \mathbb{R}$  with ~$\mu,\nu,\kappa,\lambda \in \left \{ 1,..,N_{b} \right \}$,  where $N_b$ is  the number of basis functions that we defined in \eqref{basis} and $b$ is the size of the computational box  that is chosen according to the most slowly decaying   basis functions.  We discuss this aspect in more details in Section~\ref{subsub::adap}. 
By replacing   $K(\boldsymbol{x},\boldsymbol{y})$ with its approximation from \eqref{eq::Approx_kernel},  the numerical  approximation of $\mathcal{B}^{lr}(\mu,\nu,\kappa,\lambda)$, denoted by $\mathcal{B}_{LTEI-{\lrttei}}^{lr}(\mu,\nu,\kappa,\lambda)$, writes

\begin{small}
    \begin{align}
    \label{eq::approx3}
    \mathcal{B}_{LTEI-\lrttei}^{lr}(\mu,\nu,\kappa,\lambda)=\frac{\omega}{\sqrt \pi}  \sum_{i=1}^{N_{q_1}} w_{i}\left (\int_{[-b,b]^{3}}\int_{[-b,b]^{3}} g_{\mu}(\boldsymbol{x})g_{\nu}(\boldsymbol{x})g_{\kappa}(\boldsymbol{y})g_{\lambda}(\boldsymbol{y})\left (\sum_{  n_1,m_1,\cdots,n_3,m_3=1 }^{N_i} \prod_{l=1}^{3}\alpha^{(i)}_{n_lm_l}T^{(i)}_{n_l}(\boldsymbol{x}_l)T^{(i)}_{m_l}(\boldsymbol{y}_l)  \right )d\boldsymbol{x}d\boldsymbol{y}  \right ).
\end{align}
\end{small}

To obtain an efficient factorized representation of $\mathcal{B}_{LTEI-{\lrttei}}^{lr}(\mu,\nu,\kappa,\lambda)$, we further consider the separability of the Gaussian primitives. Let  $g_{\mu\nu}(\boldsymbol{x})=g_{\mu}(\boldsymbol{x})g_{\nu}(\boldsymbol{x})$, $g_{\kappa\lambda}(\boldsymbol{y})=g_{\kappa}(\boldsymbol{y})g_{\lambda}(\boldsymbol{y})$ such that according to \eqref{basis} we have (showing only $ g_{\mu\nu}$ expression)
\begin{eqnarray}\label{eq::gaussians1}
    g_{\mu\nu}(\boldsymbol{x})=g_{\mu}(\boldsymbol{x})g_{\nu}(\boldsymbol{x})&
  =&\sum_{j_1=1}^{I_\mu}\sum_{j_2=1}^{I_\nu}c_{j_1}c_{j_2}\prod_{l=1}^{3}g_{\mu}^{(j_1)}(\boldsymbol{x}_l)g_{\nu}^{(j_2)}(\boldsymbol{x}_l)
  \label{eq::gaussians2}=
\sum_{j=1}^{I_{\mu\nu}}c_j\prod_{l=1}^{3}g_{\mu\nu}^{(j)}(\boldsymbol{x}_l),
\end{eqnarray}
where $I_{\mu\nu}=I_{\mu}I_{\nu},c_j=c_{j_1}c_{j_2}, g_{\mu\nu}^{(j)}(\boldsymbol{x}_l)=g_{\mu}^{(j_1)}(\boldsymbol{x}_l)g_{\nu}^{(j_2)}(\boldsymbol{x}_l)$. Expressing  the three dimensional function $g_{\mu\nu}(\boldsymbol{x})$ as a sum of separable functions is important to reduce the evaluation cost of  $\mathcal{B}_{LTEI-\lrttei}^{lr}(\mu,\nu,\kappa,\lambda)$ such that  after replacing the Gaussian basis functions  in \eqref{eq::approx3} by their separable expression \eqref{eq::gaussians2} we obtain 
\begin{small}
\begin{align}
    &\mathcal{B}_{LTEI-\lrttei}^{lr}(\mu,\nu,\kappa,\lambda)=\frac{\omega}{\sqrt \pi}  \sum_{i=1}^{N_{q_1}} w_{i}\left (\int_{[-b,b]^{3}}\int_{[-b,b]^{3}} g_{\mu\nu}(\boldsymbol{x})g_{\kappa\lambda}(\boldsymbol{y})\left (\sum_{  n_1,m_1,\cdots,n_3,m_3=1 }^{N_i} \prod_{l=1}^{3}\alpha^{(i)}_{n_lm_l}T^{(i)}_{n_l}(\boldsymbol{x}_l)T^{(i)}_{m_l}(\boldsymbol{y}_l)  \right )d\boldsymbol{x}d\boldsymbol{y}  \right )\\&=
    \frac{\omega}{\sqrt \pi}  \sum_{i=1}^{N_{q_1}} w_{i}\left (\int_{[-b,b]^{3}}\int_{[-b,b]^{3}} \left (\sum_{  j=1 }^{I_{\mu\nu}}\sum_{j'=1 }^{I_{\kappa\lambda}}c_jc_{j'}\prod_{l=1}^{3}g_{\mu\nu}^{(j)}(\boldsymbol{x}_l) g_{\kappa\lambda}^{(j')}(\boldsymbol{y}_l) \right )
\left (\sum_{  n_1,m_1,\cdots,n_3,m_3=1 }^{N_i}\prod_{l=1}^{3}\alpha^{(i)}_{n_lm_l}T^{(i)}_{n_l}(\boldsymbol{x}_l)T^{(i)}_{m_l}(\boldsymbol{y}_l)  \right )d\boldsymbol{x}d\boldsymbol{y}  \right )
    \\ \label{eq::approx4} &=\frac{\omega}{\sqrt \pi} \sum_{i=1}^{N_{q_1}} w_{i} \sum_{  j=1 }^{I_{\mu\nu}}\sum_{j'=1 }^{I_{\kappa\lambda}}c_jc_{j'}\underbrace{ \sum_{  \substack{n_1,n_2,n_3 \\ m_1,m_2,m_3=1} }^{N_i} \prod_{l=1}^{3}  \left (\alpha^{(i)}_{n_lm_l} \int_{[-b,b]}g_{\mu\nu}^{(j)}(\boldsymbol{x}_{l})T^{(i)}_{n_l}(\boldsymbol{x}_{l})d\boldsymbol{x}_{l}\int_{[-b,b]}g_{\kappa\lambda}^{(j')}(\boldsymbol{y}_{l})T^{(i)}_{m_l}(\boldsymbol{y}_l)d\boldsymbol{y}_{l} \right )}_{\approx \mathbf{F}^{(i)}(j,j')}.
\end{align}
\end{small}
We note that the expression of $\mathcal{B}_{LTEI-\lrttei}^{lr}(\mu,\nu,\kappa,\lambda)$ in \eqref{eq::approx4} involves the numerical evaluation of one dimensional integrals. We associate each such integral with the element of a matrix and obtain two matrices $\mathbf{W}^{(i,l)}_{\mu\nu} \in \mathbb{R}^{I_{\mu\nu} \times N_i}$ and $\mathbf{W}^{(i,l)}_{\kappa\lambda} \in \mathbb{R}^{I_{\kappa\lambda} \times N_i}$ defined entry-wise as
 \begin{equation}
\label{eq::defV}
    \mathbf{W}^{(i,l)}_{\mu\nu}(j,n_l)=\int_{[-b,b]}g_{\mu\nu}^{(j)}(\boldsymbol{x}_{l})T^{(i)}_{n_l}(\boldsymbol{x}_{l})d\boldsymbol{x}_{l} \text{  and  }  \mathbf{W}^{(i,l)}_{\kappa\lambda}(j',m_l)=\int_{[-b,b]}g_{\kappa\lambda}^{(j')}(\boldsymbol{y}_{l})T^{(i)}_{m_l}(\boldsymbol{y}_{l})d\boldsymbol{y}_{l}.
\end{equation}
We use one-dimensional Gaussian quadrature rule for the evaluation of \eqref{eq::defV}. Their approximation is denoted by $\tilde{\mathbf{W}}^{(i,l)}_{\mu\nu}(j,n_l) (resp. \tilde{ \mathbf{W}}^{(i,l)}_{\kappa\lambda}(j',m_l))$.  We further define matrices $\mathbf{F}^{(i)}, i \in  \left \{ 1,\cdots,N_{q_1} \right \}$, as displayed in \eqref{eq::approx4}. By replacing the expressions of $\tilde{ \mathbf{W}}^{(i,l)}_{\mu \nu}$ and $\tilde{ \mathbf{W}}^{(i,l)}_{\kappa\lambda}$, we obtain 
\begin{eqnarray}
\label{eq::fi_fact}
    \mathbf{F}^{(i)}(j,j')
    =\sum_{  \substack{n_1,n_2,n_3 \\ m_1,m_2,m_3=1} }^{N_i} \prod_{l=1}^{3}  \left (\alpha^{(i)}_{n_lm_l} \tilde{\mathbf{W}}^{(i,l)}_{\mu\nu}(j,n_l) \tilde{ \mathbf{W}}^{(i,l)}_{\kappa\lambda}(j',m_l) \right ).
\end{eqnarray}
By changing the order of summation in \eqref{eq::fi_fact} and exploiting  Khatri-Rao as well as Kronecker structures (see their definitions in Section~\ref{sec::def}), we obtain the factorized representation of $\mathcal{B}_{LTEI-\lrttei}^{lr}$ as given in the following theorem.
\begin{theorem}
\label{prop::fi_fact}
The long-range two-electrons integrals has a factorized representation that writes
\begin{equation}\label{approx}
       \mathcal{B}_{LTEI-\lrttei}^{lr}(\mu,\nu,\kappa,\lambda)
      =\frac{\omega}{\sqrt \pi} \sum_{i=1}^{N_{q_1}} w_{i}\sum_{  j =1}^{I_{\mu\nu} }\sum_{  j' =1}^{I_{\kappa\lambda} } c_jc_{j'}\mathbf{F}^{(i)}(j,j'),
\end{equation}
where  $ \mathbf{F}^{(i)}$ $\in \mathbb{R}^{I_{\mu\nu} \times I_{\kappa\lambda}}$  
\begin{equation}\label{Fi}
    \mathbf{F}^{(i)}=(\diamond_{l=1}^{3}\mathbf{\tilde{W}}^{(i,l)}_{\mu\nu})(\otimes_{l=1}^{3}\mathbf{A}^{(i)})(\diamond_{l=1}^{3}\mathbf{\tilde{W}}^{(i,l)}_{\kappa\lambda})^\top=\odot_{l=1}^{3}\mathbf{\tilde{W}}^{(i,l)}_{\mu\nu}\mathbf{A}^{(i)}\mathbf{\tilde{W}}^{(i,l)\top}_{\kappa\lambda},
\end{equation}
 where  $\mathbf{A}^{(i)} \in  \mathbb{R}^{N_i \times N_i}$ are the Chebyshev coefficients matrices such that  $\mathbf{A}^{(i)}(n_l,m_l)=  \alpha^{(i)}_{n_lm_l}  \text{ for } n_l,m_l\in \left [1,\cdots ,N_i  \right ],l \in  \left \{ 1,2,3 \right \}$ with $\alpha^{(i)}_{n_lm_l}$ defined in \eqref{chebcheb},$\mathbf{\tilde{W}}^{(i,l)}_{\mu\nu} \in \mathbb{R}^{I_{\mu\nu} \times N_i}$ and $\mathbf{\tilde{W}}^{(i,l)}_{\kappa\lambda} \in \mathbb{R}^{I_{\kappa\lambda} \times N_i}$ are the numerical approximation of the one-dimensional integrals  defined   in \eqref{eq::defV}.
\end{theorem}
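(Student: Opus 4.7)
The starting point is equation \eqref{eq::fi_fact}, which already expresses $\mathbf{F}^{(i)}(j,j')$ as a sextuple sum over the Chebyshev mode indices $n_1,n_2,n_3,m_1,m_2,m_3$. The plan is to recognize this sum as two equivalent matricial objects: a direction-wise Hadamard product of three independent bilinear forms, and equivalently a product of Khatri-Rao assembled matrices sandwiching a Kronecker product of Chebyshev-coefficient matrices.

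First I would exchange the order of summation in \eqref{eq::fi_fact} so that the six summations fully factorize by direction $l\in\{1,2,3\}$. Since both the product $\prod_{l=1}^{3}$ and the summation indices $(n_l,m_l)$ separate cleanly across $l$, the sextuple sum collapses into a product of three scalar bilinear forms,
\begin{equation*}
\mathbf{F}^{(i)}(j,j') \;=\; \prod_{l=1}^{3}\left(\sum_{n_l,m_l=1}^{N_i} \tilde{\mathbf{W}}^{(i,l)}_{\mu\nu}(j,n_l)\,\mathbf{A}^{(i)}(n_l,m_l)\,\tilde{\mathbf{W}}^{(i,l)}_{\kappa\lambda}(j',m_l)\right) \;=\; \prod_{l=1}^{3}\Bigl(\tilde{\mathbf{W}}^{(i,l)}_{\mu\nu}\mathbf{A}^{(i)}\tilde{\mathbf{W}}^{(i,l)\top}_{\kappa\lambda}\Bigr)(j,j').
\end{equation*}
Viewed matrix-wise over $(j,j')$, this is precisely the Hadamard product $\odot_{l=1}^{3}\tilde{\mathbf{W}}^{(i,l)}_{\mu\nu}\mathbf{A}^{(i)}\tilde{\mathbf{W}}^{(i,l)\top}_{\kappa\lambda}$, giving the second equality in \eqref{Fi}.

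To obtain the Khatri-Rao/Kronecker form (first equality of \eqref{Fi}), I would apply the identities of Proposition~\ref{prop::relations}. Starting from the right-hand side $(\diamond_{l=1}^{3}\tilde{\mathbf{W}}^{(i,l)}_{\mu\nu})(\otimes_{l=1}^{3}\mathbf{A}^{(i)})(\diamond_{l=1}^{3}\tilde{\mathbf{W}}^{(i,l)}_{\kappa\lambda})^\top$, the first relation $(\mathbf{A}\diamond\mathbf{B})(\mathbf{C}\otimes\mathbf{D})=(\mathbf{AC})\diamond(\mathbf{BD})$ (iterated across $l=1,2,3$) collapses the left factor into $\diamond_{l=1}^{3}(\tilde{\mathbf{W}}^{(i,l)}_{\mu\nu}\mathbf{A}^{(i)})$. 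Then using \eqref{eq::relkhatr} to rewrite $(\diamond_{l=1}^{3}\tilde{\mathbf{W}}^{(i,l)}_{\kappa\lambda})^\top$ as the column-wise Khatri-Rao product $*_{l=1}^{3}\tilde{\mathbf{W}}^{(i,l)\top}_{\kappa\lambda}$, and finally invoking the second identity of Proposition~\ref{prop::relations}, $(\mathbf{A}\diamond\mathbf{B})(\mathbf{C}*\mathbf{D})=(\mathbf{AC})\odot(\mathbf{BD})$, yields the Hadamard form obtained above. Substituting the resulting expression for $\mathbf{F}^{(i)}(j,j')$ back into \eqref{eq::approx4} produces exactly \eqref{approx}.

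The main obstacle is purely bookkeeping: one has to track carefully the distinction between row-wise ($\diamond$) and column-wise ($*$) Khatri-Rao products, and verify that the dimensional compatibility holds when iterating the identities of Proposition~\ref{prop::relations} across the three Cartesian directions. Aside from this, every step reduces either to reordering finite sums or to a direct application of the already-established matrix-product relations, so no analytic or approximation-theoretic argument is needed.
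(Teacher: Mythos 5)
Your proof is correct and takes essentially the same route as the paper: the paper's own argument consists of the derivation culminating in \eqref{eq::approx4} and \eqref{eq::fi_fact}, followed by the one-line remark that ``changing the order of summation and exploiting Khatri-Rao as well as Kronecker structures'' yields the result, which is precisely the factorization of the sextuple sum into three bilinear forms and the application of Proposition~\ref{prop::relations} and \eqref{eq::relkhatr} that you carry out explicitly. Your write-up in fact supplies the bookkeeping the paper omits, and there is no gap.
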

Algorithm~\ref{alg:capp} computes the approximated entries $\mathcal{B}_{LTEI-\lrttei}^{lr}(\mu, \nu, \kappa, \lambda)$ \eqref{approx} given the coefficient matrix obtained from the two-dimensional Chebyshev interpolation $\mathbf{A}^{(i)}$  $\in \mathbb{R}^{N_i \times N_i}, \text{ for } i \in  \left \{ 1,\ldots,N_{q_1} \right \}$ and for any   given pairs of $\mu, \nu, \kappa,\lambda$. This approach allows  to reduce the storage complexity (resp. arithmetic complexity) to $\mathcal{O}\left(  \sum_{i=1}^{N_{q_1}}N_i(N_i+I_{\mu\nu}+I_{\kappa\lambda}) \right)\sim \mathcal{O}\left( N_{q_1}N^{\frac{1}{3}}(N^{\frac{1}{3}}+I_{\mu\nu}+I_{\kappa\lambda}) \right)$ (resp. $\mathcal{O}\left( \sum_{i=1}^{N_{q_1}}N_iI_{\kappa\lambda}(N_i+I_{\mu\nu}) \right)\sim\mathcal{O}\left( N_{q_1}N^{\frac{1}{3}}I_{\kappa \lambda}(N^{\frac{1}{3}}+I_{\mu\nu}) \right)$), with   $N^{\frac{1}{3}}=(max(N_i)_{i \in \left \{ 1,\dots,N_{q_1} \right \}})$, instead of $\mathcal{O}(N(N+I_{\mu\nu}+I_{\kappa\lambda}))$ (resp. $\mathcal{O}(NI_{\kappa\lambda}(N+I_{\mu\nu}+I_{\kappa\lambda}))$), using naïve tensorized three dimensional quadrature on the computational box $[-b,b]^3$. Numerical results for this element-wise factorization are  summarized in Section~\ref{sec::numerical}.
\begin{algorithm}[H]
\caption{Compute $\mathcal{B}_{LTEI-\lrttei}^{lr}(\mu,\nu,\kappa,\lambda)$} \label{alg:capp}
\begin{algorithmic}
\Require Chebyshev coefficient matrices $\mathbf{A}^{(i)}$, $\mu,\nu$,$\kappa, \lambda$, $w_i$ for $i \in \left \{ 1,\cdots,N_{q_1} \right \}.$
\Ensure $\mathcal{B}_{LTEI-\lrttei}^{lr}(\mu,\nu,\kappa,\lambda)$
\State{$Compute ~\mathbf{\tilde{W}}_{\mu\nu}^{(i,1)}, \mathbf{\tilde{W}}_{\mu\nu}^{(i,2)}, \mathbf{\tilde{W}}_{\mu\nu}^{(i,3)}    I_{\mu\nu} \times N_i \text{ matrices }~(\text{according to } \eqref{eq::defV})$.}
\State{$Compute~\mathbf{\tilde{W}}_{\kappa\lambda}^{(i,1)}, \mathbf{\tilde{W}}_{\kappa\lambda}^{(i,2)}, \mathbf{\tilde{W}}_{\kappa\lambda}^{(i,3)}    I_{\kappa\lambda} \times  N_i  \text{ matrices }
~~~~(\text{according to } \eqref{eq::defV})$.}
\State{Set $s=0.$}
\For{i=1 to $N_{q_1}$}
\State{$\mathbf{F}^{(i)}=\odot_{l=1}^{3}\mathbf{\tilde{W}}^{(i,l)}_{\mu\nu}\mathbf{A}^{(i)}\mathbf{\tilde{W}}^{(i,l)\top}_{\kappa\lambda}$.}
\State{$s=s+w_i\sum_{  j =1}^{I_{\mu\nu} }\sum_{  j' =1}^{I_{\kappa\lambda} } c_jc_{j'}\mathbf{F}^{(i)}(j,j')$.}
\EndFor
\State{$\mathcal{B}_{LTEI-\lrttei}^{lr}(\mu,\nu,\kappa,\lambda)=\frac{\omega}{\sqrt \pi} s.$}
\end{algorithmic}
\end{algorithm}

\subsection{Error bound of the two-electron integrals numerical approximation}
\label{subsubsection:eb}
In what follows, we give a theoretical  error bound associated with  the element-wise numerical approximation of  $\mathcal{B}^{lr}(\mu,\nu,\kappa,\lambda)$ introduced in \eqref{approx}.
\begin{prop}
The element-wise  error $\epsilon$ between the long-range two-electron integrals  $\mathcal{B}^{lr}(\mu,\nu,\kappa,\lambda)$, given a finite box $[-b,b]^3$,   and it's approximation $\mathcal{B}_{LTEI-\lrttei}^{lr}$ can be bounded as follows
\begin{equation}
      \left | \epsilon \right |:=  \left | \mathcal{B}^{lr}(\mu,\nu,\kappa,\lambda)-\mathcal{B}_{LTEI-\lrttei}^{lr}(\mu,\nu,\kappa,\lambda) \right | \leq  c_1 \sup_{\boldsymbol{x}, \boldsymbol{y} \in \left [ -b,b \right ]^3}\left( \left \| \frac{d^{2N_{q_1}}}{d s^{2N_{q_1}}}f(s,\boldsymbol{x},\boldsymbol{y}) \right \|_{\infty,[0,\omega]} \right)+\frac{\omega}{\sqrt{\pi}} c_2,
\end{equation}
where we define the multivariate function 
\begin{equation}
\label{eq::h_def}
    f(s,\boldsymbol{x},\boldsymbol{y})=exp(-s^2 \left \| \boldsymbol{x}-\boldsymbol{y}    \right \|^2), s \in  \left [ 0,\omega \right ], \boldsymbol{x}, \boldsymbol{y} \in \left [ -b,b \right ]^3.
\end{equation}
$\epsilon$ is the approximation error, $N_{q_1}$ is the number of quadrature points, $c_1$ and $c_2$  are defined in the following proof.
\end{prop}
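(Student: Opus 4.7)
The plan is to decompose the total error into three numerical approximations that are applied successively to $\mathcal{B}^{lr}$, and bound each contribution separately using the tools already stated: Proposition~\ref{def::gauss} for Gaussian quadrature and Proposition~\ref{def::Chebyupperbound} for two-dimensional Chebyshev interpolation. Concretely, I would introduce two intermediate quantities. Let $\mathcal{B}^{(1)}$ be obtained from $\mathcal{B}^{lr}$ by replacing the kernel $K(\boldsymbol{x},\boldsymbol{y})$ by its $N_{q_1}$-point Gaussian quadrature approximation in $s$ coming from \eqref{TEI2}, while keeping the remaining integrals exact. Let $\mathcal{B}^{(2)}$ be obtained from $\mathcal{B}^{(1)}$ by further replacing each factor $\exp\!\bigl(-(\tfrac{\omega}{2}+\tfrac{\omega}{2}z_i)^2(\boldsymbol{x}_l-\boldsymbol{y}_l)^2\bigr)$ by its two-dimensional Chebyshev interpolant on $[-b,b]^2$, keeping the one-dimensional integrals in \eqref{eq::defV} exact. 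Then $\mathcal{B}_{LTEI-\lrttei}^{lr}$ differs from $\mathcal{B}^{(2)}$ only through the replacement of the exact one-dimensional integrals \eqref{eq::defV} by their Gaussian quadrature approximations $\tilde{\mathbf{W}}^{(i,l)}_{\mu\nu}$ and $\tilde{\mathbf{W}}^{(i,l)}_{\kappa\lambda}$. A triangle inequality yields
\begin{equation*}
|\epsilon|\;\leq\; |\mathcal{B}^{lr}-\mathcal{B}^{(1)}|\;+\;|\mathcal{B}^{(1)}-\mathcal{B}^{(2)}|\;+\;|\mathcal{B}^{(2)}-\mathcal{B}_{LTEI-\lrttei}^{lr}|.
\end{equation*}

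For the first term, I would apply Proposition~\ref{def::gauss} pointwise in $(\boldsymbol{x},\boldsymbol{y})$ to the $s$-integral defining $K(\boldsymbol{x},\boldsymbol{y})$ in \eqref{eq::kernel}, with integrand $f(s,\boldsymbol{x},\boldsymbol{y})=\exp(-s^2\|\boldsymbol{x}-\boldsymbol{y}\|^2)$ from \eqref{eq::h_def}, which gives
\begin{equation*}
|K(\boldsymbol{x},\boldsymbol{y})-K^{GQ}(\boldsymbol{x},\boldsymbol{y})|\;\leq\;\frac{2}{\sqrt{\pi}}\,\frac{\omega^{2N_{q_1}+1}(N_{q_1}!)^4}{(2N_{q_1}+1)\,[(2N_{q_1})!]^3}\,\bigl\|\tfrac{d^{2N_{q_1}}}{ds^{2N_{q_1}}}f(s,\boldsymbol{x},\boldsymbol{y})\bigr\|_{\infty,[0,\omega]}.
\end{equation*}
Integrating against $|g_\mu(\boldsymbol{x})g_\nu(\boldsymbol{x})g_\kappa(\boldsymbol{y})g_\lambda(\boldsymbol{y})|$ over $[-b,b]^3\times[-b,b]^3$, pulling the supremum outside, and setting
\begin{equation*}
c_1 \;=\; \frac{2}{\sqrt{\pi}}\,\frac{\omega^{2N_{q_1}+1}(N_{q_1}!)^4}{(2N_{q_1}+1)\,[(2N_{q_1})!]^3}\,\int_{[-b,b]^3}\int_{[-b,b]^3}|g_\mu g_\nu|(\boldsymbol{x})\,|g_\kappa g_\lambda|(\boldsymbol{y})\,d\boldsymbol{x}\,d\boldsymbol{y}
\end{equation*}
recovers the first term of the stated bound.

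The second and third terms are then packaged together into the constant $\frac{\omega}{\sqrt{\pi}}c_2$. For $|\mathcal{B}^{(1)}-\mathcal{B}^{(2)}|$ I would write the difference of the two tensorized products of exponentials in the form $\prod_{l=1}^{3}a_l-\prod_{l=1}^{3}\tilde a_l=\sum_{l=1}^{3}\bigl(\prod_{l'<l}\tilde a_{l'}\bigr)(a_l-\tilde a_l)\bigl(\prod_{l'>l}a_{l'}\bigr)$, so that each term can be controlled by a single one-dimensional Chebyshev interpolation error bounded via Proposition~\ref{def::Chebyupperbound}, with the remaining factors absorbed into uniform bounds on $[-b,b]$. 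For $|\mathcal{B}^{(2)}-\mathcal{B}_{LTEI-\lrttei}^{lr}|$ I would use again Proposition~\ref{def::gauss} applied to each of the one-dimensional integrals \eqref{eq::defV}. Summing the contributions against the Gaussian quadrature weights $w_i$, the Chebyshev coefficients $\alpha^{(i)}_{n_l m_l}$, and the primitive coefficients $c_j,c_{j'}$ yields a single explicit constant $c_2$ multiplying $\omega/\sqrt{\pi}$, completing the bound.

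The main obstacle will be the second step: although Propositions~\ref{def::Chebyupperbound} and \ref{def::gauss} control the one-dimensional errors, propagating them through the tensor-product structure \eqref{eq::Approx_kernel}–\eqref{eq::fi_fact} requires the telescoping identity above together with uniform bounds on the interpolants $\tilde a_l$, the Lebesgue constant $\delta$, and the growth of $\sum_{n_l,m_l}|\alpha^{(i)}_{n_l m_l}|$; assembling these into a single, finite constant $c_2$ that is independent of $(\mu,\nu,\kappa,\lambda)$ is the delicate bookkeeping part of the argument.
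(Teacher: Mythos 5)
Your proposal is correct and follows essentially the same route as the paper: the paper likewise first splits off the $s$-quadrature error via the triangle inequality (its $\epsilon_1$, bounded exactly as your first term using Proposition~\ref{def::gauss} pointwise in $(\boldsymbol{x},\boldsymbol{y})$ and then integrating the Gaussian factors), and then absorbs both the Chebyshev interpolation error and the one-dimensional quadrature error on \eqref{eq::defV} into a single remainder $\epsilon_2\le\frac{\omega}{\sqrt\pi}c_2$, propagated through the three coordinate directions by the same kind of telescoping/factorization you describe. The only cosmetic differences are that the paper merges your $|\mathcal{B}^{(1)}-\mathcal{B}^{(2)}|$ and $|\mathcal{B}^{(2)}-\mathcal{B}^{lr}_{LTEI-\lrttei}|$ into one per-direction bound $\beta_i$ containing both error sources, and applies Stirling's formula to make the quadrature constants explicit.
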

\begin{proof}
We start by introducing the following function
\begin{eqnarray}
h(z_i)&=&\int_{[-b,b]^{3}}\int_{[-b,b]^{3}}g_{\mu \nu}(\boldsymbol{x}) g_{\kappa \lambda}(\boldsymbol{y})  exp\left (-(\frac{\omega}{2}+\frac{\omega}{2}z_{i})^{2}\left \| \boldsymbol{x}-\boldsymbol{y}    \right \|^{2}  \right )d\boldsymbol{x}d\boldsymbol{y}\\
&=&\sum_{j=1}^{I_{\mu\nu}}\sum_{j'=1}^{I_{\kappa\lambda}}c_jc_{j'}\left ( \prod_{l=1}^{3}\int_{[-b,b]^2}g^{(j)}_{\mu \nu}(x_l)g^{(j')}_{\kappa\lambda}(y_l) exp\left (-(\frac{\omega}{2}+\frac{\omega}{2}z_{i})^{2} (x_l-y_l)^{2}  \right )dx_ldy_l\right ),
\end{eqnarray}
with $z_i, i \in \left \{ 1,\cdots,N_{q_1} \right \}$ being the Gaussian quadrature nodes. The upper bound of $\epsilon$ can be found as follows
\begin{small}
\begin{equation}
     \left |  \epsilon\right |=\left |\mathcal{B}^{lr}(\mu,\nu,\kappa,\lambda)- \mathcal{B}_{LTEI-\lrttei}^{lr}(\mu,\nu,\kappa,\lambda)\right | \leq   \underbrace{\left |\mathcal{B}^{lr}(\mu,\nu,\kappa,\lambda)-  \frac{\omega}{\sqrt \pi}\sum_{i=1}^{N_{q_1}} w_{i} h(z_i)\right |}_{\epsilon_1}
     + \underbrace{\left |\frac{\omega}{\sqrt \pi}\sum_{i=1}^{N_{q_1}} w_{i} h(z_i)-\mathcal{B}_{LTEI-\lrttei}^{lr}(\mu,\nu,\kappa,\lambda)  \right |}_{\epsilon_2},
\end{equation}
\end{small}
Using Proposition~\ref{def::gauss}, triangle inequality, and Stirling formula given by $n! \approx \sqrt{2 \pi n}\left(\frac{n}{\mathrm{e}}\right)^{n}$, $\epsilon_1$ is bounded as follows
\begin{equation}
    \epsilon_1  \leqslant c_1\sup_{\boldsymbol{x}, \boldsymbol{y} \in \left [ -b,b \right ]^3}\left( \left \| \frac{d^{2N_{q_1}}}{d s^{2N_{q_1}}}f(s,\boldsymbol{x},\boldsymbol{y}) \right \|_{\infty,[0,\omega]} \right)\text{, }  c_1=\frac{2e_{N_{q_1}} }{\sqrt{\pi}}b^6 \left \| g_{\mu\nu}\right \|_{\infty,\left [ -b,b \right ]^3}\left \| g_{\kappa\lambda}\right \|_{\infty,\left [ -b,b \right ]^3},
\end{equation}
 with $e_{N_{q_1}}=\frac{\omega^{2N_{q_1}+1}e^{2N_{q_1}} (N_{q_1}\pi)^{\frac{1}{2}}}{2^{6N_{q_1}+1}N_{q_1}^{2N_{q_1}}(2N_{q_1}+1)}$. 
The error bound of $\epsilon_2$ needs a more detailed  explanation. We replace $\mathcal{B}_{LTEI-\lrttei}^{lr}(\mu,\nu,\kappa,\lambda)$ by its expression defined in \eqref{approx} such that 
\begin{equation}
     \left| \epsilon_2 \right|=\left| \frac{\omega}{\sqrt{\pi}}\sum_{i=1}^{N_{q_1}}w_i\left( h(z_i)-\sum_{  j =1}^{I_{\mu\nu} }\sum_{  j' =1}^{I_{\kappa\lambda} } c_jc_{j'}\mathbf{F}^{(i)}(j,j') \right) \right|\le \frac{\omega}{\sqrt{\pi}} \sum_{i=1}^{N_{q_1}}\left| w_i \right|\left| h(z_i)-\sum_{  j =1}^{I_{\mu\nu} }\sum_{  j' =1}^{I_{\kappa\lambda} } c_jc_{j'}\mathbf{F}^{(i)}(j,j') \right|,
 \end{equation}
 with $\mathbf{F}^{(i)}(j,j')$ being defined in \eqref{eq::fi_fact}. Using the triangle inequality, the expression of $\left| h(z_i)-\sum_{  j =1}^{I_{\mu\nu} }\sum_{  j' =1}^{I_{\kappa\lambda} } c_jc_{j'}\mathbf{F}^{(i)}(j,j') \right|$, for $i \in  \left\{1,\cdots,N_{q_1}  \right\}$, can be bounded as follows
  \begin{small}
 \begin{eqnarray}
 \label{eq::first_err}
    \left| h(z_i)-\sum_{  j =1}^{I_{\mu\nu} }\sum_{  j' =1}^{I_{\kappa\lambda} } c_jc_{j'}\mathbf{F}^{(i)}(j,j') \right| &\le& \sum_{j=1}^{I_{\mu\nu}}\sum_{j'=1}^{I_{\kappa\lambda}}c_jc_{j'}\left| \prod_{l=1}^{3}\int_{[-b,b]^2}g_{\mu\nu}^{^{(j)}}(x_l)g_{\kappa\lambda}^{^{(j')}}(y_l)e^{-(\frac{\omega}{2}+\frac{\omega}{2}z_i)^2(x_l-y_l)^2}dx_ldy_l-\mathbf{F}^{(i)}(j,j')\right|.
\end{eqnarray}
 \end{small}
 In order to evaluate the bound of \eqref{eq::first_err}, one needs to evaluate the error bound of the following expression using Propositon~\ref{def::Chebyupperbound} and Proposition~\ref{def::gauss}
 \begin{align}
\label{eq::second_err}
 \left|\int_{[-b,b]^2}g_{\mu\nu}^{^{(j)}}(x_l)g_{\kappa\lambda}^{^{(j')}}(y_l)e^{-(\frac{\omega}{2}+\frac{\omega}{2}z_i)^2(x_l-y_l)^2}-\sum_{n_l,m_l}^{N_i}\alpha_{n_lm_l}^{(i)}\tilde{\mathbf{W}}^{(i,l)}_{\mu\nu}(j,n_l) \tilde{ \mathbf{W}}^{(i,l)}_{\kappa\lambda}(j',m_l) \right|
&\le \beta_i, l \in  \left \{ 1,2,3 \right \},
\end{align}
where for $i \in  \left\{ 1,\cdots,N_{q_1} \right\},j \in \left\{ 1,\cdots,I_{\mu\nu} \right\},$ and $j' \in  \left\{ 1,\cdots, I_{\kappa\lambda} \right\}$, $\beta_i$ is defined as follows
\begin{small}
    \begin{align}
\beta_i&=(2b)^2\left \| g_{\mu\nu}^{(j)}\right \|_{\infty,[-b,b]}\left \| g_{\kappa\lambda}^{(j')}\right \|_{\infty,[-b,b]}e_{N_i}
\\ & +e_{N_{q_2}}\left ( \left \| g_{\kappa\lambda}^{(j')}\right \|_{\infty,[-b,b]}\left \| \frac{d^{2N_{q_2}} \left (g_{\mu\nu}^{(j)} T_{n_1}^{(i)}  \right )(x)}{dx^{2N_{q_2}}}    \right \|_{\infty,[-b,b]} +N_{q_2} \max_{1\leq i\leq N_{q_2}}(w_i)\left \| g_{\mu\nu}^{(j)}\right \|_{\infty,[-b,b]} \left \| \frac{d^{2N_{q_2}}\left (g_{\kappa\lambda}^{(j')} T_{m_1}^{(i)}  \right )(y) }{d y^{2N_{q_2}}}    \right \|_{\infty,[-b,b]}\right ),
\end{align}
\end{small}
with $e_{N_{q_2}}=\frac{(2b)^{2N_{q_2}+1}e^{2N_{q_2}}(N_{q_2}\pi)^{\frac{1}{2}}}{2^{6N_{q_2}+1}N_{q_2}^{2N_{q_2}}(2N_{q_2}+1)}$. The term $e_{N_i}$ is  defined as follows
\begin{equation}
    e_{N_i}=\frac{b^{N_i+1}}{2^{1+N_i}}\frac{e^{ -N_i+1}}{\sqrt{2 \pi \left (N_i+1  \right )} \left (1+N_i^{\frac{1}{3} \left (N_i+1  \right )}  \right )}\left [ \max _{  -b\le \xi \le b} \left|\frac{\partial^{N_i+1} \mathscr{F} (z_i,\xi,y)}{\partial \xi^{N_i+1}}\right|+\delta\max _{-b\le \xi,\eta \le b} \left |\frac{\partial^{N_i+1} \mathscr{F}(z_i,\xi, \eta)}{\partial \eta^{N_i+1}}  \right |\right ],
\end{equation}
where $\mathscr{F}(z_i,x,y)=e^{ -(\frac{\omega}{2}+\frac{\omega}{2}z_i)^2(x-y)^2 }$ with $z_i$ being the Gaussian quadrature points and $\delta$ is defined in \eqref{def::Chebyupperbound}. Now, by factorizing \eqref{eq::first_err} and using \eqref{eq::second_err}, one arrives at the desired error bound of $\epsilon_2$
\begin{equation}
     \epsilon_2 \leq \frac{\omega}{\sqrt{\pi}}c_2,
 \end{equation}
 with 
 \begin{small}
 \begin{equation}
   c_2=N_{q_1} \sup_{ \substack{1\leq i\leq N_{q_1}} }  \left(   \left| w_i \right|\sum_{j}^{I_{\mu\nu}}\sum_{j'}^{I_{\kappa\lambda}}c_jc_{j'}
(2b)^4\left \| g_{\mu\nu}^{(j)}\right \|^2_{\infty,[-b,b]}\left \| g_{\kappa\lambda}^{(j')}\right \|^2_{\infty,[-b,b]}\sup_{1\le n_1,m_1\le N_i } \left( (1+N_i\alpha_{n_1,m_1}^{(i)}+\left( N_i\alpha_{n_1,m_1}^{(i)} \right)^2)\beta_i   \right)\right ).
 \end{equation}
 \end{small}
 \end{proof}
 All along this work, we consider a fixed number of quadrature points $N_{q_2}$ for the evaluation of \eqref{eq::defV}  and the study is not being done on the parameter $N_{q_2}$ since the computations using these one-dimensional Gaussian quadrature to evaluate \eqref{eq::defV} are involved in the precomputation steps.
As we notice here, the approximation error depends on the value of $\omega$, the number of quadrature points $N_{q_1}$, the regularity of the function $f$ , the Gaussian-type functions and on the dimension of the hypercube.
\subsection{A new decomposition of TEI tensor $\mathcal{B}^{lr}$ through {\lrttei} approach }
As already discussed in the introduction, one of the main steps in many methods in quantum chemistry involves the application of
the two-electron integrals tensor $\mathcal{B}^{lr}  \in  \mathbb{R}^{N_b \times N_b \times N_b \times N_b}$ to a vector with $N_b^2$ elements or a set of such vectors. 
To perform efficiently this contraction operation, we introduce in this section a factorized representation of the fourth-order two-electron integrals tensor $\mathcal{B}^{lr}$ that expands the factorized representation of its elements summarized in Theorem~\ref{prop::fi_fact}. We show also that the obtained tensorized structure is beneficial to accelerate contraction operations.
\subsubsection{Factorized expression of $\mathcal{B}^{lr}$ }
In what follows we derive the factorized representation of $\mathbf{B}^{lr}$(mode-(1,2) matricization of $\mathcal{B}^{lr} \in \mathbb{R}^{N_b \times N_b  \times N_b\times N_b}$). We slightly modify  the expression of the approximation of  the two-electron integrals (see  Theorem~\ref{prop::fi_fact}) by changing the order of summation to obtain
\begin{equation}
\label{eq::approx33}
    \mathcal{B}_{LTEI-\lrttei}^{lr}(\mu,\nu,\kappa,\lambda) = \frac{\omega}{\sqrt \pi} \sum_{i=1}^{N_{q_1}} w_{i} \left[  \sum_{  \substack{n_1,n_2,n_3\\ m_1,m_2,m_3=1}}^{N_i}\left( \sum_{  j =1}^{I_{\mu\nu} }c_j\prod_{l=1}^{3}   \tilde{ \mathbf{W}}^{(i,l)}_{\mu\nu} (j,n_l) \right)   \prod_{l=1}^{3} \alpha^{(i)}_{n_lm_l}   \left( \sum_{  j' =1}^{I_{\kappa\lambda} }  c_{j'}\prod_{l=1}^{3} \tilde{ \mathbf{W}}^{(i,l)}_{\kappa\lambda}(j',m_l) \right)   \right].
\end{equation}
\label{subsubsec::first}
We introduce the matrices $\mathbf{M}_{\lrttei}^{(i)} \in \mathbb{R}^{N_b^2 \times N_i^3}$ with single entries $\sum_{  j =1}^{I_{\mu\nu} }c_j\prod_{l=1}^{3}   \tilde{ \mathbf{W}}^{(i,l)}_{\mu\nu} (j,n_l), \mu, \nu \in  \left\{ 1,\cdots,N_b \right\}, n_l \in   \left\{ 1,\cdots,N_i \right\}, l \in \left\{ 1,2,3\right\}$ such that the approximation of   mode-(1,2) matricization of $\mathcal{B}_{LTEI-\lrttei}^{lr}$, referred to as $\mathbf{B}_{LTEI-\lrttei}^{lr}$, writes
\begin{small}
\begin{equation}\label{eq::tensors}
    \mathbf{B}_{LTEI-\lrttei}^{lr}=\frac{\omega}{\sqrt \pi}\sum_{i=1}^{N_{q_1}}w_i\mathbf{M}_{\lrttei}^{(i)}(\otimes_{l=1}^{3}\mathbf{A}^{(i)})\mathbf{M}_{\lrttei}^{(i) \top} \in \mathbb{R}^{N_b^2 \times N_b^2} \text{ and }  \mathbf{M}_{\lrttei}^{(i)}(\mu\nu,n_1n_2n_3)=\sum_{  j =1}^{I_{\mu\nu} }c_j\prod_{l=1}^{3}   \tilde{ \mathbf{W}}^{(i,l)}_{\mu\nu} (j,n_l), i \in \left \{ 1,...,N_{q_1} \right \},
\end{equation}
\end{small}
where $\mathbf{A}^{(i)}=\left ( \alpha^{(i)}_{n_lm_l} \right )_{n_l,m_l\in \left [1,\cdots ,N_i  \right ]},l \in  \left \{ 1,2,3 \right \}$ are  the coefficient matrices obtained from the two-dimensional Chebychev  interpolation (see Definition~\ref{chebcheb}).
\subsubsection{Fast evaluation of tensor products}
\label{sec::fasteval}
In practice, we only need to compute the matrix $\mathbf{M}_{\lrttei}^{(i)}$  with the maximum number of interpolation points $N$. We denote this matrix by  $\mathbf{M}_{\lrttei,max} \in \mathbb{R}^{N_b^2 \times N }$. In fact, the other matrices  $\mathbf{M}_{\lrttei}^{(i)} \in \mathbb{R}^{N_b^2 \times N_i^3}, N_i^3  \le N, i \in  \left\{ 1,\cdots,N_{q_1} \right\}$ have common entries with $\mathbf{M}_{\lrttei,max}$. For example, given the two following matrices $\mathbf{M}_{\lrttei}^{(i)} \in \mathbb{R}^{N_b^2 \times N_i}$ and $\mathbf{M}_{\lrttei}^{(j)} \in \mathbb{R}^{N_b^2 \times N_j}$ with $N_i  < N_j$ and $i,j \in  \left \{1,\cdots,N_{q_1}  \right \}$, we have
\begin{equation}
    \mathbf{M}_{\lrttei}^{(i)}(\mu\nu,n_1n_2n_3)=\mathbf{M}_{\lrttei}^{(j)}(\mu\nu,n_1n_2n_3), n_1,n_2,n_3 \in  \left \{1,\cdots,N_{i}  \right \}.
\end{equation}
This can also be illustrated in Figure~\ref{fig:tensor_contraction}. Therefore, the storage complexity for storing $\mathbf{M}_{\lrttei,max}$ is $\mathcal{O}(N N_b^2 )$. 
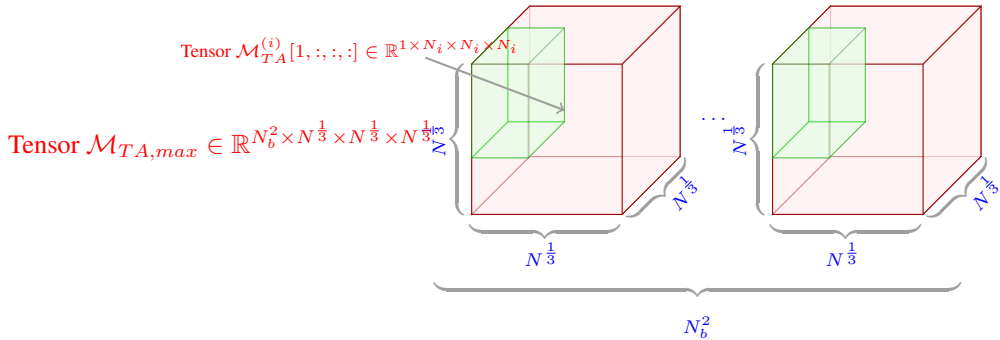
\begin{figure}[H]
\newcommand{\Depth}{2}
\newcommand{\Height}{2}
\newcommand{\Width}{2}
\newcommand{\xx}{1}
\newcommand{\yy}{1}
\newcommand{\zz}{1}
\begin{tikzpicture}
\coordinate (O) at (4,0,0);
\coordinate (A) at (4,\Width,0);
\coordinate (B) at (4,\Width,\Height);
\coordinate (C) at (4,0,\Height);
\coordinate (D) at (4+\Depth,0,0);
\coordinate (E) at (4+\Depth,\Width,0);
\coordinate (F) at (4+\Depth,\Width,\Height);
\coordinate (G) at (4+\Depth,0,\Height);
\draw[red!60!black,fill=red!5] (O) -- (C) -- (G) -- (D) -- cycle;
\draw[red!60!black,fill=red!5] (O) -- (A) -- (E) -- (D) -- cycle;
\draw[red!60!black,fill=red!5] (O) -- (A) -- (B) -- (C) -- cycle;
\draw[red!60!black,fill=red!5,opacity=0.8] (D) -- (E) -- (F) -- (G) -- cycle;
\draw[red!60!black,fill=red!5,opacity=0.6] (C) -- (B) -- (F) -- (G) -- cycle;
\draw[red!60!black,fill=red!5,opacity=0.8] (A) -- (B) -- (F) -- (E) -- cycle;
\coordinate (O) at (4,0.75\Width,0.75\Height);
\coordinate (A) at (4,\Width,0.75\Height);
\coordinate (B) at (4,\Width,\Height);
\coordinate (C) at (4,0.75\Height,\Height);
\coordinate (G) at (4+0.75\Depth,0.75\Height,\Height);
\coordinate (D) at (4+0.75\Depth,0.75\Height,0.75\Height);
\coordinate (E) at (4+0.75\Depth,\Width,0.75\Depth);
\coordinate (F) at (4+0.75\Depth,\Width,\Height);
\draw[green!80!black,fill=green!10,opacity=0.6] (O) -- (C) -- (G) -- (D) -- cycle;
\draw[green!80!black,fill=green!10,opacity=0.6] (O) -- (A) -- (E) -- (D) -- cycle;
\draw[green!80!black,fill=green!10,opacity=0.6] (O) -- (A) -- (B) -- (C) -- cycle;
\draw[green!40!black,fill=green!10,opacity=0.2] (D) -- (E) -- (F) -- (G) -- cycle;
\draw[green!40!black,fill=green!10,opacity=0.2] (C) -- (B) -- (F) -- (G) -- cycle;
\draw[green!40!black,fill=green!10,opacity=0.2] (A) -- (B) -- (F) -- (E) -- cycle;
\draw (0.5+2,0.5,0) node {\scriptsize{\color{blue!100}$\dots$  }};
\draw (0.2+4,-1.3,0) node {\scriptsize{\color{blue}$N^{\frac{1}{3}}$  }};
\draw (0.2+4,-1,0) node[rotate = 0] {{\color{gray!75}$\underbrace{\hspace{2cm}}$}};
\draw (-0.5+4,1,2) node[rotate = 90] {\scriptsize{\color{blue}$N^{\frac{1}{3}}$}};
\draw (-0.2+4,1,2) node[rotate = 270] {{\color{gray!75}$\underbrace{\hspace{2cm}}$}};
\draw (2.5+4,0,1.2) node[rotate = 45] {\scriptsize{\color{blue} $N^{\frac{1}{3}}$}};
\draw (2.2+4,0,1.2) node[rotate = 45] {{\color{gray!75}$\underbrace{\hspace{1.1cm}}$}};
\coordinate (O) at (0,0,0);
\coordinate (A) at (0,\Width,0);
\coordinate (B) at (0,\Width,\Height);
\coordinate (C) at (0,0,\Height);
\coordinate (D) at (\Depth,0,0);
\coordinate (E) at (\Depth,\Width,0);
\coordinate (F) at (\Depth,\Width,\Height);
\coordinate (G) at (\Depth,0,\Height);
\draw[red!60!black,fill=red!5] (O) -- (C) -- (G) -- (D) -- cycle;
\draw[red!60!black,fill=red!5] (O) -- (A) -- (E) -- (D) -- cycle;
\draw[red!60!black,fill=red!5] (O) -- (A) -- (B) -- (C) -- cycle;
\draw[red!60!black,fill=red!5,opacity=0.8] (D) -- (E) -- (F) -- (G) -- cycle;
\draw[red!60!black,fill=red!5,opacity=0.6] (C) -- (B) -- (F) -- (G) -- cycle;
\draw[red!60!black,fill=red!5,opacity=0.8] (A) -- (B) -- (F) -- (E) -- cycle;
\coordinate (O) at (0,0.75\Width,0.75\Height);
\coordinate (A) at (0,\Width,0.75\Height);
\coordinate (B) at (0,\Width,\Height);
\coordinate (C) at (0,0.75\Height,\Height);
\coordinate (G) at (0.75\Depth,0.75\Height,\Height);
\coordinate (D) at (0.75\Depth,0.75\Height,0.75\Height);
\coordinate (E) at (0.75\Depth,\Width,0.75\Depth);
\coordinate (F) at (0.75\Depth,\Width,\Height);
\draw[green!80!black,fill=green!10,opacity=0.6] (O) -- (C) -- (G) -- (D) -- cycle;
\draw[green!80!black,fill=green!10,opacity=0.6] (O) -- (A) -- (E) -- (D) -- cycle;
\draw[green!80!black,fill=green!10,opacity=0.6] (O) -- (A) -- (B) -- (C) -- cycle;
\draw[green!40!black,fill=green!10,opacity=0.2] (D) -- (E) -- (F) -- (G) -- cycle;
\draw[green!40!black,fill=green!10,opacity=0.2] (C) -- (B) -- (F) -- (G) -- cycle;
\draw[green!40!black,fill=green!10,opacity=0.2] (A) -- (B) -- (F) -- (E) -- cycle;
\draw (0.2,-1.3,0) node {\scriptsize{\color{blue}$N^{\frac{1}{3}}$  }};
\draw (0.2,-1,0) node[rotate = 0] {{\color{gray!75}$\underbrace{\hspace{2cm}}$}};
\draw (-0.5,1,2) node[rotate = 90] {\scriptsize{\color{blue}$N^{\frac{1}{3}}$}};
\draw (-0.2,1,2) node[rotate = 270] {{\color{gray!75}$\underbrace{\hspace{2cm}}$}};
\draw (2.5,0,1.2) node[rotate = 45] {\scriptsize{\color{blue} $N^{\frac{1}{3}}$}};
\draw (2.2,0,1.2) node[rotate = 45] {{\color{gray!75}$\underbrace{\hspace{1.1cm}}$}};
\draw [draw=gray!75,thick,->] (-1,1.7,1) -- (0.85,1,1) node [below] {};
\draw (-2,1.8,1) node {\scriptsize{\color{red} Tensor $ \mathcal{M}_{\lrttei}^{(i)}[1,:,:,:] \in \mathbb{R}^{1 \times  N_i \times N_i \times N_i }$}};
\draw (-3.7,0.6,1) node {{\color{red}Tensor $\mathcal{M}_{\lrttei,max}\in\mathbb{R}^{N_b^2\times N^{\frac{1}{3}} \times N^{\frac{1}{3}} \times N^{\frac{1}{3}} }$}};
\draw (3,-1,2) node[rotate = 0] {{\color{gray!75}$\underbrace{\hspace{7cm}}$}};
\draw (3,-1.5,2) node[rotate = 0]
{\scriptsize{\color{blue} $N_b^2$}};
\end{tikzpicture}
    \caption{$\mathcal{M}_{\lrttei,max} \in \mathbb{R}^{N_b^2 \times N^{\frac{1}{3}} \times N^{\frac{1}{3}} \times N^{\frac{1}{3}} }$ is the tensorization of $\mathbf{M}_{\lrttei,max}  \in \mathbb{R}^{N_b^2 \times N  }$.}
    \label{fig:tensor_contraction}
\end{figure}
By doing so, we can extract $\mathcal{M}_{\lrttei}^{(i)} \in\mathbb{R}^{N_b^2\times N_i \times N_i \times N_i }$ tensors that we unfold back to matrices $\mathbf{M}_{\lrttei }^{(i)}\in\mathbb{R}^{N_b^2\times N_i^3}$ by mode-1 matricization  defined in \eqref{eq::mode}.
We can exploit the tensorized structure of the factorized long-range two-electron integral tensor in equation \eqref{eq::tensors}  to reduce the application cost of the product between the tensorized form $\otimes_{l=1}^{3}\mathbf{A}^{(i)} \in \mathbb{R}^{N_i^{3} \times N_i^{3}}$ and $\mathbf{M}_{\lrttei}^{(i)} \in \mathbb{R}^{N_{b}^{2} \times N_{i}^{3}}$ from $\mathcal{O}\left(N_i^{6}N_b^2\right)$ to $\mathcal{O}\left(N_i^{4}N_b^2\right)$. Given the Definition~\ref{eq::mode}, the product $\left(\stackrel{}{\otimes}_{l=1}^{3} \mathbf{A}^{(i)}\right)\mathbf{M}_{\lrttei}^{(i)^\top} $ can be  defined entry-wise by
\begin{eqnarray}\label{ig1}
   \left (  \left(\stackrel{}{\otimes}_{l=1}^{3} \mathbf{A}^{(i)}\right)\mathbf{M}_{\lrttei}^{(i)^\top}   \right ) (n,j) \nonumber
    &=&\sum_{m_1,m_2,m_3=1}^{N_i} \left(\prod_{l=1}^{3} \mathbf{A}^{(i)}(n_l, m_l)\right) \mathbf{M}_{\lrttei}^{(i)^\top} (m_1m_2m_3,j)
    \\&
=&\sum_{m_1,m_2,m_3=1}^{N_i} \left(\prod_{l=1}^{2}  \mathbf{A}^{(i)}(n_l, m_{l})\right)\left( \mathbf{A}^{(i)}\mathbf{M}^{(i)^\top}_{{\lrttei},(4)}\right)\left (n_3,m_1m_2j  \right ),
\end{eqnarray}
where $\mathbf{M}^{(i)^\top}_{{\lrttei},(4)}$ is the mode-4 matricization (see Definition~\ref{eq::mode}) of the fourth order tensor $ \mathcal{M}_{\lrttei}^{(i)}  \in \mathbb{R}^{N_b^2 \times N_i \times N_i \times N_i}$. From \eqref{ig1}, we notice that we need to perform three times the matrix-matrix products of sizes $N_i \times N_i$ and $N_i \times N_i^2N_b^2$, leading to an overall time complexity of $\mathcal{O}\left(3N_i^{4}N_b^2\right) \sim \mathcal{O}\left(N_i^{4}N_b^2\right)$. If we want to compute the whole tensor, we need to sum over $i \in \left \{ 1,\dots,N_{q1} \right \}$ which yields to a complexity of $\mathcal{O}\left(N^{\frac{4}{3}}N_b^2\right)$ with $N=\left (max(N_i)_{i \in \left \{ 1,\ldots,N_{q_1} \right \}}  \right )^3$.

Indeed, in practical applications the whole two-electron integrals tensor  does not need to be evaluated but  it is rather kept in its tensorized structure to benefit from fast matrix operations when applying it to vectors or matrices. We will discuss in more details an application case in  Section~\ref{sec::application}. An important point when implementing these tensor product evaluations is that the presented method can benefit from BLAS  operations  \cite{blas}. Indeed, \eqref{ig1} can be interpreted as the application of a sequence of products of permutation matrices and block-diagonal matrices (with the same blocks $\mathbf{A}^{(i)}$ along the diagonal) to $\mathbf{M}_{\lrttei,(k)}^{(i)}, k \in  \left\{2,3,4 \right\}$. Matrix-vector products with block-diagonal matrices of this form can be numerically reformulated as matrix-matrix products between one of these diagonal blocks and a matrix composed of the concatenation of subvectors of the original one \cite{igor}. Since matrix-matrix products can be performed more efficiently than matrix-vector products using BLAS routines (namely BLAS-3 instead of BLAS-2), this optimization results in  efficient implementations. In our case, we have even larger concatenation of subvectors because we apply these tensor products to matrices (not simply vectors), resulting in even better exploitation of BLAS-3 routines.

\section{Long-range TEI tensor factorization through Fast Multipoles Methods ({\fmm})}
\label{subsub::fmm}
In what follows, we recall briefly Fast Multipole Methods FMM and its application in our problem after demonstrating that our kernel is \textit{asymptotically smooth}. As  many methods taking advantage of tree space decomposition \cite{Barnes,kernel,GREENGARD1987325}, FMM rely on an important property of usual kernels. We discuss also the similarities and differences between LTEI-{\lrttei} and {\fmm} approaches to approximate $\mathcal{B}^{lr}$.

\begin{definition}[Definition~5.1 in \cite{chaillat:hal-01543919}]
\label{definition_as}
 A kernel $K(., .) \quad: \mathbb{R}^{3} \times \mathbb{R}^{3} \rightarrow \mathbb{R}$ is said to be asymptotically smooth if there exist two constants $c_{1}, c_{2}$ and a singularity degree $\sigma \in \mathbb{N}_{0}$ such that $\forall z \in\left\{\boldsymbol{x}_{l}, \boldsymbol{y}_{l}\right\} \in \mathbb{R} , \forall n \in \mathbb{N}_{0}, \forall \boldsymbol{x} \neq \boldsymbol{y}$,
$$
\left|\frac{\partial^n }{\partial z^n}K(\boldsymbol{x}, \boldsymbol{y})\right| \leq n ! c_{1}\left(c_{2}\left \| \boldsymbol{x}-\boldsymbol{y}    \right \|\right)^{-n-\sigma} .
$$
\end{definition}
Based on this property, efficient hierarchical schemes can be derived for the evaluation of $N$-body problems involving asymptotically smooth kernels.
\subsection{Fast Multipole Methods}
\label{sect_fmm}
Considering two	point clouds with $N_\mathbf{x}, N_\mathbf{Y}$ points, where we denote these clouds by  $ \left \{ \mathbf{x}_n \right \}_{n=1}^{N_\mathbf{X}}, \left \{ \mathbf{y}_n \right \}_{n=1}^{N_\mathbf{Y}} \subset \mathbb{R}^3$ (whose elements are referred to as 3D points or \textit{particles}), $q\hspace{0.1cm}:\hspace{0.1cm}\left \{ \mathbf{y}_n \right \}_{n=1}^{N_\mathbf{Y}}\rightarrow\mathbb{C}$ and an asymptotically smooth function $K\hspace{0.1cm}:\hspace{0.1cm}\left(\mathbb{R}^3\times\mathbb{R}^3\right)\backslash \{\mathbf{0}\}\rightarrow \mathbb{C}$, one may express the associated $N$-body problem as the computation of $p\hspace{0.1cm}:\hspace{0.1cm}\left \{ \mathbf{x}_n \right \}_{n=1}^{N_\mathbf{X}}\rightarrow \mathbb{C}$ such that
\begin{equation}
\label{eq_n_body}
	p(\boldsymbol{x}) := \sum_{\boldsymbol{y}\in \left \{ \mathbf{y}_n \right \}_{n=1}^{N_\mathbf{Y}}}K(\boldsymbol{x},\boldsymbol{y})q(\boldsymbol{y}).
\end{equation}
Computing $p$ naively requires $\mathcal{O}(N^2)$ floating point operations,	with   $N = max(N_\mathbf{X},N_\mathbf{Y})$. Thanks to hierarchical methods, such as \textit{hierarchical matrices} or \textit{Fast Multipole Methods} (FMM), this complexity can be reduced to $\mathcal{O}(N\hspace{0.1cm} logN)$ or even $\mathcal{O}(N)$ (but at the cost of an error we can control). These methods rely on decompositions of $\left \{ \mathbf{x}_n \right \}_{n=1}^{N_\mathbf{X}}$ and $\left \{ \mathbf{y}_n \right \}_{n=1}^{N_\mathbf{Y}}$ into groups of particles whose interaction can be efficiently performed through low-rank matrix approximations if their distance is sufficiently large compared to their radius. For non-oscillatory kernels $K$, FMMs are able to reach the $\mathcal{O}(N)$ complexity, so that they are attractive algorithm for efficiently solving $N$-body problems.

Among the different formulation of FMMs, we seek for particular features needed for our application case. Indeed, the method has to:
\begin{itemize}
    \item perform efficiently (actually in a linear time with respect to the number of points) on highly non-uniform point distributions, such as the three-dimensional Chebyshev grids,
    \item handle the kernel $K$ (which is non-standard kernel in the FMM community),
    \item be able to reach the precision required in realistic chemistry applications.
\end{itemize}

\subsection{Application to two-electron integrals (TEI)}
First, in order to exploit FMM on the two-electron integrals, one has to check that the underlying kernel is asymptotically smooth (see Definition~\ref{definition_as}). In our case, we want the FMM to act on  the long-range kernel  $K(\boldsymbol{x},\boldsymbol{y}), \boldsymbol{x},\boldsymbol{y} \in \mathbb{R}^3$ (see \eqref{eq::kernel}), which leads us to demonstrate the result of Proposition~ \ref{proposition_erfas}.
\begin{prop}
\label{proposition_erfas} $K(\boldsymbol{x},\boldsymbol{y})= \frac{erf(\omega \left \| \boldsymbol{x}-\boldsymbol{y}    \right \| ))}{ \left \| \boldsymbol{x}-\boldsymbol{y}    \right \| }, \boldsymbol{x},\boldsymbol{y} \in \mathbb{R}^3, 0 \leq \omega <  \infty  $ is asymptotically smooth.
\end{prop}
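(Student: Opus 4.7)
The plan is to derive the asymptotic smoothness estimate directly from the integral representation $K(\boldsymbol{x},\boldsymbol{y}) = \frac{2}{\sqrt{\pi}}\int_{0}^{\omega} e^{-s^{2}\left\|\boldsymbol{x}-\boldsymbol{y}\right\|^{2}}\,ds$ established in \eqref{eq::kernel}. For fixed $\boldsymbol{x}\neq\boldsymbol{y}$, differentiation and integration may be interchanged since the integrand and all its partial derivatives are uniformly bounded on the compact interval $[0,\omega]$, so it suffices to bound $\partial^{n}_{z}[e^{-s^{2}\left\|\boldsymbol{x}-\boldsymbol{y}\right\|^{2}}]$ pointwise in $s$ for $z\in\{x_{l},y_{l}\}$ and then integrate over $s$. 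By symmetry the cases $z=x_{l}$ and $z=y_{l}$ produce the same bound up to signs, so I would focus on $z=x_{l}$.

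Writing $r=\left\|\boldsymbol{x}-\boldsymbol{y}\right\|$ and factoring $e^{-s^{2}r^{2}} = e^{-s^{2}(x_{l}-y_{l})^{2}}\cdot e^{-s^{2}\sum_{m\neq l}(x_{m}-y_{m})^{2}}$, only the first factor depends on $x_{l}$, and its $n$-th derivative is expressible through the physicists' Hermite polynomial $H_{n}$:
\[
\partial_{x_{l}}^{n}[e^{-s^{2}(x_{l}-y_{l})^{2}}] = (-s)^{n}\, H_{n}(s(x_{l}-y_{l}))\, e^{-s^{2}(x_{l}-y_{l})^{2}}.
\]
Cramér's inequality $|H_{n}(u)|\,e^{-u^{2}/2}\leq \kappa\sqrt{2^{n}\, n!}$ (for a universal constant $\kappa$) then yields
\[
\left| \partial_{x_{l}}^{n}\, e^{-s^{2}r^{2}} \right| \leq \kappa\, s^{n}\, 2^{n/2}\sqrt{n!}\, e^{-s^{2}\alpha},
\]
where $\alpha=(x_{l}-y_{l})^{2}/2 + \sum_{m\neq l}(x_{m}-y_{m})^{2}\geq r^{2}/2$, so the last exponential is dominated by $e^{-s^{2}r^{2}/2}$.

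Substituting into the integral representation of $K$ and extending the $s$-integration to $[0,\infty)$, the remaining Gaussian moment evaluates to $\int_{0}^{\infty} s^{n}\, e^{-s^{2}r^{2}/2}\, ds = \tfrac{1}{2}(r^{2}/2)^{-(n+1)/2}\,\Gamma((n+1)/2)$, producing the desired $r^{-(n+1)}$ factor together with a $\Gamma((n+1)/2)$. The Legendre duplication formula $\Gamma((n+1)/2)\,\Gamma((n+2)/2) = 2^{-n}\sqrt{\pi}\, n!$, combined with Stirling's estimate, implies that $\sqrt{n!}\,\Gamma((n+1)/2)$ is bounded by $n!/2^{n/2}$ up to a polynomial-in-$n$ factor, which can in turn be absorbed by slightly shrinking $c_{2}$. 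Assembling these pieces yields the bound $|\partial_{z}^{n} K(\boldsymbol{x},\boldsymbol{y})|\leq n!\, c_{1}\,(c_{2}\, r)^{-n-1}$ uniformly in $\boldsymbol{x}\neq\boldsymbol{y}$, which is exactly the asymptotic smoothness condition of Definition~\ref{definition_as} with singularity degree $\sigma=1$.

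The main obstacle is the careful asymptotic matching of the combinatorial factor $\sqrt{n!}\,\Gamma((n+1)/2)$ that naturally appears from combining the Hermite estimate with the Gaussian moment, against the $n!$ required by the definition; every other step reduces to a standard chain of Gaussian integral estimates and absorption of polynomial-in-$n$ prefactors into the constant $c_{2}$.
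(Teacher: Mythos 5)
Your argument is correct, but it follows a genuinely different route from the paper's. The paper also starts from the representation $K(\boldsymbol{x},\boldsymbol{y})=\tfrac{2}{\sqrt{\pi}}\int_0^{\omega}e^{-s^2\left\|\boldsymbol{x}-\boldsymbol{y}\right\|^2}\,ds$ and writes out the $n$-th derivative explicitly (essentially the expanded Hermite polynomial), but then disposes of the estimate by comparing the truncated integral over $[0,\omega]$ with the full integral over $[0,\infty)$, identifying the latter with $\partial_{x_1}^n\left(1/\left\|\boldsymbol{x}-\boldsymbol{y}\right\|\right)$, and invoking the known asymptotic smoothness of the Coulomb kernel from the hierarchical-matrix literature. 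You instead prove the bound from scratch: Cram\'er's inequality $|H_n(u)|e^{-u^2/2}\le\kappa\sqrt{2^n n!}$ gives a pointwise, single-signed majorant of the integrand, after which extending the $s$-integration to $[0,\infty)$ is legitimate by monotonicity, and the Gaussian moment plus the Legendre duplication formula converts $\sqrt{n!}\,\Gamma\bigl(\tfrac{n+1}{2}\bigr)$ into $n!$ up to a geometric factor absorbed into $c_2$. What your approach buys is (i) explicit, self-contained constants $c_1$, $c_2$ and the singularity degree $\sigma=1$ in Definition~\ref{definition_as}, and (ii) a clean justification of the step that is thinnest in the paper: since the integrand (a Hermite polynomial times a Gaussian) changes sign in $s$, the inequality $\bigl|\int_0^{\omega}\bigr|\le\bigl|\int_0^{\infty}\bigr|$ asserted there does not follow from the parity remark given, whereas your absolute-value majorant sidesteps the sign issue entirely. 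The paper's route is shorter and reuses a standard cited result; yours is longer but quantitative and does not outsource the key estimate. The only detail worth spelling out in a final write-up is the Stirling comparison showing $\sqrt{n!}\,\Gamma\bigl(\tfrac{n+1}{2}\bigr)\le C\,n!\,2^{-n/2}$, which you correctly identify as the crux; it holds with a constant uniform in $n$, so the argument closes.
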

\begin{proof}
Given the function $K(\boldsymbol{x},\boldsymbol{y})= \frac{erf(\omega \left \| \boldsymbol{x}-\boldsymbol{y}    \right \| ))}{ \left \| \boldsymbol{x}-\boldsymbol{y}    \right \| }, \boldsymbol{x},\boldsymbol{y} \in \mathbb{R}^3, 0 \leq \omega <  \infty  $, we want to evaluate the function's partial derivative upper bound  with respect to $x_1 \in \mathbb{R}$ such that $\forall n \in \mathbb{N}_{0}, \forall \boldsymbol{x} \neq \boldsymbol{y}$, the $n$th derivative of $K(\boldsymbol{x},\boldsymbol{y})$ with respect to $x_1$ writes
\begin{eqnarray}
\label{eq::nth}
    \frac{\partial^n }{\partial x_1^n}K(\boldsymbol{x},\boldsymbol{y})&=&   \frac{\partial^n }{\partial x_1^n}(\frac{2}{\sqrt \pi}\int_{0}^{\omega}exp\left (-s^{2} \left \| \boldsymbol{x}-\boldsymbol{y}    \right \|^{2}  \right )ds)
    \\ &=&\frac{2^{n+1}}{\sqrt{\pi}}n!\int_{0}^{\omega}\sum_{k=0}^{\left [ \frac{n}{2} \right ]}\frac{(-1)^{n-2k}2^{-2k}(x_1-y_1)^{n-2k}}{k!(n-2k)!}s^{2n-2k}exp\left (-s^2 \left \| \boldsymbol{x}-\boldsymbol{y}    \right \|^2  \right )ds.
\end{eqnarray}
If $n$ is even, the term under the integral in   \eqref{eq::nth} is positive. Otherwise, it is either negative or positive. Therefore, \eqref{eq::nth} can be bounded by the absolute value of the $n$th derivative of the Coulomb potential that writes
\begin{equation}
\label{eq::inequality}
     \frac{\partial^n }{\partial x_1^n}\frac{1}{\left \| \boldsymbol{x}-\boldsymbol{y} \right \|}= \frac{2^{n+1}}{\sqrt{\pi}}n!\int_{0}^{\infty }\sum_{k=0}^{\left [ \frac{n}{2} \right ]} \frac{(-1)^{n-2k}2^{-2k}\left (x_1-y_1\right )^{n-2k}}{k!(n-2k)!}s^{2n-2k}exp\left (-s^2 \left \| \boldsymbol{x}-\boldsymbol{y} ^2  \right \|  \right )ds,
\end{equation}
and 
\begin{equation}
      \frac{\partial^n }{\partial x_1^n}K(\boldsymbol{x},\boldsymbol{y})\leq \left |\frac{\partial^n }{\partial x_1^n}\frac{1}{\left \| \boldsymbol{x}-\boldsymbol{y} \right \|}  \right |.
\end{equation}
Since $\frac{1}{\left \| \boldsymbol{x}-\boldsymbol{y} \right \|}$ is asymptotically smooth \cite{Hierarchical,Wolfgang}, this shows that $K(\boldsymbol{x},\boldsymbol{y})$ is also asymptotically smooth.\\
This proof applies for all the other directions.
\end{proof}
Hence, thanks to the asymptotically smooth behavior of $K$, FMM can be applied to this kernel and the far field contribution of the $N$-body problem can be efficiently approximated, especially by exploiting polynomial interpolation.
Similar to the previous sections,  we  consider the  finite six-dimensional integral $\mathcal{B}^{lr}(\mu,\nu,\kappa,\lambda)$ defined in \eqref{eq::compact} on a truncated computational box $[-b,b]^3 \times [-b,b]^3, b \in \mathbb{R}$ as follows
\begin{eqnarray}\label{TEI3}
      \mathcal{B}^{lr}(\mu,\nu,\kappa,\lambda)&=&\int_{[-b,b]^{3}}\int_{[-b,b]^{3}}g_{\mu\nu}(\boldsymbol{x})K(\boldsymbol{x},\boldsymbol{y}) g_{\kappa\lambda}(\boldsymbol{y})d\boldsymbol{x}d\boldsymbol{y}.
\end{eqnarray}
Instead of applying Gaussian quadrature rule on the kernel $K(\boldsymbol{x},\boldsymbol{y})$ as we did in the previous Section~\ref{sec::LRTTEI}, we use  Chebyshev polynomials evaluated in a six-dimensional Chebyshev grid,  the low-rank approximation of  $K(\boldsymbol{x},\boldsymbol{y})$   can be written, as explained in \cite{kernel}, as follows
\begin{equation}\label{sec}
    K(\boldsymbol{x},\boldsymbol{y})=\sum_{i=1}^{N}L(\boldsymbol{x}_i,\boldsymbol{x})\underbrace{\sum_{j=1}^{N}K(\boldsymbol{x}_i,\boldsymbol{y}_j)L(\boldsymbol{y}_j,\boldsymbol{y})}_{N\textit{-body problem as in Eq. \ref{eq_n_body}}},
\end{equation}
where $N$ is the total number of Chebyshev interpolation points (we use the same $N$ as the one introduced in Section~\ref{sec::LRTTEI}), $\boldsymbol{x}_i=(\boldsymbol{x}_{i_1},\boldsymbol{x}_{i_2},\boldsymbol{x}_{i_3})$ and $\boldsymbol{y}_i=(\boldsymbol{y}_{i_1},\boldsymbol{y}_{i_2},\boldsymbol{y}_{i_3})$, for $i \in  \left \{ 1,2,\dots,N  \right \}$, are 3-vectors of Chebyshev points with $i_{l}, j_{l} \in\{1, \ldots, N \},l \in \left \{ 1,2,3 \right \}$. We also have
\begin{equation}
    L (\boldsymbol{x}_i, \boldsymbol{x})=L^{(1)} \left(\boldsymbol{x}_{i_1}, \boldsymbol{x}_{1}\right) L^{(2)}  \left(\boldsymbol{x}_{i_2}, \boldsymbol{x}_{2}\right) L^{(3)} \left(\boldsymbol{x}_{i_3}, \boldsymbol{x}_{3}\right).
\end{equation}
\begin{equation}
\label{eq::cheblagr}
         L^{(l)} \left(\boldsymbol{x}_{i_l}, \boldsymbol{x}_{l}\right) =\frac{1}{N^{\frac{1}{3}}}+\frac{2}{N^{\frac{1}{3}}}\sum_{k=2}^{N^{\frac{1}{3}}} T_k(\boldsymbol{x}_{i_l})T_k(\boldsymbol{x}_{l}),~l \in \left \{ 1,2,3 \right \}.
\end{equation}
One may notice that the equation \eqref{eq::cheblagr} appears as a simple reformulation of the interpolation presented in Definition~\eqref{def::Chebyspoints}, combining the equation~\eqref{eq::chebpoly} and the equation~\eqref{eq::chebcoeff}. The important point here is that we want the kernel to explicitly appear (evaluated on Chebyshev interpolation nodes) in the expression, so that a FMM algorithm can be derived, following \cite{kernel,igor}.    
Chebyshev polynomials are used here as interpolation basis and were already defined in Definition~\ref{def::Chebyspoints}. The long-range two-elctron integrals in \eqref{eq::compact} can be written as follows
\begin{eqnarray}\label{eq_interp_FMM}
      \mathcal{B}_{\fmm}^{lr}(\mu,\nu,\kappa,\lambda)&=&\sum_{i=1}^{N }\underbrace{ \int_{[-b,b]^3}g_{\mu\nu}(\boldsymbol{x})L (\boldsymbol{x}_i,\boldsymbol{x})d\boldsymbol{x}}_{\mathbf{Z}_{\mu\nu}(1,\boldsymbol{x}_i)}
\left (\sum_{j=1}^{N }K(\boldsymbol{x}_i,\boldsymbol{y}_j)
\underbrace{\int_{[-b,b]^3}g_{\kappa\lambda}(\boldsymbol{y})L(\boldsymbol{y}_j,\boldsymbol{y})d\boldsymbol{y}}_{\mathbf{Z}_{\kappa\lambda}(\boldsymbol{y}_j,1)}  \right )
\\&=&\sum_{i =1}^{N}\mathbf{Z}_{\mu\nu}(1,\boldsymbol{x}_i)\left ( \sum_{ j=1}^{N} \nonumber K(\boldsymbol{x}_i,\boldsymbol{y}_j)\mathbf{Z}_{\kappa\lambda}(\boldsymbol{y}_j,1) \right ).
\end{eqnarray}
Equation \eqref{eq_interp_FMM} can be written in matrix formulation as follows for fixed   $\mu,\nu,\kappa,\lambda \in   \left \{ 1,\ldots,N_b \right \}$
\begin{equation}\label{factoz}
     \mathcal{B}_{\fmm}^{lr}(\mu,\nu,\kappa,\lambda)= \mathbf{Z}_{\mu\nu}\mathbf{K}\mathbf{Z}_{\kappa\lambda}^\top,
\mathbf{Z}_{\mu\nu},\mathbf{Z}_{\kappa\lambda}  \in \mathbb{R}^{1 \times N},\mathbf{K} \in \mathbb{R}^{N \times N}
\end{equation}
with $\mathbf{K}(\boldsymbol{x}_i,\boldsymbol{y}_j)= \frac{erf(\omega \left \| \boldsymbol{x}_i-\boldsymbol{y}_j    \right \| ))}{ \left \| \boldsymbol{x}_i-\boldsymbol{y}_j    \right \| }, i,j \in  \{1, \ldots, N\}.$
The last term into parenthesis in \eqref{eq_interp_FMM}
corresponds to an $N$-body problem as in Equation (\ref{eq_n_body}), whose evaluation can be performed in $\mathcal{O}(N )$ FLOPS using FMM. One may notice that the FMM accuracy can be chosen accordingly to the interpolation error in equation (\ref{eq_interp_FMM}). For all $\mu,\nu,\kappa,\lambda \in   \left \{ 1,\ldots,N_b \right \}$, the factorized representation of the mode-(1,2) matricization of the  fourth-order tensor $\mathcal{B}_{\fmm}^{lr}$ \eqref{eq_interp_FMM} is then given by 
\begin{equation}\label{fmimi}
    \mathbf{B}_{\fmm}^{lr}=\mathbf{M}_{FMM}\mathbf{K}\mathbf{M}_{FMM}^\top \in \mathbb{R}^{ N_b^2 \times N_b^2 } ,~\mathbf{M}_{FMM} \in \mathbb{R}^{ N_b^2 \times N  },
\end{equation}
with $\mathbf{M}_{FMM}[ \mu\nu,:]=\mathbf{Z}_{\mu\nu} \in \mathbb{R}^{1 \times N}$, for $\mu,\nu \in  \left \{1,\cdots,N_b\right \}$. 
Hence, the entire computation of \eqref{fmimi}  requires the application of the FMM method to  each column  of $\mathbf{M}_{FMM}$, the overall evaluation complexity of FMM becomes $\mathcal{O}(N\times N_b^2)$ to compute $\mathbf{K}\mathbf{M}_{FMM}^\top$. 
\\
\begin{remarknn}
The FMM formulation we opted for relies on precomputations (at a linear cost with respect to the number of particles) for the construction of low-rank approximations (see Section~\ref{sect_fmm}) that depends only on the particle distribution. Because the interpolation points are the same for each $\mathbf{Z}_{\kappa\lambda}({\boldsymbol{y}_j},1)$, our particle distributions do not change, so these precomputations can be performed only once and reused for each FMM application.
\end{remarknn}
\subsection{Similarities and differences between {LTEI-\lrttei} and {\fmm} approaches}
In table \ref{table::sim} we summarize the approximated expressions of \eqref{eq::compact} obtained  through {LTEI-\lrttei} and  {\fmm} approaches.
\begin{table}[H]
\caption {Factorization of TEI } \label{table::sim} 
\begin{center}
\scalebox{0.8}{%
\begin{tabular}{|c|c|c|} 
  \hline
Approaches & {LTEI-\lrttei} & {\fmm} \\
\hline
Distribution & $N$ Chebyshev points & $N$ Chebyshev points\\
  \hline
Entry-wise evaluation: & 
      $\mathcal{B}_{LTEI-\lrttei}^{lr}(\mu,\nu,\kappa,\lambda)
      :=\frac{\omega}{\sqrt \pi} \sum_{i=1}^{N_{q_1}} \left (w_{i} \sum_{  j =1}^{I_{\mu\nu} }\sum_{  j' =1}^{I_{\kappa\lambda}}c_jc_{j'} \mathbf{F}^{(i)}(j,j')  \right ).$  & $\mathcal{B}_{\fmm}^{lr}(\mu,\nu,\kappa,\lambda):= \mathbf{Z}_{\mu\nu}\mathbf{K}\mathbf{Z}_{\kappa\lambda}^\top.$ \\ 
 \hline
 Factorized representation: & 
       $\mathbf{B}_{LTEI-\lrttei}^{lr}:=\frac{\omega}{\sqrt \pi}\sum_{i=1}^{N_{q_1}}w_i\mathbf{M}_{\lrttei}^{(i)}\left ({\otimes}_{l=1}^{3} \mathbf{A}^{(i)}  \right )\mathbf{M}_{\lrttei}^{(i) \top} \in \mathbb{R}^{ N_b^2 \times N_b^2 }.$
   & 
$\mathbf{B}_{\fmm}^{lr}:=\mathbf{M}_{FMM}\mathbf{K}\mathbf{M}_{FMM}^\top \in \mathbb{R}^{ N_b^2 \times N_b^2 }.$
 \\
  \hline
\end{tabular}}
\end{center}
\end{table}
We discuss here the differences and similarities between both approaches. On one hand, for {\lrttei} approach,  we start by applying a change of variable to the long-range kernel $K(\boldsymbol{x},\boldsymbol{y})$ \eqref{eq::kernel} in order to remove the term $ \frac{1}{ \left \| \boldsymbol{x}-\boldsymbol{y}    \right \|}$, then we apply one-dimensional Gaussian quadrature (see \eqref{TEI2}) with $N_{q_1}$ quadrature points. In addition to that, we apply two-dimensional Chebyshev interpolation which yields to obtaining a tensorized form obtained in \eqref{approx}, \eqref{eq::tensors}. Thus,  we need to evaluate $\mathbf{M}_{\lrttei,max} \in \mathbb{R}^{N_b^2 \times N }$ which involves the evaluation of one-dimensional integrals over $[-b,b]$. On the other hand,
when applying interpolation directly on the original kernel $K$, one ends up with a three dimensional N-body problem that can be efficiently handled using FMM approach.
Thus, we need to compute  $\mathbf{M}_{FMM} \in \mathbb{R}^{ N_b^2 \times N}$ which involves also the evaluation of one-dimensional integrals over $[-b,b]$. The similarities between both approaches consist in  employing Chebyshev interpolation with the same total number of interpolation points $N$.
\begin{remarknn}
One may mention that for low level optimisations (such as explicit formula for the polynomials or fast FFT-based assembling of the interpolation coefficients), we opted for slightly different interpolation nodes in the two methods. Indeed, \textit{Gauss-Chebyshev-Lobatto} nodes are used for {LTEI-\lrttei} method while \textit{Chebyshev} nodes are used for {\fmm}. These last points are defined as (showing only $\boldsymbol{x}_{i_ l}$ expression)
\begin{equation}
  \boldsymbol{x}_{i_ l}=\cos \left(\frac{2 k-1}{2 N^{\frac{1}{3}}} \pi\right), k \in \left \{ 1,\cdots,N^{\frac{1}{3}}\right \}, l \in \left \{ 1,2,3 \right \}.
\end{equation}
However, for both cases, the same number of interpolation nodes  is considered for a given targeted precision, $N^{\frac{1}{3}}$ per direction,  so that this detail does not impact the complexity estimates and the comparison between them.
\end{remarknn}

\section{Application to electronic structure calculations}
\label{sec::application}
We describe in what follows an application case for the two-electron integrals tensor using LTEI-{\lrttei} as well as {\fmm}. In quantum chemistry, one of the main steps in many methods is the construction of the Coulomb matrix \cite{doi:10.1021/ct501128u,doi:10.1137/19M1252855,Resolutions}. We define in the following the long-range Coulomb matrix in the molecular orbital basis $\phi_i$ that are represented (approximately) as \cite{hamiltonian}
\begin{equation}
\label{eq::LCAO}
    \phi_{i}=\sum_{\mu=1}^{N_{b}}q_{i\mu}g_{\mu}, i \in \left \{ 1,\cdots,N_{orb} \right \},
\end{equation}
with $q_{i\mu}$ being   the coefficients  of the linear combinations over the basis functions $\left\{g_{\mu}\right\}_{1 \leq \mu \leq N_{b}}$. In this molecular orbital  basis, the Coulomb  long-range integral reads
\begin{eqnarray}
\label{eq::coulombos}
    \mathbf{J}^{lr}(i,j)&=&\int_{\mathbb{R}^{3}}\int_{\mathbb{R}^{3}}K(\boldsymbol{x},\boldsymbol{y})  \sum_{i=1}^{N_{orb}}\left | \phi_{i}(\boldsymbol{x}) \right |^{2} \sum_{j=1}^{N_{orb}}\left | \phi_{j}(\boldsymbol{y}) \right |^{2} d\boldsymbol{x}d\boldsymbol{y}
    \\&=& \sum_{\mu,\nu,\kappa,\lambda=1}^{N_{b}}\sum_{i,j=1}^{N_{orb}}q_{i\mu}q_{i\nu}\left (\int_{\mathbb{R}^{3}}\int_{\mathbb{R}^{3}}K(\boldsymbol{x},\boldsymbol{y})  g_{\mu}(\boldsymbol{x})g_{\nu}(\boldsymbol{x})g_{\kappa}(\boldsymbol{y})g_{\lambda}(\boldsymbol{y}) q_{j\kappa}q_{j\lambda}d\boldsymbol{x}d\boldsymbol{y}  \right ).
\end{eqnarray}
Let us define the rectangular matrix $\mathbf{Q}$   $\in \mathbb{R}^{ N_{orb} \times  N_{b}^2}$ with entries $Q(i,\mu\nu)=q_{i\mu}q_{i\nu}$ such that  $\mathbf{J}^{lr}$ 
 writes in matrix notation as
\begin{equation}\label{Coul}
    \mathbf{J}^{lr}=\mathbf{Q}\mathbf{B}^{lr}\mathbf{Q}^\top  \in \mathbb{R}^{N_{orb} \times N_{orb}},
\end{equation}
where $\mathbf{B}^{lr} \in \mathbb{R}^{N_b^2 \times N_b^2}$ is the   mode-(1,2) matricization of $\mathcal{B}^{lr} $.  A naive approach to evaluate \eqref{Coul}, given $\omega$, the matrix $\mathbf{Q} \in \mathbb{R}^{N_{orb} \times  N_b^2}$, and the long-range two-electron integrals $\mathbf{B}^{lr}$, is to first compute the matrix product  $\mathbf{B}^{lr}\mathbf{Q}^\top$ and then perform $\mathbf{Q}\left (\mathbf{B}^{lr}\mathbf{Q}^\top  \right )$. The last has an arithmetic cost of $\mathcal{O}(N_b^4N_{orb})$. Given a truncated computational box $ \left [ -b,b \right ]^3$, one can use the factorized structure $\mathbf{B}^{lr}_{LTEI-\lrttei}$ defined in \eqref{eq::tensors} or  $\mathbf{B}^{(lr)}_{\fmm}$ defined in \eqref{fmimi} to evaluate \eqref{Coul} efficiently. Given the two approximation approaches ({LTEI-\lrttei} and \fmm), we arrive at the following matrix representations
\begin{equation}
\label{eq::coulapprox}
    \mathbf{J}_{LTEI-\lrttei}^{lr}=\frac{\omega}{\sqrt \pi}\sum_{i=1}^{N_{q_1}}w_i\left( \mathbf{Q}\mathbf{M}_{\lrttei}^{(i)} \right)\left ({\otimes}_{l=1}^{3} \mathbf{A}^{(i)}  \right ) \left( \mathbf{Q}\mathbf{M}_{\lrttei}^{(i)} \right)^\top \text{ and } \mathbf{J}_{\fmm}^{lr}=\left( \mathbf{Q}\mathbf{M}_{FMM} \right)\mathbf{K}\left( \mathbf{Q}\mathbf{M}_{FMM} \right)^\top.
\end{equation}
We present in Table~\ref{table::tabb} an overview of  the storage complexities obtained  through {LTEI-\lrttei} method as well as {\fmm} method to evaluate  entries of the  long-range two-electron integrals tensor and its application to evaluate the long-range Coulomb matrix defined in  \eqref{Coul}.
\begin {table}[H]
\caption {Storage complexity comparison  } \label{tab:title} 
 \begin{center}
\scalebox{1.}{%
\begin{tabular}{|l|c|r|r|r|r|r|}
  \hline
 & {LTEI-\lrttei} & {\fmm}\\
 \hline
Element-wise TEI  & $\mathcal{O}(N^{\frac{1}{3}}N_{q_1}(N^{\frac{1}{3}}+I_{\mu\nu}+I_{\kappa\lambda}))$ & $\mathcal{O}(N )$\\
\hline
Application (\ref{Coul}) & $\mathcal{O}(N^{\frac{2}{3}}(N_{orb}N^{\frac{1}{3}}+N_{q_1}))$ & $\mathcal{O}(N(1+N_{orb}))$\\
\hline
\end{tabular}
}
\label{table::tabb}
\end{center}
\end {table}
The storage complexity of the element-wise evaluation for {\fmm} is a consequence of \eqref{factoz}, i.e. linear with regard to the number of interpolation points $N$. The storage complexities for the evaluation of \eqref{Coul} are obtained as follows. For LTEI-{\lrttei} approach 
\begin{enumerate}
    \item Instead of forming all matrices $\mathbf{Q}\mathbf{M^{(i)}_{\lrttei }}$ for $i  \in\left \{ 1..N_{q_1} \right \}$, we form only (as explained in Section~\ref{sec::fasteval}) $\mathbf{Q}\mathbf{M_{\lrttei,max}}$  that requires $\mathcal{O}(N_{orb}N)$ storage.
    \item As discussed before, we keep $\left ({\otimes}_{l=1}^{3} \mathbf{A}^{(i)}  \right )$ in tensorized form. Hence, forming all coefficient matrices   $\mathbf{A}^{(i)}$ of size $N_i \times N_i$, for $i  \in\left \{ 1..N_{q_1} \right \}$  requires  $\mathcal{O}(\sum_{i=1}^{N_{q_1}}N_i^2) \sim   \mathcal{O}(N_{q_1}N^{\frac{2}{3}})$ storage. 
\end{enumerate}
So in total, the storage complexity is $\mathcal{O}(N^{\frac{2}{3}}(N_{orb}N^{\frac{1}{3}}+N_{q_1}))$. For {\fmm} approach
\begin{enumerate}
    \item Forming $\mathbf{Q}\mathbf{M_{FMM}}$  requires  $\mathcal{O}(N_{orb}\times N)$ of storage.
    \item Forming $\mathbf{K}$ requires $\mathcal{O}(N)$ of storage. 
\end{enumerate}
So in total, the storage complexity is $\mathcal{O}(N(1+N_{orb}))$. According to Table.\ref{table::tabb}, the storage demand for this evaluation seems lower (in order) for {LTEI-\lrttei} compared to \fmm. However, we cannot  conclude on the best method in terms of  storage complexity since  $N_{q_1}$ and $N^{\frac{1}{3}}$ depend on the value of $\omega$ and the chosen computational box $[-b,b]^3$. This motivates numerical comparisons between the two approaches for different parameters (see Section~\ref{sec::numerical}).

\section{Compression techniques for the factorized long-range TEI tensor}
\label{sub::LR}
One of the main precomputation steps required  to obtain  the factorized representation of $\mathbf{B}^{lr}$ is based on the evaluation of $\mathbf{M}_{\lrttei,max} \in \mathbb{R}^{N_b^2 \times N  }$ (resp.  $\mathbf{M}_{FMM} \in \mathbb{R}^{ N_b^2 \times N}$) matrix. This step tends to be  expensive  in terms of both computational and memory requirements for molecules of moderate size, as we consider in our experiments.  In this section we address this problem by discussing    different approaches to compress $\mathbf{M}_{\lrttei,max}$, some of which can be applied to $\mathbf{M}_{FMM}$.
\subsection{Compression by using low-rank methods}
\label{KR}
In many  cases, the matrix $\mathbf{M}_{\lrttei,max}$  is  numerically low-rank  as we will discuss in the numerical experiments section (see Figure~\ref{fig:low_rank}). It is possible to reduce its dimensions by exploiting its low rank structure. We recall the \textit{screening} technique \cite{meth} which consists in   simply discarding "negligible" pairs of Gaussian type basis functions as  explained in \ref{subsub:pres}. Low rank approximation methods such as truncated SVD \cite{chaillat} can be also applied directly on $\mathbf{M}_{\lrttei,max}$ to further reduce its dimensions. We introduce in this section a different compression method that exploits the  khatri-rao products and associated properties.  Let $\mathbf{\tilde{W}}^{(i,l)} \in \mathbb{R}^{N_b^2I_{\mu\nu,max} \times N_i}$ be defined by
\begin{equation}
\mathbf{\tilde{W}}^{(i,l)}=\left[\phantom{\begin{matrix}\mathbf{\tilde{W}}^{(i,l)}_{\mu\nu}\\
\mathbf{0}\end{matrix}}
\right. \hspace{-1em}
 \begin{matrix}
\mathbf{\tilde{W}}^{(i,l)}_{\mu\nu}\\
\mathbf{0}
\end{matrix} 
\hspace{-1.5em}
\left.\phantom{\begin{matrix}\mathbf{\tilde{W}}^{(i,l)}_{\mu\nu}\\
\mathbf{0} \end{matrix}}\right]
\begin{tabular}{l}
$\left.\lefteqn{\phantom{\begin{matrix}\mathbf{W}^{(i,l)}_{\mu\nu} \end{matrix}}}\right\}I_{\mu\nu} \times N_i$\\
$\left.\lefteqn{\phantom{\begin{matrix}0 \end{matrix}}} \right\} \left (I_{\mu\nu,max}-I_{\mu\nu}  \right ) \times N_i$, $I_{\mu\nu,max}=max(I_{\mu\nu})_{1\leq \mu,\nu\leq N_b}.$
\end{tabular}
\end{equation}
Its low rank $R_{i,l}$  approximation  can be written as:
\begin{equation}
\label{eq::SVD}
    \mathbf{\tilde{W}}^{(i,l)} \approx \mathbf{U}^{(i,l)}\mathbf{V}^{(i,l)\top},
\end{equation}
where  $\mathbf{U}^{(i,l)} \in \mathbb{R}^{N_b^2I_{\mu\nu,max} \times R_{i,l}}$ and $\mathbf{V}^{(i,l)\top} \in \mathbb{R}^{R_{i,l} \times N_i}$. Given the decomposition \eqref{eq::SVD}, Proposition~\ref{prop::relations} is used to obtain the following expression
\begin{small}
\begin{align}
     \left ( \diamond_{l=1}^3(\mathbf{\tilde{W}}^{(i,l)}) \right )\left ({\otimes}_{l=1}^{3} \mathbf{A}^{(i)}  \right )\left ( \ast_{l=1}^3(\mathbf{\tilde{W}}^{(i,l)})^\top  \right )&= \left ( \diamond_{l=1}^3(\mathbf{U}^{(i,l)}\mathbf{V}^{(i,l)}) \right )\left (\mathbf{A}^{(i)}\otimes \mathbf{A}^{(i)}\otimes \mathbf{A}^{(i)}  \right )\left ( \ast_{l=1}^3(\mathbf{U}^{(i,l)}\mathbf{V}^{(i,l)})^\top  \right )\\&= \left ( \diamond_{l=1}^3\mathbf{U}^{(i,l)} \right )(\otimes_{l=1}^3\mathbf{V}^{(i,l)})\left (\mathbf{A}^{(i)}\otimes \mathbf{A}^{(i)}\otimes \mathbf{A}^{(i)}  \right )(\otimes_{l=1}^3\mathbf{V}^{(i,l)\top} )\left ( \ast_{l=1}^3(\mathbf{U}^{(i,l)})^\top  \right ) \\ &=\left ( \diamond_{l=1}^3\mathbf{U}^{(i,l)} \right )\otimes_{l=1}^3(\mathbf{V}^{(i,l)}\mathbf{A}^{(i)} \mathbf{V}^{(i,l)\top} )\left ( \ast_{l=1}^3(\mathbf{U}^{(i,l)})^\top  \right ).
\end{align}
\end{small}
 By replacing the low rank approximation of the matrix $\mathbf{\tilde{W}}^{(i,l)}$ in the expression of $\mathbf{B}^{lr}_{LTEI-\lrttei}$ in equation~\eqref{eq::tensors}, we obtain 
 \begin{equation}
\label{eq::approxepsilon}
    \mathbf{B}_{LTEI-\lrttei}^{lr} \approx \frac{\omega}{\sqrt \pi}\sum_{i=1}^{N_{q_1}}w_i\mathbf{\tilde{U}}^{(i)} \otimes_{l=1}^3(\mathbf{V}^{(i,l)}\mathbf{A}^{(i)} \mathbf{V}^{(i,l)\top}) \mathbf{\tilde{U}}^{(i)\top},
\end{equation}
where $\mathbf{\tilde{U}}^{(i)}=\sum_{j=1}^{I_{\mu\nu,max}}c_j\mathcal{U}^{(i)}[j,:,:] \in \mathbb{R}^{N_b^2 \times \prod_{l=1}^{3}R_{i,l}}$ with $\mathcal{U}^{(i)} \in \mathbb{R}^{I_{\mu\nu,max} \times N_b^2 \times \prod_{l=1}^{3}R_{i,l}}$ the tensorization of $\left ( \diamond_{l=1}^3\mathbf{U}^{(i,l)} \right ) \in \mathbb{R}^{I_{\mu\nu,max}N_b^2 \times \prod_{l=1}^{3}R_{i,l}}$.
In practice, we compute only   the matrix $\mathbf{\tilde{U}}^{(i)} \in \mathbb{R}^{N_b^2 \times max(\prod_{l=1}^{3}R_{i,l})_{i \in  \left\{ 1,\cdots,N_{q_1} \right\}}}$  with the maximum rank $max(\prod_{l=1}^{3}R_{i,l})_{i \in  \left\{ 1,\cdots,N_{q_1} \right\}}$ as   discussed in Section~\ref{sec::fasteval}.
\subsection{Adaptive approach  for the choice of the integration domain $[-b,b]$}
\label{subsub::adap}
We discuss now an adaptive approach for the choice of the integration domain $[-b,b]$. For each pair of Gaussian functions, we identify its numerical support $[-b,b]$. We cluster together these numerical supports to obtain overall $N_{partitions}$ supports, $\left \{ [-b_s,b_s] \right \}_{s \in \left \{1,\cdots,N_{partitions} \right \}}$. For each pair of Gaussian functions $(\mu,\nu), \mu,\nu \in  \left \{ 1,\cdots,N_b \right \}$, we proceed as follows: Given  the general Gaussian product rule 
(Definition~\ref{def::Gaussrule}), the product of two primitive Gaussian type functions $g_{\mu\nu}^{(j)}(\boldsymbol{x}_{l})= g_{\mu}^{(j_1)}(\boldsymbol{x}_l)g_{\nu}^{(j_2)}(\boldsymbol{x}_l)$ is
\begin{equation}\label{gauss_prod}
   g_{\mu\nu}^{(j)}(\boldsymbol{x}_{l})= g_{\mu}^{(j_1)}(\boldsymbol{x}_l)g_{\nu}^{(j_2)}(\boldsymbol{x}_l)=(\boldsymbol{x}_l-\mathbf{r}_{l})^{p_{\mu_l}}
(\boldsymbol{x}_l-\mathbf{r}'_{l})^{p_{\nu_l}}exp\left (-\frac{\mu_{j_1}\nu_{j_2}}{\mu_{j_1}+\nu_{j_2}} (\mathbf{r}_l-\mathbf{r}'_l)^{2}  \right ) \sigma_{\mu_{j_1}\nu_{j_2}}(\boldsymbol{x}_l),
\end{equation}
where 
\begin{equation}
    \sigma_{\mu_{j_1}\nu_{j_2}}(\boldsymbol{x}_l)
=exp\left (-(\mu_{j_1}+\nu_{j_2}) (\boldsymbol{x}_l-\frac{\mu_{j_1} \mathbf{r}_l+\nu_{j_2} \mathbf{r}'_l}{\mu_{j_1}+\nu_{j_2}})^{2}  \right ), j_1 \in \left \{ 1,\cdots,I_{\mu} \right \}, j_2 \left \{ 1,\cdots,I_{\nu} \right \},
\end{equation}
and  (see \eqref{eq::primi})
\begin{equation}
    g_{\mu}^{(j_1)}(\boldsymbol{x}_l)=(\boldsymbol{x}_{l}-\mathbf{r}_{l})^{p_{\mu_l}}exp\left (-\mu_{j_1} (\boldsymbol{x}_l-\mathbf{r}_l)^{2}  \right ) \text{ and }  g_{\nu}^{(j_2)}(\boldsymbol{x}_l)=(\boldsymbol{x}_l-\mathbf{r}'_{l})^{p_{\nu_l}} exp\left (-\nu_{j_2} (\boldsymbol{x}_l-\mathbf{r}'_l)^{2}  \right ),
\end{equation}
with $j=(j_1,j_2) \in \left \{ 1,..,I_{\mu\nu} \right \}$, $I_{\mu\nu}=I_{\mu}I_{\nu}$,   l $\in \left \{1,2,3\right \}$, $\mu,\nu \in \left \{1,..,N_b\right \}.$ The numerical support $[-b,b]$ is chosen according to  a cutoff threshold $\tau_{adaptive} >0$ such that 
\begin{equation}
  \sigma_{\mu_{j_1}\nu_{j_2}}(\boldsymbol{x}_l) \leqslant  \tau_{adaptive}, l \in \left \{1,2,3  \right \}.
\end{equation}
To illustrate this adaptive approach, for a given pair $(\mu,\nu)$,
   we represent  in Figure~\ref{fig:cluster} (left) the exponential terms
   $\sigma_{\mu_{j_1}\nu_{j_2}}(\boldsymbol{x}_1)$ in the expression  \eqref{gauss_prod} for  $j \in \left \{ 1,\cdots,I_{\mu\nu} \right \}$ with respect to the first direction (l=1). The exponential decay of these functions enables us to limit the range of the numerical grid according to a chosen  threshold $\tau_{adaptive}$. Through this adaptive technique, Figure~\ref{fig:cluster} (right)  illustrates the distribution of the numerical support (dimension $b$). Each bar represents the percentage of Gaussian function pairs $(\mu,\nu)$ associated to the exponential terms  $\sigma_{\mu_{j_1}\nu_{j_2}}(\boldsymbol{x}_l)$ lying in the range $[-b,b]$. It is showed that the distribution depends on the molecule choice as well as the number of basis functions $N_b$.
 \begin{figure}[H]
\centering
\begin{subfigure}{.5\textwidth}
  \centering
  \includegraphics[scale=0.08]{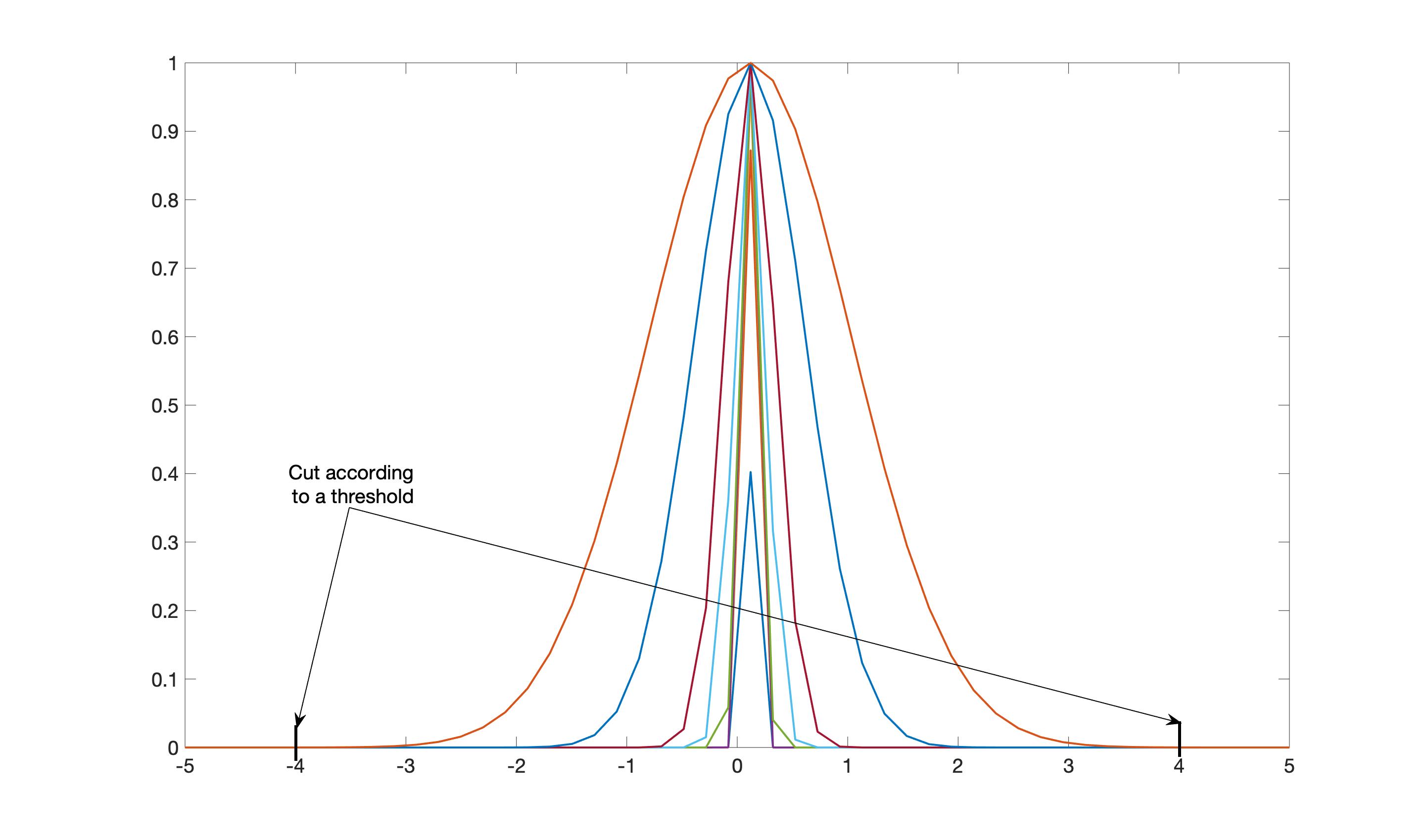}
\end{subfigure}%
\begin{subfigure}{0.5 \textwidth}
  \centering
  \includegraphics[scale=0.08]{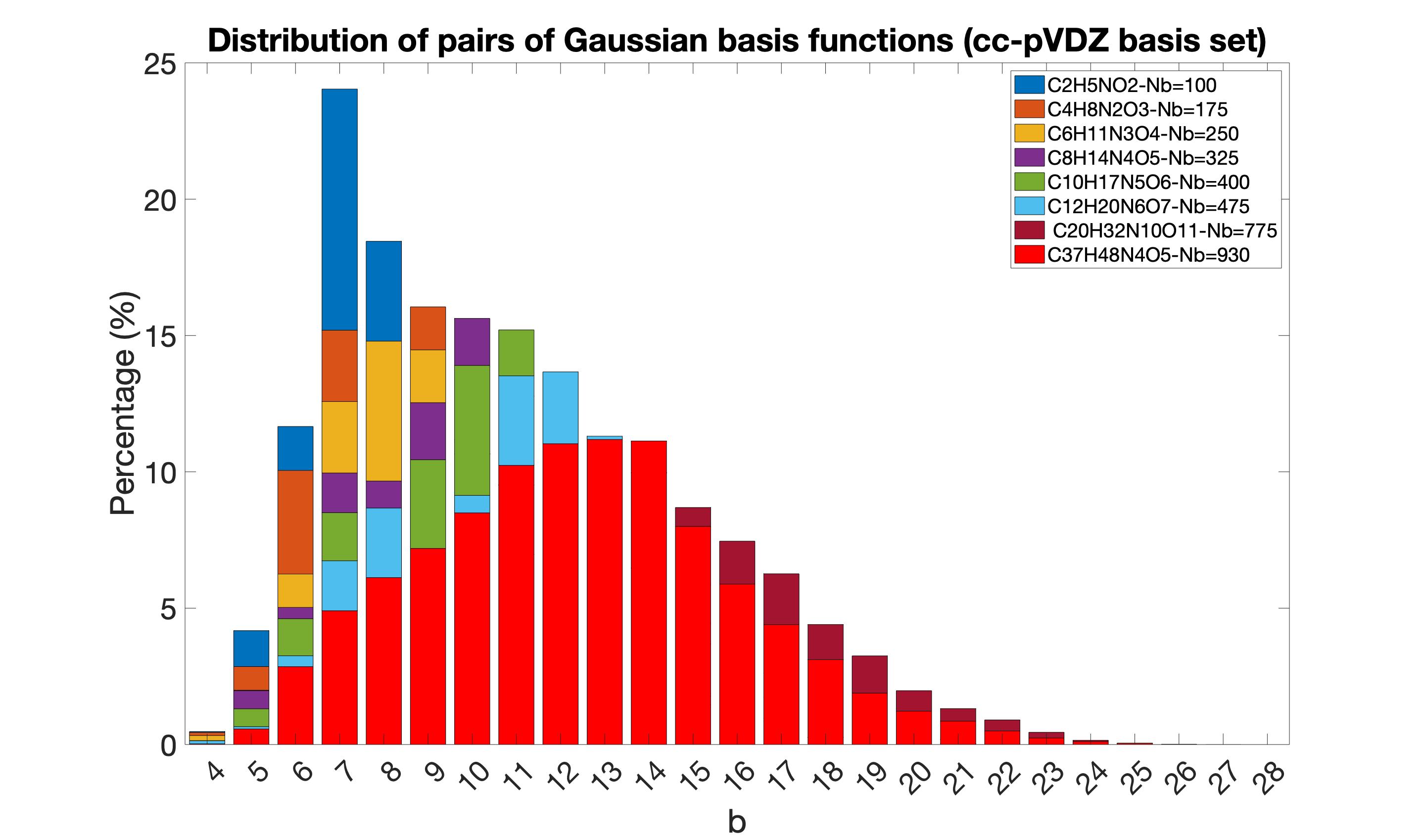}
\end{subfigure}
\caption{(Left) Identifying numerical supports of different pairs of Gaussian functions. Each color in the plot represents the   exponential term  $\sigma_{\mu_{j_1}\nu_{j_2}}(\boldsymbol{x}_1)$. Here the selected numerical support is $[-4,4]$. (Right) Distribution of numerical supports $[-b,b]$ for a given threshold $\tau_{adaptive}=10^{-20}$, the x-axis shows the dimension b of the box, the  y-axis shows the percentage of the Gaussian function pairs. }
\label{fig:cluster}
\end{figure}
The advantages of using this approach is  that there is no need to fix in advance the size of the numerical box $b$ since it depends on the Gaussian functions. Moreover, it is possible to reduce the storage demand since instead of storing the matrix $\mathbf{M}_{\lrttei,max} \in  \mathbb{R}^{N_b^2 \times N}$, smaller matrices of sizes $N_{b,s}^2\times N_s$ are stored, where 
$N_{b,s}^2$ are the pairs of Gaussian functions associated to  the integration domain $[-b_s,b_s]$ and $N_{s}$ is the maximum number   of Chebyshev interpolation points in the interval $[-b_s,b_s]$.
We must point out that   by using  this adaptive method, multiple tensor contraction calculations need to be performed  to compute \eqref{eq::coulapprox} which will depend on the number of partitions $\mathcal{P}_s$. This can be costly if we consider a sequential algorithm.  However, this adaptive approach offers a possibility to parallelize the evaluation of  \eqref{eq::coulapprox}.
\subsection{Compression by using Screening}
\label{subsub:pres}
It is possible to further reduce the dimensions of $\mathbf{M}_{\lrttei,max}$ by exploiting the properties of the Gaussian type basis functions. In fact, given the product of two-primitive Gaussians  introduced in \eqref{gauss_prod}, we notice that $g_{\mu\nu}^{(j)}(\boldsymbol{x}_{l})=g_{\nu\mu}^{(j)}(\boldsymbol{x}_{l})$, for $j  \left\{ 1,\cdots,I_{\mu\nu} \right\}, \mu,\nu \in \left\{ 1,\cdots, N_b \right\}$ and $l \in \left\{ 1,2,3\right\}$.  Therefore,  there are only $\frac{N_b}{2}(N_b+1)$ choices for $N_b^2$ combinations of $\mu$ and $\nu$. We also apply  the screening technique that is  often used by chemists to reduce the computational cost of the evaluation of integrals \cite{meth}. From the Gaussian product rule \eqref{gauss_prod}, the higher the exponent of a primitive Gaussian, the faster the products with primitives from other centers decay with distance and the sooner they become negligible. Therefore, for large enough molecules, it is possible to discard a consistent number of pairs of primitive Gaussians which is illustrated in the numerical experiment section in Figure~\ref{fig:pairs}. In practice, we discard the  primitive pair that satisfies the following condition for a given  threshold $\tau_{screening}$
\begin{equation}
   exp\left( -\frac{\mu_{j_1}\nu_{j_2}}{\mu_{j_1}+\nu_{j_2}}\sum_{l=1}^{3}(\mathbf{r}_l-\mathbf{r}'_l)^{2} \right) \leq \tau_{screening}.
\end{equation}

\section{Numerical results}
\label{sec::numerical}
In this section, we evaluate numerically our novel method LTEI-\lrttei \footnote{https://github.com/sbadred/LTEI\_TA.jl.git} by using a  prototype implementation in Julia language version 1.5.3. We also compare it with {\fmm} method   using \textit{defmm} library \cite{igor}. The $\textit{defmm}$ library is a C++ code\footnote{https://github.com/IChollet/defmm} that is particularly well-suited for the two-electronic integrals context since it implements various important features with $\mathcal{O}(N)$ complexity on non-oscillatory kernels in both precomputation and application cost. More precisely, $\textit{defmm}$ is
\begin{itemize}
    
    \item \textit{kernel-independent}, meaning that the user has to provide only a routine evaluating $K(\boldsymbol{x},\boldsymbol{y})$ to use the code and the handling of $erf$ function can be added at minimal implementation effort,
    \item \textit{adaptive}, meaning that the algorithm automatically adapts to the potential non-uniformity of the particle distribution. Similar  performance was observed for \textit{defmm} using non-oscillatory kernels applied on uniform and highly non-uniform distributions \cite{igor} (such as our tensorized Chebyshev grids),
    \item \textit{convergent} for any asymptotically smooth kernel, including our kernel $K(\boldsymbol{x},\boldsymbol{y})$ (see Proposition~\ref{proposition_erfas}), as proven in \cite{chollet:hal-03563005}.
\end{itemize}
An example of a call to  \textit{defmm}  library  is provided in  \ref{Appen::B}.
 \textit{defmm} is compiled using the intel C++ compiler (version 19.1.2.254) and FFTW3 (since \textit{defmm} relies on FFTs for the far field compression/evaluation). We remind that the evaluation algorithm in {LTEI-\lrttei}, which is written in Julia, is based on matrix-matrix products, performed with optimized BLAS operations (see Section \ref{subsubsec::first})  for the dense linear algebra computations. Hence, the effect of the programming language choice has a negligible impact for {LTEI-\lrttei}. This justifies the comparison between c++ calls (\textit{defmm}) and our implementation of {LTEI-\lrttei} in Julia. We are also aware that results presented in the following correspond to prototypes in which we simply link \textit{defmm} with outputs from our Julia code, regardless of further possible optimizations. All the calculations  are carried out using Cleps cluster from Inria, Paris, France. This machine has 4 partitions. We use cpu-homogen partition  which contains 20 nodes with hyper-threading such that we can allocate a maximum of 64 logical cores per node (Intel(R) Xeon(R) Silver 4214 CPU @ 2.20GHz) with a  memory of 6GB per core.
We start always by the data initialization step which consists  in  reading input files generated from \textit{quantum package}. These files contain molecular properties: number of atoms, number of basis functions,  coordinates of the nuclei, basis set parameters. For all molecules we use the “cc-pVDZ” Gaussian basis set \cite{basis}.\raggedbottom
\subsection{Approximation  error and computational cost}
The following numerical results present the approximation errors with respect to  different parameters $\omega$, $N$, and $N_{q_1}$. We start by providing the approximation error for the element-wise evaluation of the long-range two-electron integrals tensor and then we provide the numerical error convergence obtained for the evaluation of the long-range Coulomb matrix as defined in  \eqref{Coul} using  both methods:  {LTEI-\lrttei} and {\fmm}.
\subsubsection{Approximation error}
First we provide convergence results of LTEI-\lrttei~method for the evaluation of the long-range two-electron integrals given in equation~\eqref{approx}. For the following numerical tests, we consider small sized molecules : $NH_3$ and $CO_2$, where we represent the mean relative error of $10^3$ randomly chosen elements  from the tensor $\mathcal{B}^{lr}.$ On the left of the Figure \ref{fig::both},   the maximum number of Chebyshev interpolation points $N$ is fixed  while on the right of the Figure~\ref{fig::both} the number of quadrature points $N_{q_1}$ is fixed.
\begin{figure}[H]
    \centering
    \includegraphics[scale=0.25]{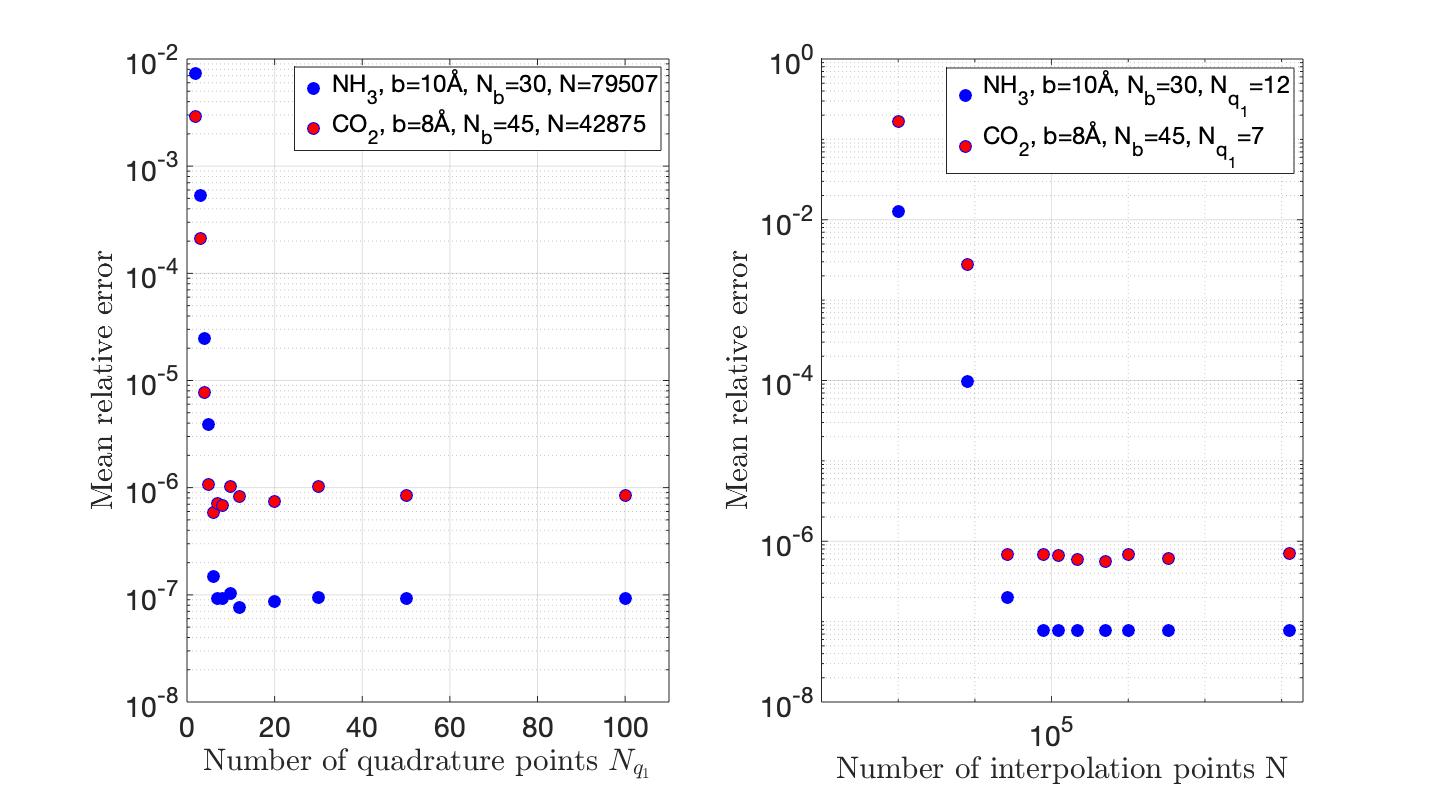}
    \caption{Approximation error of the long-range two-electron integrals using LTEI-{\lrttei}, $\omega=0.5$.}
     \label{fig::both}
\end{figure}
\begin{figure}[H]
\centering
\begin{subfigure}{.5\textwidth}
  \centering
  \includegraphics[width=.9\linewidth, height=120px]{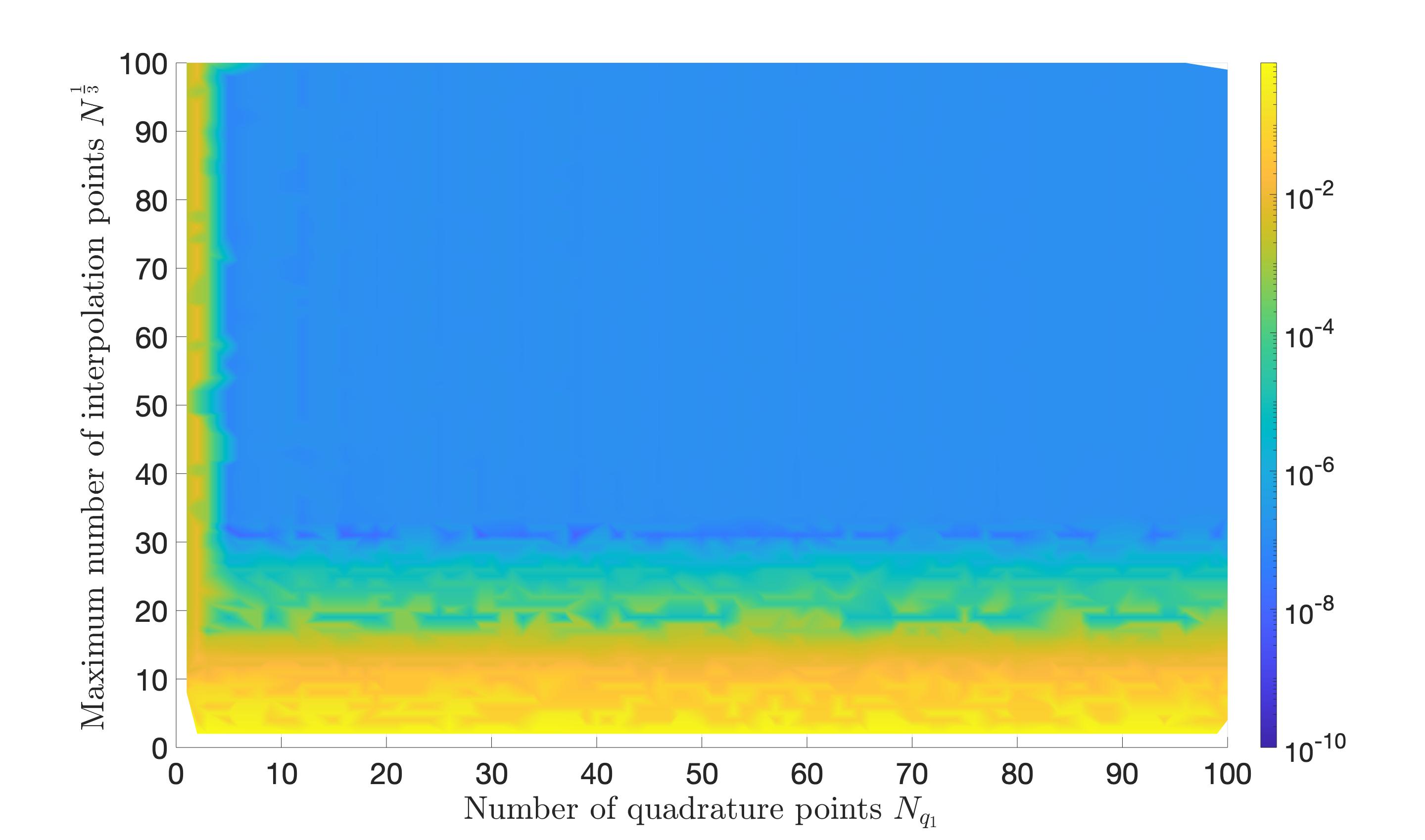}
   \subcaption{$\omega=0.5$}
\end{subfigure}%
\begin{subfigure}{.5 \textwidth}
  \centering
  \includegraphics[width=.9\linewidth, height=120px]{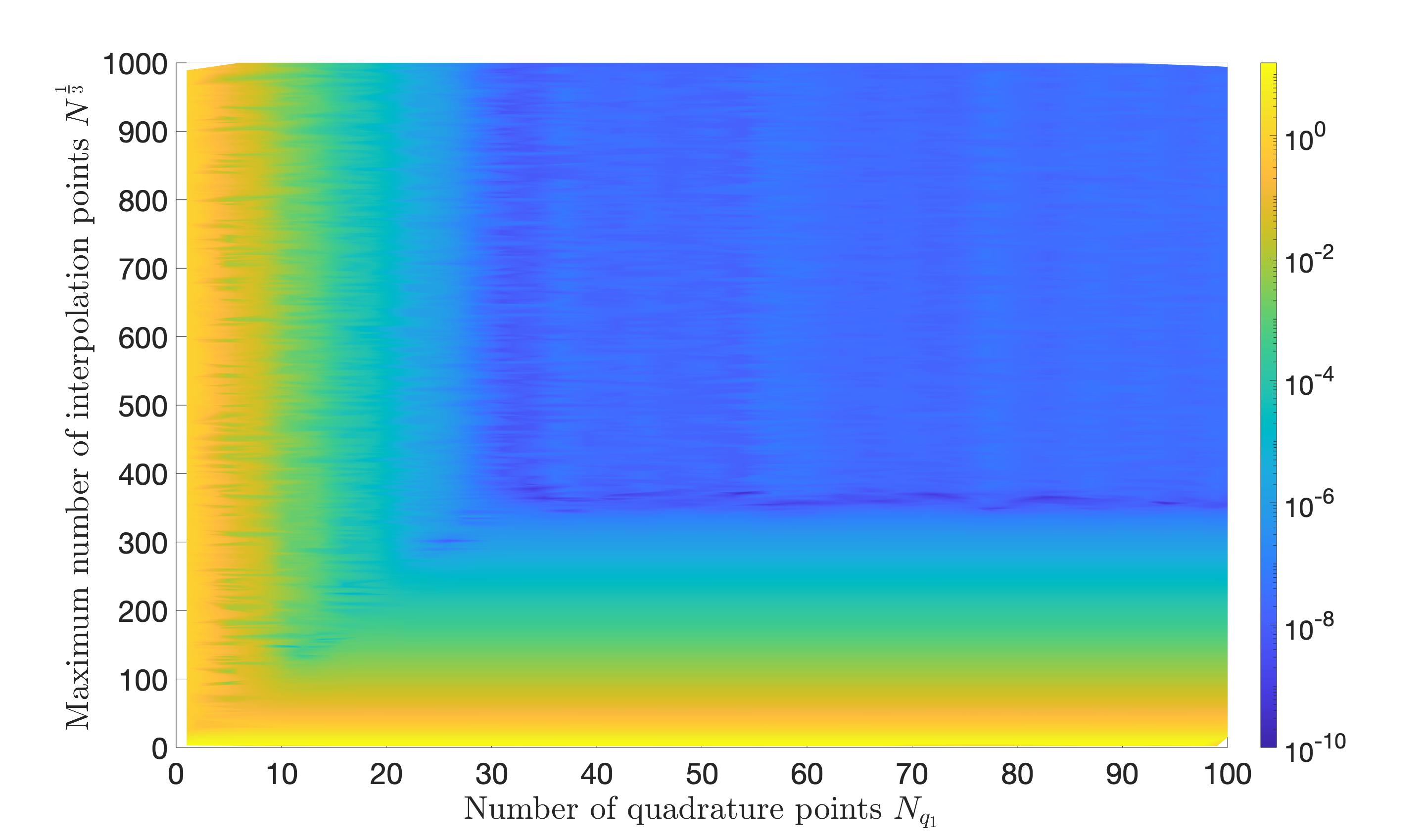}
   \subcaption{$\omega=5$}
\end{subfigure}
\caption{ Approximation error of the element-wise evaluation of the two-electron integrals   \eqref{approx} with respect to ($\#$interpolation points per direction, $\#$quadrature points ) $\equiv$ ($N^{\frac{1}{3}}$,$N_{q_1}$) for the optimal accuracy using NH3 molecule in the cc-pVDZ basis set for different values of $\omega$. The colorbar shows the mean relative approximation error.}
\label{fig::cross}
\end{figure}
In Figure~\ref{fig::both}, with fixed $\omega=0.5$, we notice the fast convergence of the relative error towards the value  of $1e^{-7}$ for both subfigures such that  the analytical results, generated from \textit{quantum package}, and numerical results are in reasonably good agreement  for both molecules. We note  that the stagnation of the error is a consequence of the approximations used (Chebyshev interpolation and Gaussian quadrature rule), hence in order to optimize our method for a desired accuracy, we need to find a good compromise between the parameters $N^{\frac{1}{3}}$ and $N_{q_1}$, as shown in Figure~\ref{fig::cross}. Indeed, we note that in Figure~\ref{fig::cross}, for each number of interpolation points, there is a number of quadrature points  that allows to reach a small relative error (up to $1e^{-10}$). 
One may also notice that the minimal error is constrained by the choice of $b$, i.e. of the integration box, since the support of the primitive Gaussians are truncated.
\begin{figure}[H]
\begin{center}
\includegraphics[width=0.9\linewidth, height=200px]{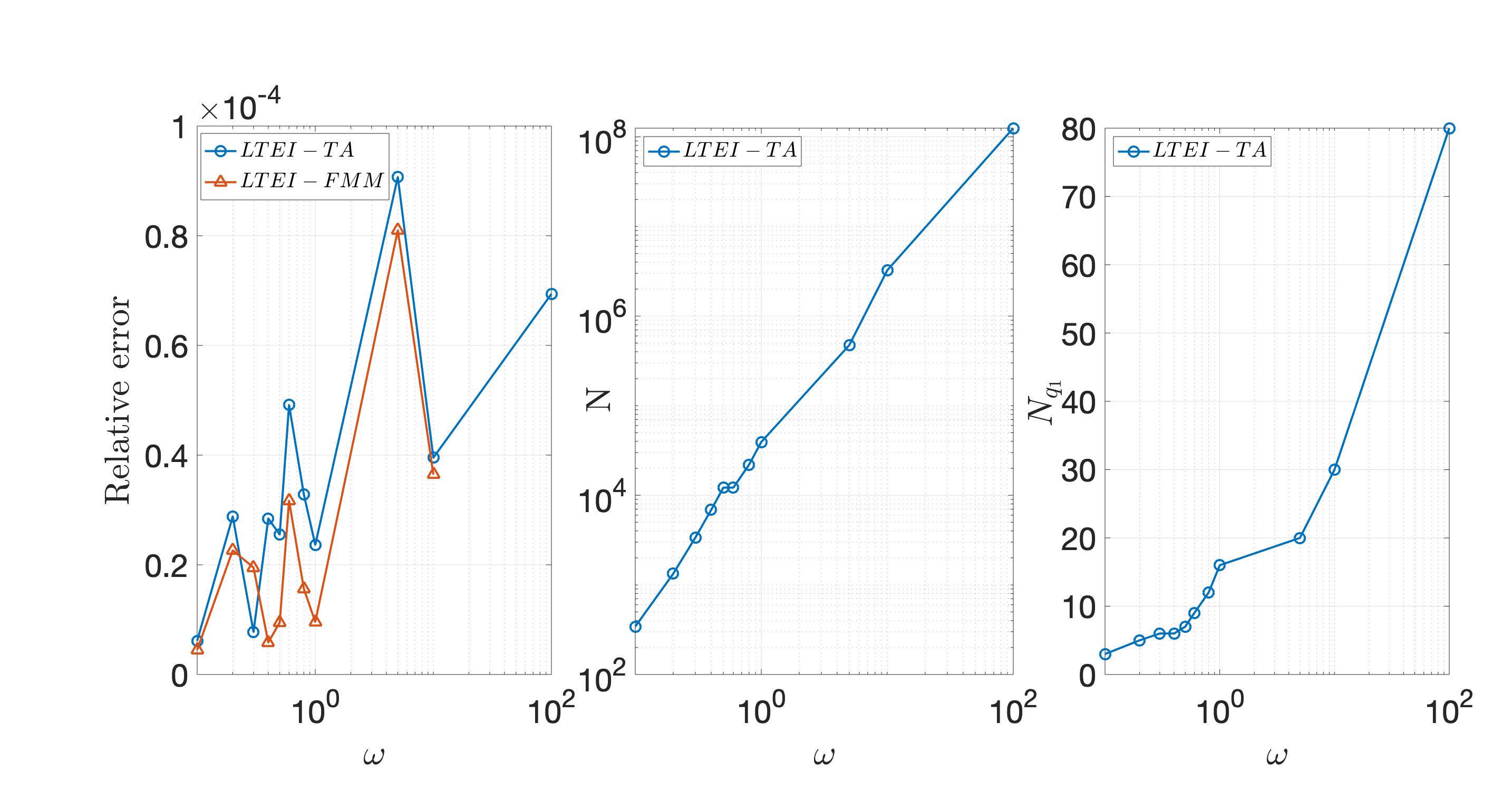}
\caption{(Leftmost figure) The approximation error of the element-wise evaluation of the two-electron integrals with respect to $\omega$ for both approaches: LTEI-{\lrttei} and {\fmm}. (Middle figure) The number of interpolation points $N$ needed to reach the imposed accuracy (relative error smaller than $1e^{-4}$) with respect to $\omega$. (Rightmost figure) The number of quadrature points $N_{q_1}$ needed to reach the imposed accuracy (relative error smaller than $1e^{-4}$) with respect to $\omega$.}
\label{fig::ele}
\end{center}
\end{figure}
Figure~ \ref{fig::ele} displays  the number of interpolation points $N$ (middle figure) and the number of quadrature points $N_{q_1}$ (rightmost figure) with respect to $\omega$ for computing a single entry of the  long-range two-electron integrals tensor  $\mathcal{B}^{lr}$  through LTEI-\lrttei~and {\fmm} approaches. The entry $\mathcal{B}^{lr}(\mu,\nu,\kappa,\lambda)$ is chosen randomly and we impose that  the relative error is smaller than $10^{-4}$, where the relative error is  defined as  $\frac{\left | \mathcal{B}^{lr}(\mu,\nu,\kappa,\lambda)-\mathcal{B}_{LTEI-\lrttei}^{lr}(\mu,\nu,\kappa,\lambda)  \right | }{ \left |\mathcal{B}^{lr}(\mu,\nu,\kappa,\lambda)  \right |}$ for LTEI-\lrttei, and as $\frac{\left | \mathcal{B}^{lr}(\mu,\nu,\kappa,\lambda)-\mathcal{B}_{\fmm}^{lr}(\mu,\nu,\kappa,\lambda)  \right | }{ \left |\mathcal{B}^{lr}(\mu,\nu,\kappa,\lambda)  \right |}$ for {\fmm}, respectively.
We observe  that the number of  interpolation points $N$ and the number of quadrature points $N_{q_1}$ needed to reach the desired accuracy grow with $\omega$, as it can be seen in the  middle and rightmost figures. This is explained by the fact that when $\omega \rightarrow \infty$, {LTEI-\lrttei} needs to approximate a nearly singular kernel, which increases its cost.  
 The leftmost figure also shows that the  accuracy  of {LTEI-\lrttei} and {\fmm}  for the evaluation of an element of  $\mathcal{B}^{lr}$ is comparable for the same number of interpolation points $N$. This is because both approaches are based on Chebyshev interpolation. 
 We note that the quadrature in {LTEI-\lrttei} is chosen to be at least as precise as the interpolation and the FMM error is controlled by a parameter \cite{igor} whose value is practically calibrated so that this error equals the numerical interpolation. Both methods thus lead to the expected accuracy.
 \begin{figure}[H]
\begin{center}
\includegraphics[scale=0.15]{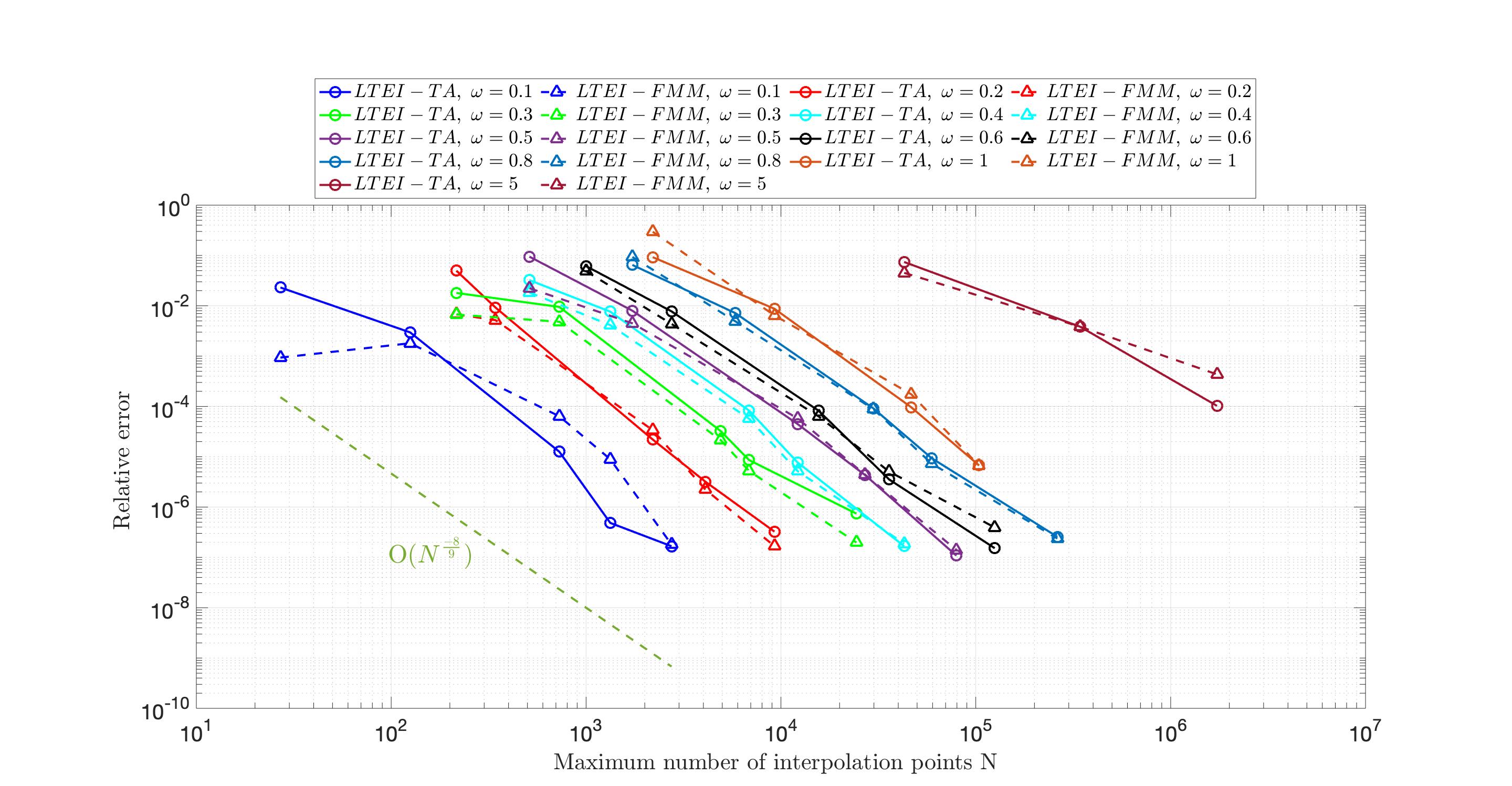}
\caption{Approximation error of the evaluation of the long-range Coulomb matrix  using LTEI-{\lrttei} and {\fmm} with respect   to the number of interpolation points $N$ for various values of $\omega$: convergence rate estimation. These calculations were carried for the glycine molecule with $N_b=100$ in the cc-pVDZ basis set.}
\label{fig::tendancy}
\end{center}
\end{figure} 
Figure~ \ref{fig::tendancy} considers the evaluation of the long-range Coulomb matrix   using LTEI-{\lrttei} and {\fmm} approaches as described in \eqref{eq::coulapprox}. It  displays the relative error   with  respect to the number of interpolation points $N$ for different values of $\omega$, where the relative error    of {LTEI-\lrttei}   (resp. {\fmm} ) is $\frac{  \left  \| \mathbf{J}^{lr} -   \mathbf{J}_{LTEI-\lrttei}^{lr}     \right \|_2   }{ \left \| \mathbf{J}^{lr}   \right \|_2}$ (resp. $\frac{  \left  \| \mathbf{J}^{lr} -   \mathbf{J}_{\fmm}^{lr}     \right \|_2   }{ \left \| \mathbf{J}^{lr}   \right \|_2}$). We note  that we were not able to  evaluate theoretically the  convergence rate of this evaluation with respect to the  number of interpolation points  $N$. We observe, however, that the numerical error seems to have an almost linear-scaling in the 3D tensorized interpolations grid size $N$ for small values  $\omega \in (0,1)$. However, this   scaling is lost for larger $\omega$. Indeed, we expect our method to be far less efficient for very large $\omega$ since the underlying kernel tends to the (singular) Coulomb one  when  $\omega\rightarrow +\infty$.

\subsubsection{Computational cost}
We first discuss   the execution time  required  for the evaluation of  an element  of the long-range tensor $\mathcal{B}^{lr}$, as displayed in Figure \ref{fig::timefmm} . The computational  complexity of this evaluation is of order $\mathcal{O}(N_{q_1}N^{\frac{1}{3}}I_{\kappa \lambda}(N^{\frac{1}{3}}+I_{\mu\nu}))$ as discussed in Section~\ref{subsubsection:ewf}. For small values of $N_{q_1}$ and a few number of interpolations points $N^{\frac{1}{3}}$, we   obtain linear scaling  with respect to $N^{\frac{1}{3}}$ as shown in Figure~ \ref{fig::timefmm}. This is explained by the fact that the term $\mathcal{I_{\kappa\lambda}I_{\mu\nu}}$ dominates the overall complexity for small $\omega$. However, when $\omega$ increases, a quadratic complexity is observed with respect to $N^{\frac{1}{3}}$, which  correponds to  $\mathcal{O}(N_{q_1}N^{\frac{1}{3}}I_{\kappa \lambda}(N^{\frac{1}{3}}+I_{\mu\nu}))$. We also compare LTEI-{\lrttei} with {\fmm}  and  with a naive numerical computation such that the two-electron integrals are computed with an integration over    $N\times N \times N$ tensorized three dimensional Cartesian grids. We notice here that the {\fmm}  approach has a linear scaling with regards to the number of interpolation points $N$ as expected. We conclude that for the element-wise evaluation, LTEI-{\lrttei}  is the most efficient method.
\begin{figure}[H]
\begin{center}
\includegraphics[scale=0.15]{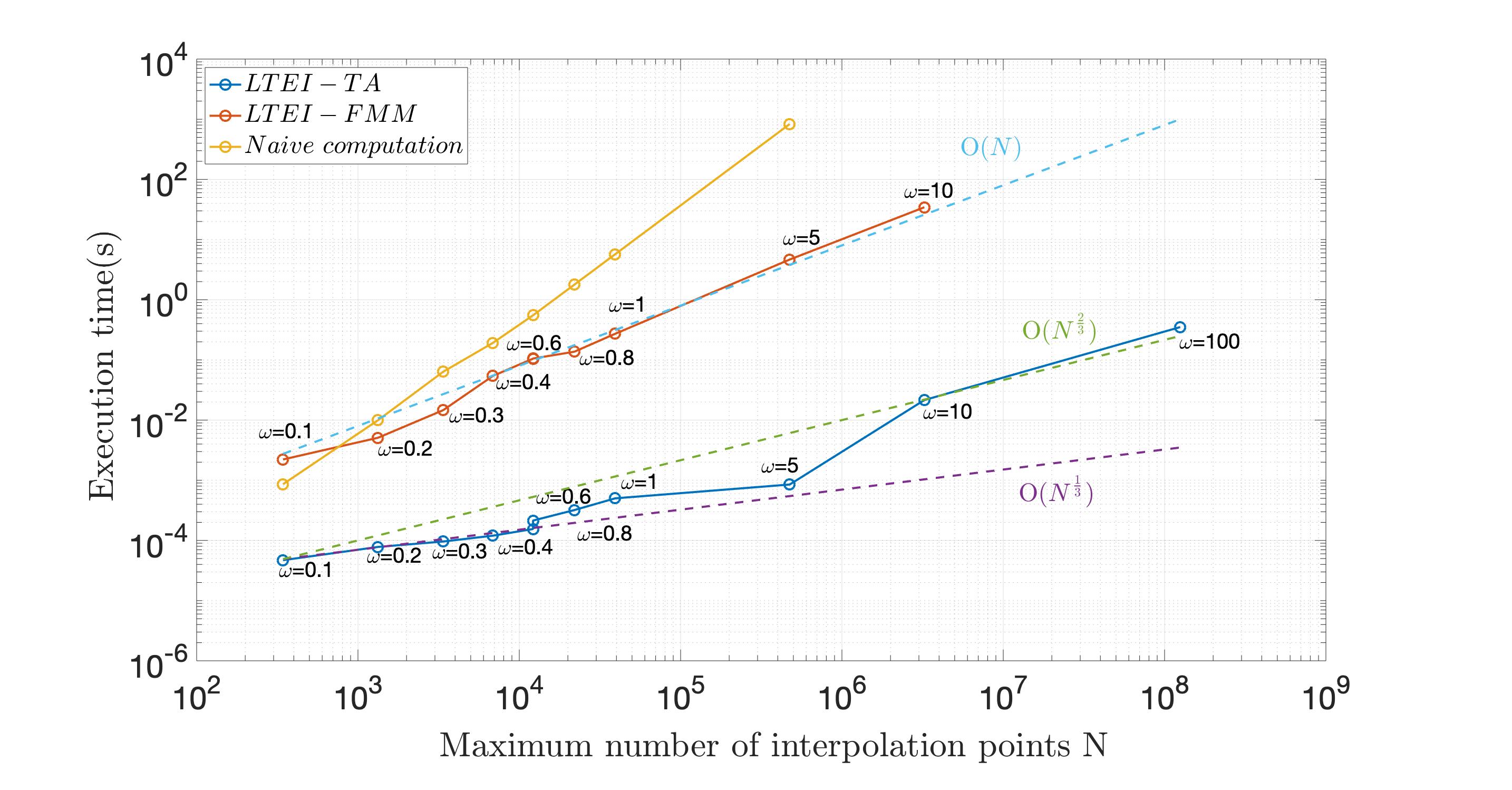} 
\caption{Computational time versus the maximum number of interpolation points $N$ for different values of $\omega$ for the evaluation of the long-range two-electron integrals with relative error smaller than  $\leqslant 1e^{-4}$.}
\label{fig::timefmm}
\end{center}
\end{figure} 
\begin{figure}[H]
\begin{center}
\includegraphics[scale=0.15]{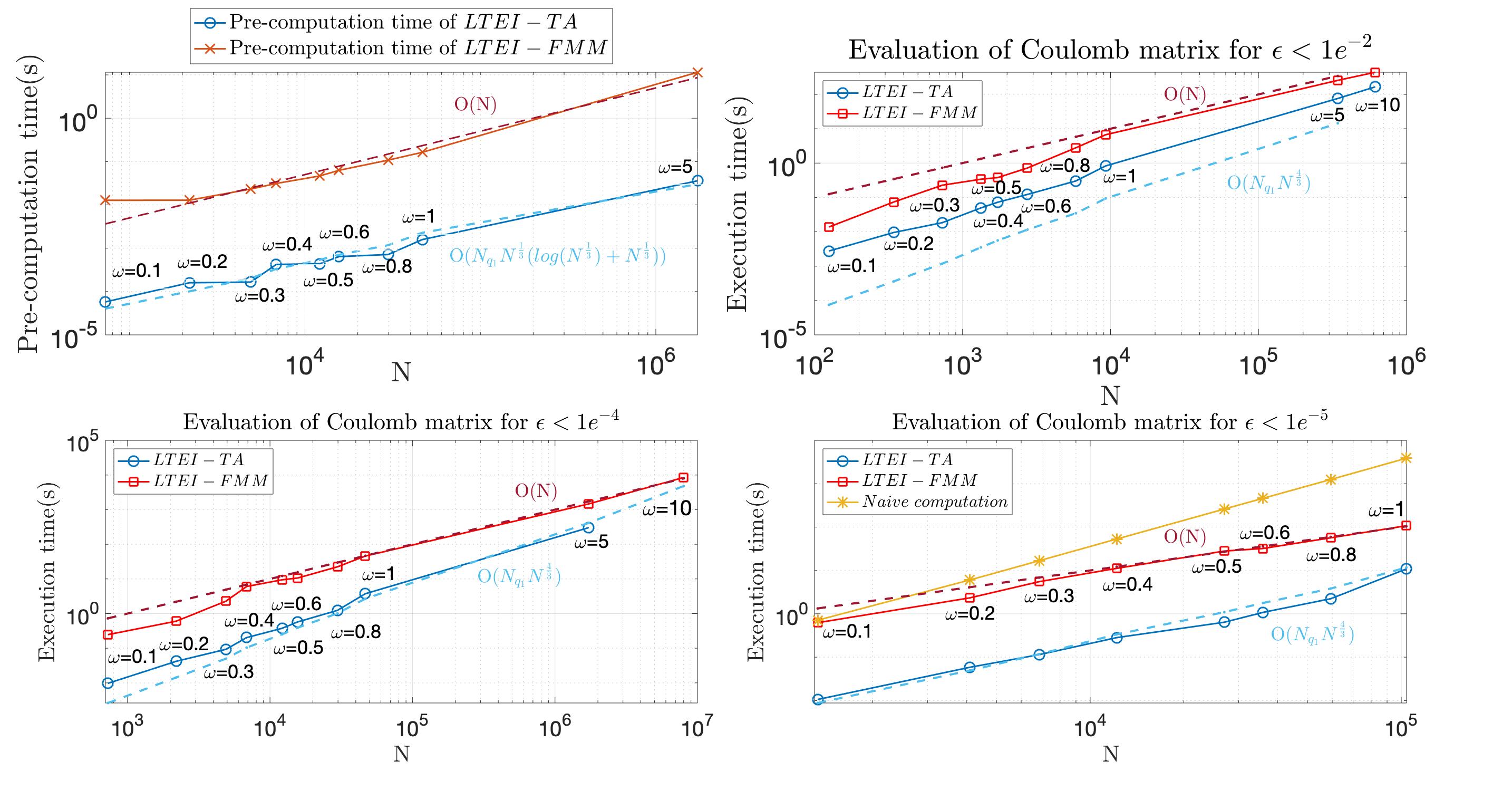}   
\caption{The leftmost plot represents the precomputation time for each approximation approach (LTEI-{\lrttei} and {\fmm}) with respect to  the maximum number of Chebyshev interpolation points  $N$. We impose here that the relative error denoted by $\epsilon$ is smaller than   $\leqslant 1e^{-4}$. We provide in the other plots a comparison in terms of the computational time required for  the evaluation of \eqref{Coul} between both approaches by varying the error bound $\epsilon$ and $\omega$ . We use the Glycine molecule $C_2H_5NO_2$ with fixed  $N_b=100$ and $N_{orb}=95$ in the cc-pVDZ basis set.}
\label{fig::times}
\end{center}
\end{figure}
Second, we compare the precomputation cost required to approximate the long-range kernel $K(x,y)$, as given in \eqref{eq::kernel}, by using both approaches {LTEI-\lrttei} and {\fmm} and by varying $\omega$ from $0.1$ to $5$. The results are displayed in the leftmost part of Figure~\ref{fig::times}. We observe that the runtime of  {\fmm} depends linearly on the total number of interpolation points $\mathcal{O}(N)$, independently of the value of $\omega$. {LTEI-\lrttei} has also a precomputation time in accordance with the theory $\mathcal{O}(N_{q_1}N^{\frac{1}{3}}(log(N^{\frac{1}{3}})+N^{\frac{1}{3}}))$ as explained in Section~\ref{sec::LRTTEI} . We observe that  {LTEI-\lrttei}  is two orders of magnitude faster than {\fmm}  for all the considered values of $\omega$ (which is a consequence of its small precomputation complexity).

Third, we discuss the time required  to evaluate the long-range Coulomb matrix, as given in equation  \eqref{Coul}, which involves the multiplication of the  matricization of $\mathcal{B}^{lr}$ with a matrix. Figure~\ref{fig::times} illustrates the execution time with respect to the number of  interpolation points $N$ needed to achieve  different relative errors for various values of $\omega$ for the evaluation of the Coulomb matrix. The relative error of {LTEI-\lrttei}   (resp. {\fmm} ) is $\frac{  \left  \| \mathbf{J}^{lr} -   \mathbf{J}_{LTEI-\lrttei}^{lr}     \right \|_2   }{ \left \| \mathbf{J}^{lr}   \right \|_2}$ (resp. $\frac{  \left  \| \mathbf{J}^{lr} -   \mathbf{J}_{\fmm}^{lr}     \right \|_2   }{ \left \| \mathbf{J}^{lr}   \right \|_2}$). We observe in Figure~ \ref{fig::times} that the evaluation of the long-range Coulomb matrix using {\fmm} approach scales linearly with the number of interpolation  points $\mathcal{O}(N)$, but more than linearithmically for {LTEI-\lrttei}. This reflects the complexity analysis of {LTEI-\lrttei} method, $\mathcal{O}(N_{q_1}N^{\frac{4}{3}})$, provided in Section~\ref{sec::fasteval}. However, LTEI-{\lrttei} is still faster than {\fmm} for relatively small values of $\omega$ and for different relative errors. This numerical gain can be explained by the important prefactor of the {\fmm} approach: even if the complexity is linear, there is an important constant hidden in the big $\mathcal{O}$ notations \cite{LU20071348}. While for small values of $\omega$, $N_{q_1}$ is small and hence {LTEI-\lrttei} is more efficient. However, {LTEI-\lrttei}  is not   asymptotically  competitive with respect to {\fmm} approach. Indeed, as $\omega$  controls the regularity of the  \textit{erf}-interaction function, when   $\omega$ increases, {LTEI-\lrttei} needs a  larger  number of interpolation points $N$ as well as  quadrature points $N_{q_1}$ to achieve a given accuracy. As a consequence, {LTEI-\lrttei} becomes more costly and less efficient than {\fmm}.

To summarize, these results demonstrate two major things: first, {LTEI-\lrttei} is a numerically highly efficient method, able to outperfom {\fmm} on tested cases. Second, we are able to reach the linear complexity (with regard to the total number of interpolation points) by exploiting {\fmm}, which allows to deal with more singular cases (with large values of $\omega$). In the following, we want to study the efficiency of our numerical approaches for variable $N_b$.
\begin{figure}[H]
\begin{center}
\includegraphics[scale=0.15]{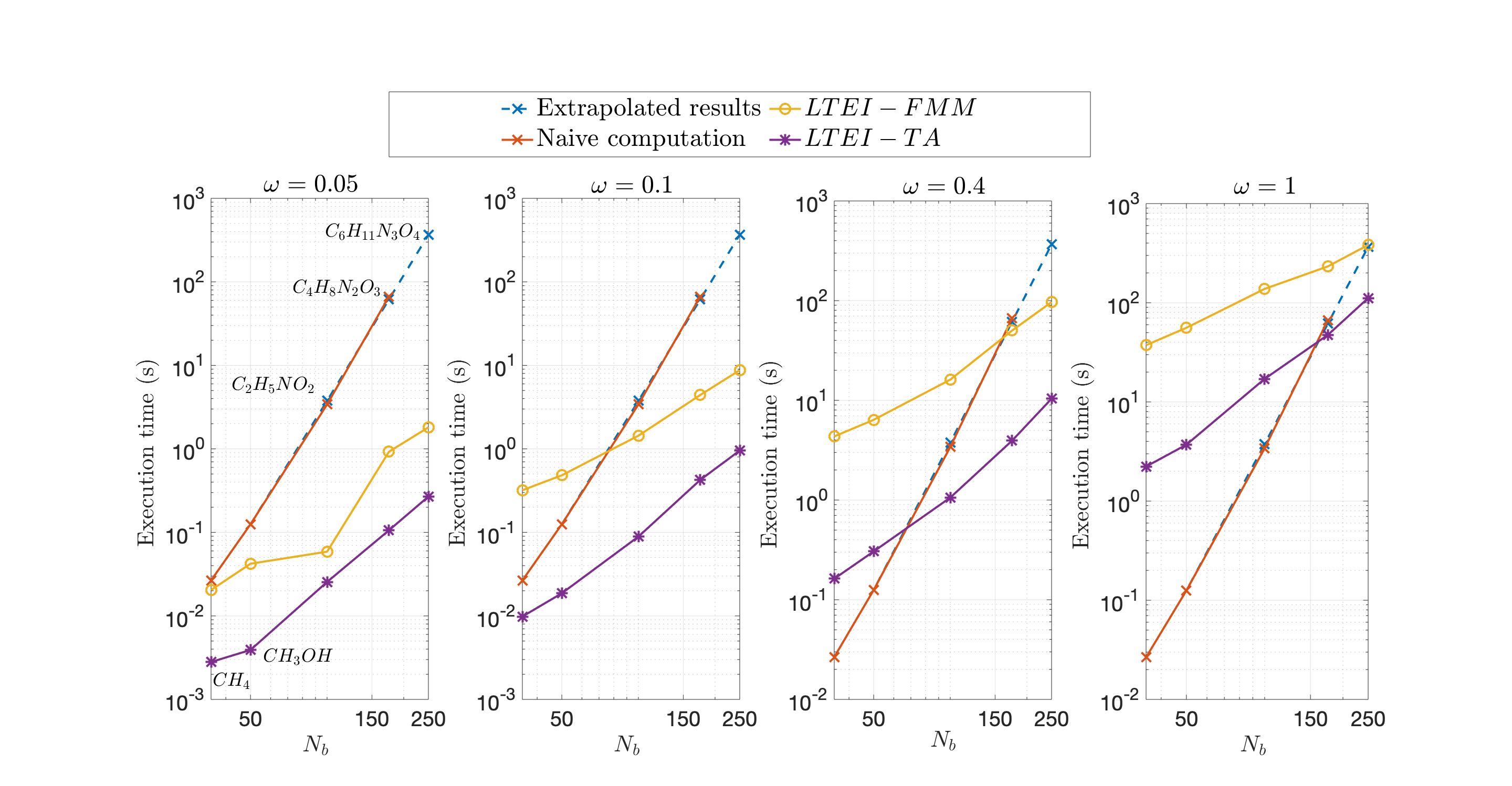}  
\caption{Execution time(s) required for the evaluation of  \eqref{Coul} using the TEI tensor $\mathbf{B}^{lr}$ for different values of $N_b$, for $\omega=0.05, \omega=0.1,\omega=0.4,$ and $\omega=1$ with  imposed relative error smaller than  $1e^{-5}$.}
\label{fig::application}
\end{center}
\end{figure} 
Figure~\ref{fig::application} displays the  execution times required to evaluate \eqref{Coul} with respect to the number of basis functions $N_b$ for different values of $\omega$ and different molecules. We impose here that the relative errors of {LTEI-\lrttei} and {\fmm} approaches for the evaluation of the long-range Coulomb matrix are  smaller than $1e^{-5}$. We compare the running times between three approaches: the first approach is a direct computation of \eqref{Coul} given the matricization of $\mathcal{B}^{lr}$ denoted by   $\mathbf{B}^{lr} \in  \mathbb{R}^{N_b^2 \times N_b^2}$ (times for $N_b > 175$ are obtained by extrapolation). The second (resp. third) approach  exploits  the factorized structure of $\mathbf{B}^{lr}$ obtained through {LTEI-\lrttei} (resp. {\fmm})   to compute \eqref{Coul}. For small $\omega$, we notice that a faster computation of \eqref{Coul} is obtained through   {LTEI-\lrttei} and {\fmm} methods: LTEI-{\lrttei}  is about one order of magnitude faster than {\fmm}. For important values of $\omega$ ($\omega$=1), the new introduced approaches, {LTEI-\lrttei} and  {\fmm},  are less efficient    given the high number of interpolation points $N$ needed as well as the number of quadrature points $N_{q_1}$ for LTEI-{\lrttei} method as we notice in Figure~\ref{fig::times}. However,  when $N_b$ increases,  the tensor contractions using the direct method  will be expensive and will have high memory demands (sometimes $\mathbf{B}^{lr}$ is too large to store in memory) . Therefore in some cases, it would be beneficial to use one of the new factorization methods  to  reduce the computational and storage cost. The numerical results are obtained for different molecules with different topologies.  Therefore, in order to preserve the accuracy, in practice, we choose the size of the computational box  $[-b,b]$  depending on the size of the molecule as well as the Gaussian functions decay as explained previously in Section~\ref{sec::LRTTEI}.\raggedbottom
\subsection{Tensor compression techniques}
In this section we study numerically compression techniques  to reduce  the computation and storage cost of $\mathbf{M}_{\lrttei,max} \in  \mathbb{R}^{N_b^2 \times N}$ or $\mathbf{M}_{FMM} \in  \mathbb{R}^{N_b^2 \times N}$ in order to speed up the evaluation of the Coulomb matrix \eqref{Coul}. These techniques were   discussed in Section~\ref{sub::LR}. First, the number of basis functions $N_b$ can be reduced by using screening techniques that exploit the symmetries of the pairs of basis functions as well as the properties of Gaussian type-functions. Indeed, Figure~\ref{fig:pairs} shows that the number of pairs of Gaussian type basis functions $N_b^2$ can be reduced by using screening. Second, for small values of $\omega$ and different numbers of basis functions $N_b$, Figure~\ref{fig:low_rank} shows that the singular values of   $\mathbf{M}_{\lrttei,max}$ decay quickly   , so $\mathbf{M}_{\lrttei,max}$ can be approximated by a low-rank matrix. Therefore,  we had recourse to three different approaches for the compression of $\mathbf{M}_{\lrttei,max}$: the first approach, denoted by SVD, consists in approximating  $\mathbf{M}_{\lrttei,max}$  using $\epsilon$-truncated SVD;
the second approach, denoted by KR, exploits  the Khatri-Rao product properties as discussed in Section~\ref{KR};  and  the third approach, denoted by ADAP+KR, includes the partitioning of pairs of basis functions in terms of their numerical supports   combined with KR approach as explained in Section~\ref{subsub::adap}.

Figure~\ref{fig::compression} (resp. Figure~\ref{fig::compression2}) displays the compression rate obtained between uncompressed $\mathbf{M}_{\lrttei,max}$ matrix  (resp.  screened $\mathbf{M}_{\lrttei,max}$ matrix  )  and its compressed representation, for different molecules with different number of basis functions $N_b$ in the basis set cc-pVDZ. We notice that 
the best compression rate, i.e $(1-\frac{size~of~ compressed~version}{size~of~original})*100$, is obtained through the ADAP+KR approach as observed in Figure~\ref{fig::compression2} ($86\%$ for $N_b=175$) compared to the other approaches SVD ($75\%$ for $N_b=175$) and KR ($83\%$ for $N_b=175$). We observe that for SVD, the larger $N_b$ ($N_b \geq 50$), the better the compression.  While screening techniques reduce the storage requirements of the matrix  $\mathbf{M}_{\lrttei,max}$ \cite{meth},  better compression results  are obtained when they are  combined with additional techniques introduced here. Figure~\ref{fig::timecompress}, shows the computational time required  for the   compression of  $\mathbf{M}_{\lrttei,max}$. The worst execution time is obtained  for  SVD method, in particular  for large values of $N_b$ ($N_b \geq 100$).

In summary, the adaptive approach leads to the best reduction in terms of  storage   while being the fastest among the tested methods. Moreover, the choice of the dimension of the computational box $b$ does not  have to be fixed in advance,  since it depends on the pairs of Gaussian type-functions \eqref{basis}. We further investigate the accuracy of this method in Table \ref{table::accu}.  We display in this table the relative error obtained when approximating  the Coulomb matrix \eqref{Coul} by using either $M_{\lrttei,max}$ compressed by the adaptive approach or a fixed computational box $[-b,b]$. The results show that the adaptive approach is more accurate than the ones obtained by fixing the computational box in advance.
However, by using the adaptive method, the computation of the Coulomb matrix requires multiple  matrix-matrix multiplications, and this can be more costly than fixing the computational box $[-b,b]$ in advance. However, since these multiplication can be performed in parallel, parallelization might be a key component to speed up the computation of the long-range Coulomb matrix \eqref{Coul}.
\begin{figure}[H]
\centering
\begin{minipage}{.5\textwidth}
  \centering
  \includegraphics[width=8cm,height=4.5cm]{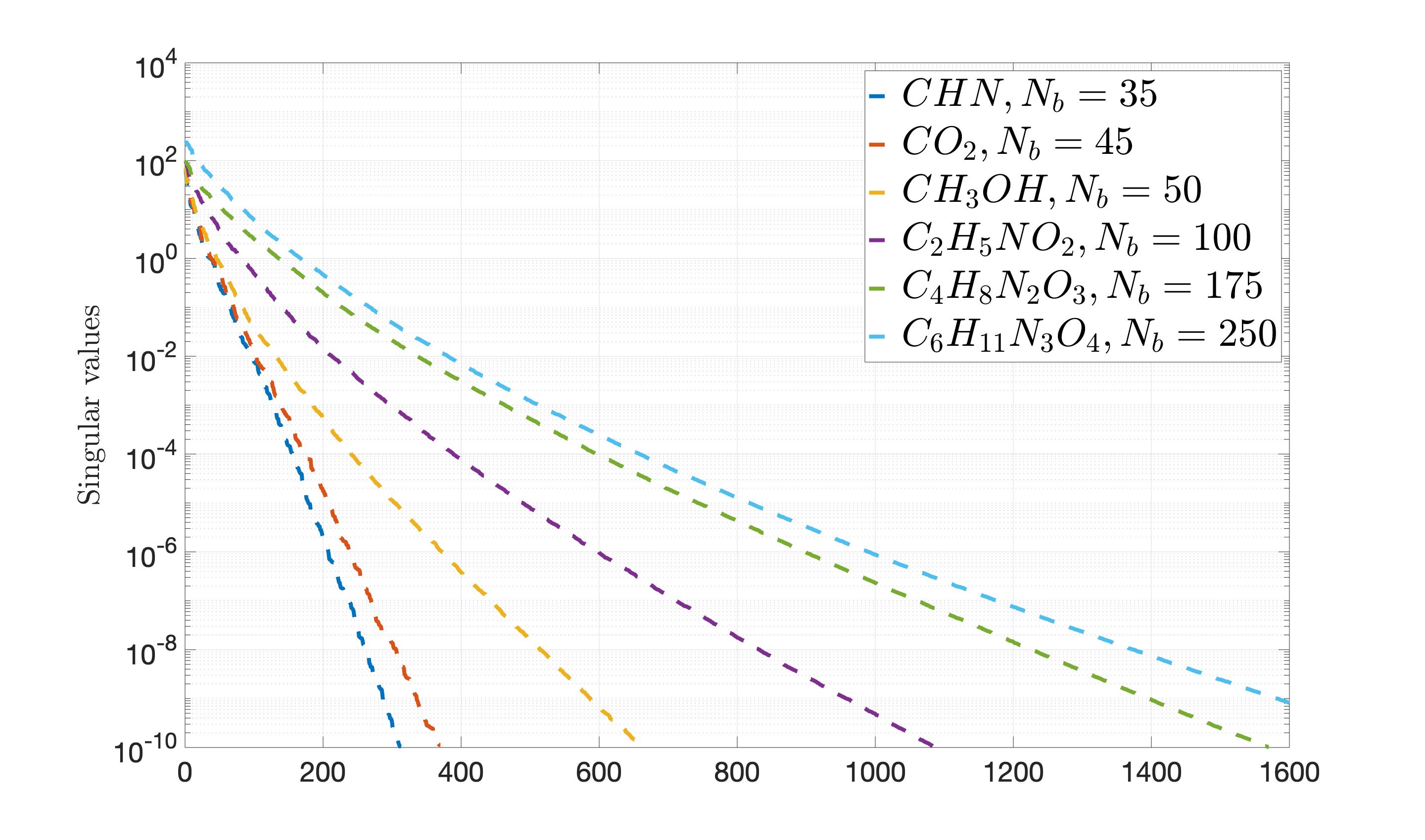}
  \subcaption{}
  \label{fig:low_rank}
\end{minipage}%
\begin{minipage}{.5\textwidth}
  \centering
  \includegraphics[width=9cm,height=5.2cm]{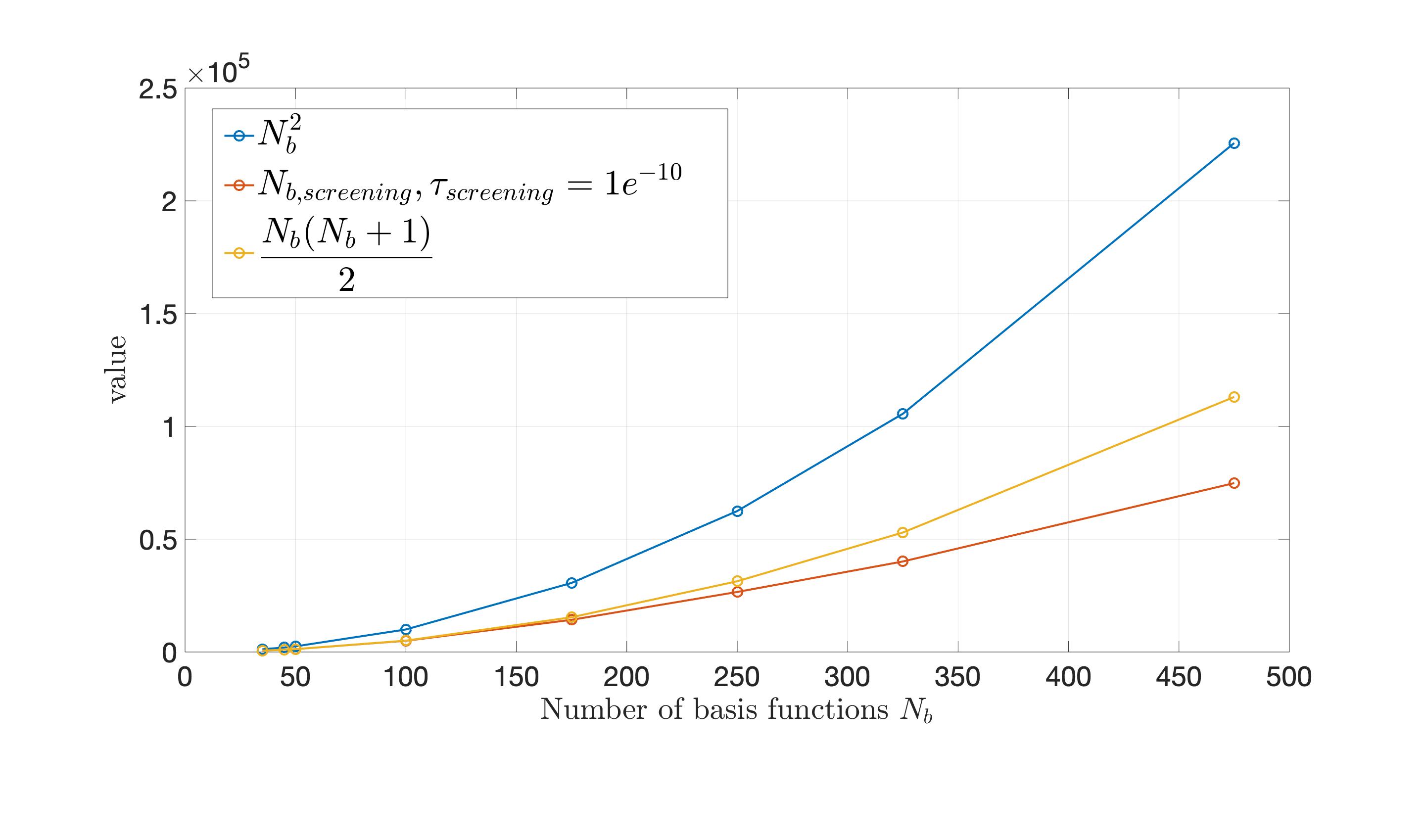}
  \subcaption{}
  \label{fig:pairs}
\end{minipage}
\centering
\caption{(a) Singular values of $\mathbf{M}_{\lrttei,max} \in \mathbb{R}^{N_b^2 \times N }$  for different molecules with $\omega=0.1$ and $N_{q_1}=3$. (b) Number of reduced pairs of basis functions obtained by exploiting  symmetry (yellow curve), as well as symmetry+properties of  Gaussian type functions with  $\tau_{screening}$=1e-10 (red curve).}
\end{figure}
\begin{figure}[H]
\centering
\begin{minipage}{.45\textwidth}
  \centering
  \includegraphics[width=1.\linewidth, height=150px]{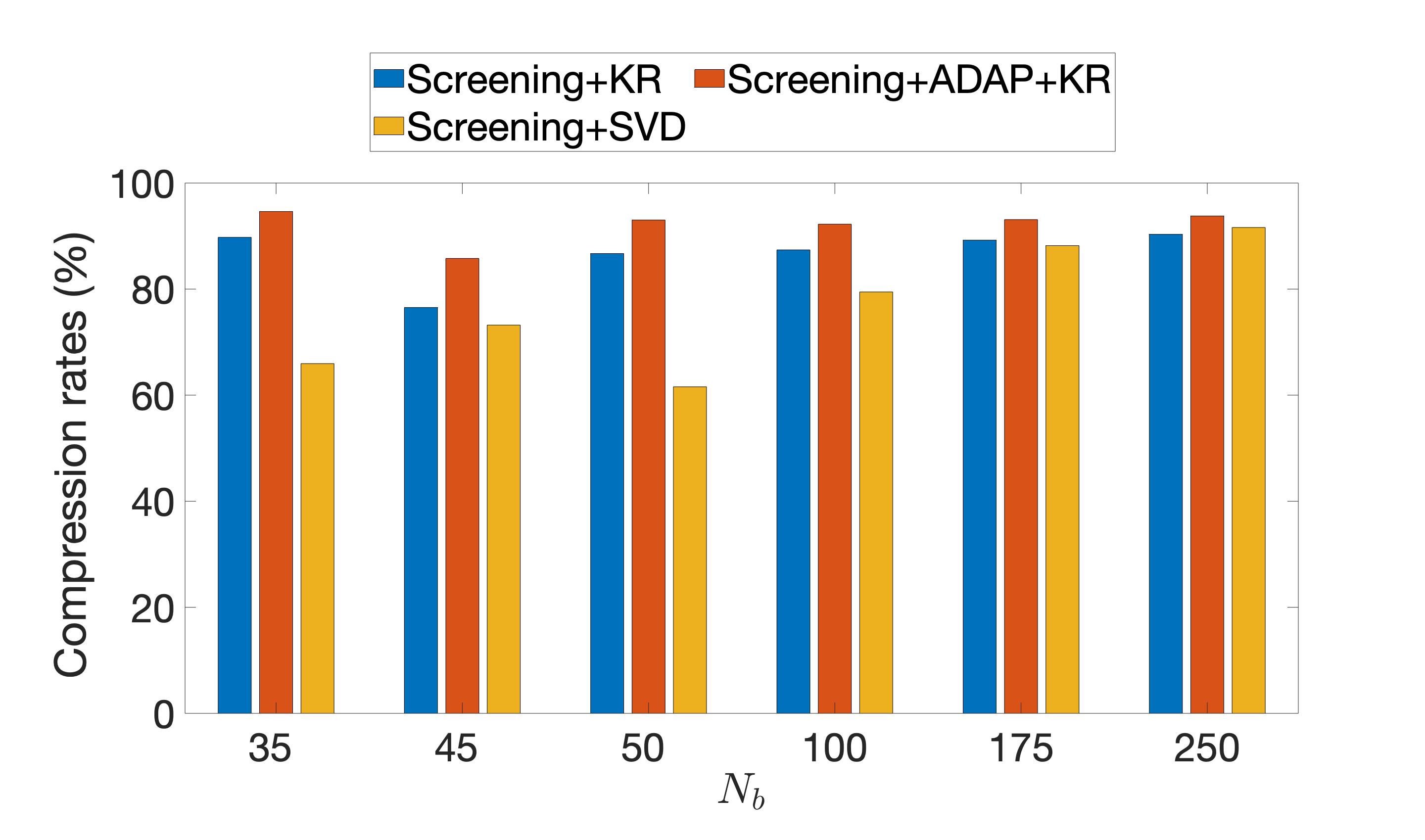}
  \caption{Compression rate between the original computed $\mathbf{M}_{\lrttei,max}$ matrix and its compressed representation for $\omega=0.3$ for different values of $N_b$, for the different molecules displayed in Figure~\ref{fig:low_rank}.}
  \label{fig::compression}
\end{minipage}%
\hfill 
\begin{minipage}{.45 \textwidth}
  \centering
  \includegraphics[width=1. \linewidth, height=150px]{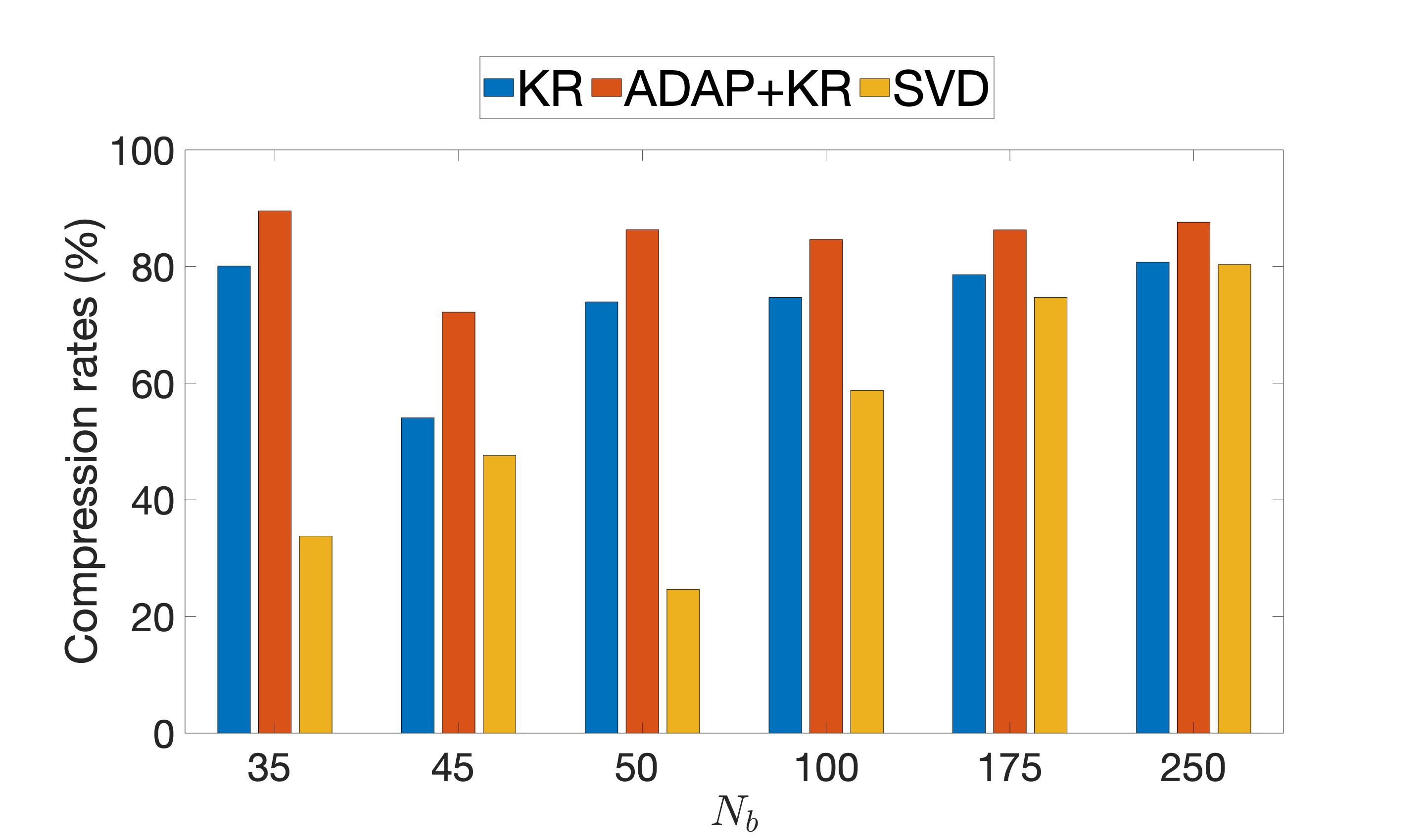}
  \caption{Compression rate between   $\mathbf{M}_{\lrttei,max}$ matrix (after screening)    and its compressed representation for $\omega=0.3$ for different values of $N_b$, for the different molecules displayed in Figure~\ref{fig:low_rank}.}
  \label{fig::compression2}
\end{minipage}
\end{figure}
\begin{figure}[H]
\centering
\begin{minipage}{.5\textwidth}
  \centering
  \includegraphics[width=1.\linewidth, height=150px]{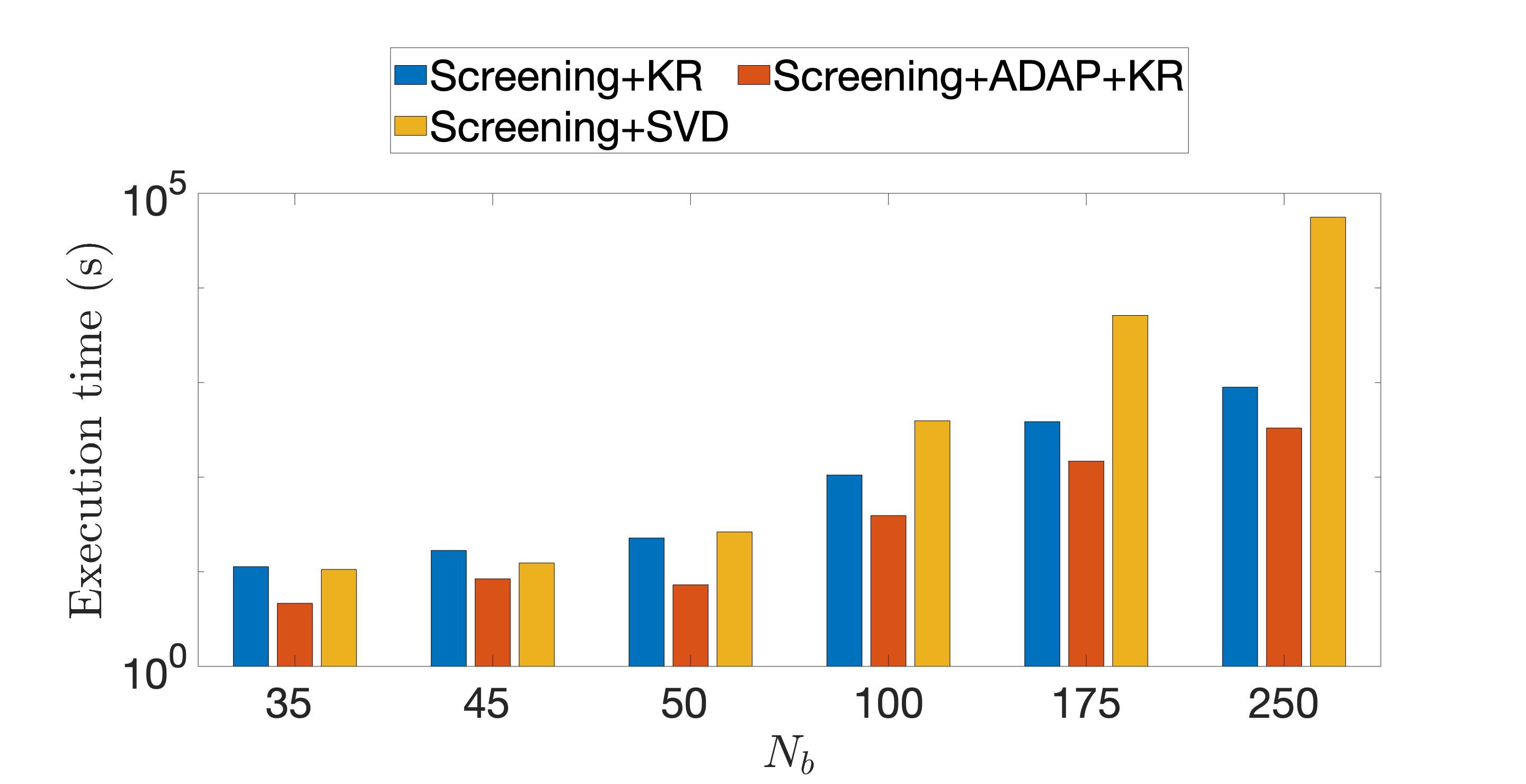}
  \caption{Execution time(s) of different compression methods defined in Section~\ref{sub::LR} for $\omega=0.3$, for different values of $N_b$, for the different molecules displayed in Figure~\ref{fig:low_rank}.}
  \label{fig::timecompress}
\end{minipage}%
\end{figure}

\begin{table}[H]
\caption {Adaptive method, $\omega=0.5$} \label{table::accu} 
\begin{center}
\scalebox{0.9}{%
\begin{tabular}{|l|c|r|r|r|r|r|}
  \hline
Molecule & $C_2H_5NO_2$ & $C_4H_8N_2O_3$ & $C_6H_{11}N_3O_4$ \\
  \hline
$N_b$ &  100  & 175 & 250 \\ 
 \hline
Adaptive approach & 1.0354e-7 & 2.4882e-8 & 4.587e-7\\
  \hline
$b=15$ & 1.6058e-7 & 1.8332e-7 & 8.2245e-7\\ 
\hline
$b=10$ & 3.7359e-07 & 0.001 & 0.02068\\
\hline
\end{tabular}}
\end{center}
\end{table}

\section{Conclusion}
 This paper introduces two new compression methods for the long-range kernel $K$ and the approximation  of the long-range six-dimensional two-electron integrals tensor. The first approach , referred to as {LTEI-\lrttei}, relies on  two-dimensional Chebyshev interpolation, Gaussian quadrature for numerical integration, and FFT  for computing Chebyshev  coefficients.
 The approximation of the long-range two-electron integrals tensor $\mathcal{B}^{lr}$ by using this  method  allows to exploit a tensorized structure that   leads to an  efficient application of the matricization of $\mathcal{B}^{lr}$  to evaluate the long-range Coulomb matrix for fixed $N_b$ and $N_{orb}$, with $\mathcal{O}(N_{q_1} N^\frac{4}{3})$ complexity, where $N$ is the number of Chebyshev interpolation points and $N_{q_1}$ is the number of quadrature points. The second approach, referred to as {\fmm}, relies on kernel-independent Fast Multipole Methods, with $\mathcal{O}(N)$ complexity. It exploits the asymptotically smooth behaviour of the long-range kernel $K$. The storage and time complexity of the presented methods were analysed and  compared numerically, exhibiting both the high efficiency of {LTEI-\lrttei} and the linear complexity of {\fmm}. We further investigated the compression of $\mathcal{B}^{lr}$ by using  screening techniques, low-rank methods, and an adaptive approach. {LTEI-\lrttei} approach is particularly efficient for small values of $\omega$, where $\omega$ is the separation parameter that controls the regularity of $K$. However,  for large values of $\omega$, in order to preserve accuracy, the number of interpolation points  as well as the number of quadrature points   becomes important for  {LTEI-\lrttei} and  thus {\fmm} becomes more efficient.

 As future work, we are planning to explore the potential of {LTEI-\lrttei}  for small values of $\omega$ in a range of quantum chemical contexts as post-HF models or hybrid approaches such as (long-range) DMRG–short-range DFT \cite{Hedeg_rd_2015}. Concerning {\fmm} it would be interesting to consider more singular kernels than the one in this paper (such as $\frac{erfc(\omega|\boldsymbol{x}-\boldsymbol{y}|)}{|\boldsymbol{x}-\boldsymbol{y}|}$ or the Coulomb kernel directly), thus extending {\fmm} to the evaluation of the short-range two-electron integrals by studying appropriate singular quadrature. Such work might be also  beneficial for Particle Mesh Ewald methods \cite{unknown}. 

\section*{Acknowledgments}
The authors are grateful to Julien Toulouse (Sorbonne university and CNRS), Emmanuel Giner( Sorbonne university) and  Xavier Claeys (Sorbonne university) for valuable discussions. We are thankful to Emmanuel Giner for his assistance with the configuration of quantum package and the extraction of molecular data.
The authors are also grateful to the CLEPS infrastructure from the Inria of Paris for providing resources and support. This project has received funding from the European Research Council (ERC) under the European Union’s Horizon 2020
research and innovation program (grant agreement No 810367).

\appendix
\section{The \textit{defmm} library}
\label{Appen::B}
The \textit{defmm} library (https://github.com/IChollet/defmm) is a easy to use C++ implementation of the directional interpolation-based Fast Multipole Method exploiting equispaced interpolation combined with Fast Fourier Transforms. Mainly, \textit{defmm} ensures a $\mathcal{O}(N)$ complexity independently of the particle distribution. Here, we provide an example of a short program calling \textit{defmm}: only five lines are needed to construct and apply the FMM matrix to a vector.
\begin{lstlisting}
#include "path to defmm/include/interface.hpp"
using namespace defmm;
int main(){

  const int  DIM   = 3    ; // Dimension
  const int  ORDER = 4    ; // Interpolation order
  const int  NCRIT = 32   ; // Number of particle per leaf cell
  const flt  KAPPA = 0.   ; // Wavenumber (for oscillatory kernels)
  const int  N     = 41334; // Number of points
  
  // Get random charge vector
  Vecc Q(N), P(N);
  for(int n = 0; n < N; n++){
    Q[n] = cplx(urand);}
  
  IBFMM_Mat<DIM> A;                   // FMM matrix
  A.addSourceParticlesINP("Y.inp",N); // Read source particles in Y.inp
  A.addTargetParticlesINP("X.inp",N); // Read target particles in X.inp
  A.prcmpt(ORDER,NCRIT,KAPPA);        // Precompute
  gemv(A,Q,P);                     // Compute P = A Q
 
  return 0;
}
\end{lstlisting}
As a header-only library, \textit{defmm} does not need to be compiled before calling. However, our library calls both BLAS and the FFTW3 library \cite{FFTW}.

Input files for the listing of source and target particles (that can be the same) are given as a sequence of particle coordinates (one particle per line, coordinates separated by blanks).


\section{The Hartree-Fock exchange}
In computational quantum chemistry, the  efficient construction of the long-range exchange matrix in the Fock matrix is also interesting \cite{Resolutions,khorom,Resolutions}. This matrix is calculated by using the long-range two-electron integrals tensor $\mathcal{B}^{lr}$. The long-range exchange matrix is given by
\begin{equation}
\label{eq::exchange}
    \mathbf{K}^{lr}(\mu,\nu)=2\sum_{j=1}^{N_{orb}}\sum_{\lambda,\kappa=1}^{N_b^2}q_{j\lambda}q_{ j\kappa}\mathcal{B}^{lr}(\mu,\lambda,\kappa,\nu), \mu, \nu \in \left \{1,\cdots,N_b\right \},
\end{equation}
with $q_{j\lambda}$, $q_{ j\kappa}$, and $N_{orb}$ being defined in Section~\ref{sec::application}. Using the long-range two-electron integrals tensor $\mathcal{B}^{lr}$, The evaluation of $\mathbf{K}^{lr}(\mu,\nu)$  costs $\mathcal{O}(N_b^2N_{orb})$. One can use the factorized structure $\mathbf{B}^{lr}_{LTEI-{\lrttei}}$ defined in \eqref{eq::tensors} to reduce the computational cost to $\mathcal{O}(NN_{orb}(N_b+N_{q_1}N^{\frac{1}{3}})$ for LTEI-{\lrttei}
approach with $N$ being the number of Chebyshev interpolation points and $N_{q_1}$ being the number of quadrature points. We obtain the following representation
\begin{equation}
    \mathbf{K}^{lr}_{LTEI-{\lrttei}}(\mu,\nu)= 2\sum_{j=1}^{N_{orb}}\left ( \sum_{i=1}^{N_{q_1}}w_i\left (\mathbf{Q}_j\mathbf{M}^{(i)}_{TA,\mu}  \right ) \otimes_{l=1}^3A^{(i)}  \left ( \mathbf{Q}_j\mathbf{M}^{(i)}_{TA,\nu}   \right )^\top \right ), 
\end{equation}
where for a fixed $j \in  \left \{1,\cdots,N_{orb} \right \} $ and $\lambda \in \left \{1,\cdots,N_{b} \right \}$, we have $\mathbf{Q}_j \in \mathbb{R}^{N_b}$ and $\mathbf{Q}_j(\lambda)=q_{j\lambda}$. The matrices $\mathbf{M}^{(i)}_{TA,\mu} \in \mathbb{R}^{N_b \times N_i^3}, i \in  \left \{1,\cdots,N_{q_1} \right \}$   are obtained by fixing the index $\mu \in  \left \{1,\cdots,N_b \right \}$ in the  tensorized representation of $\mathbf{M}^{(i)}_{TA} \in \mathbb{R}^{N_b
^2 \times N_i^3}$. These tensor representations are denoted by  $\mathcal{M}^{(i)}_{TA} \in \mathbb{R}^{N_b \times N_b \times N_i^3}$   such that 
\begin{equation}
    \mathcal{M}^{(i)}_{TA}[\mu,:,:]=\mathbf{M}^{(i)}_{TA,\mu}.
\end{equation}
Figure \ref{fig::Exch} displays the execution times required to evaluate the long-range exchange matrix \eqref{eq::exchange} with respect to the number of basis functions $N_b$, for small values of $\omega \in \left \{0.05,0.1  \right \}$. We impose that the relative error of LTEI-{\lrttei} approach for this evaluation is smaller than $1e^{-5}$ and we compare the running times between a direct computation of \eqref{eq::exchange} given $\mathbf{B}^{lr} \in  \mathbb{R}^{N_b^2 \times N_b^2}$ and the factorized structure of $\mathbf{B}^{lr}$ using $\mathbf{B}^{lr}_{LTEI-{\lrttei}}$. It can be seen that in the case of small values of $\omega$, we notice that a faster construction of \eqref{eq::exchange} is obtained through LTEI-{\lrttei}. Compression techniques introduced in Section~\ref{sub::LR}, can be used here to get better  running times.
\begin{figure}[H]
\begin{center}
\includegraphics[scale=0.15]{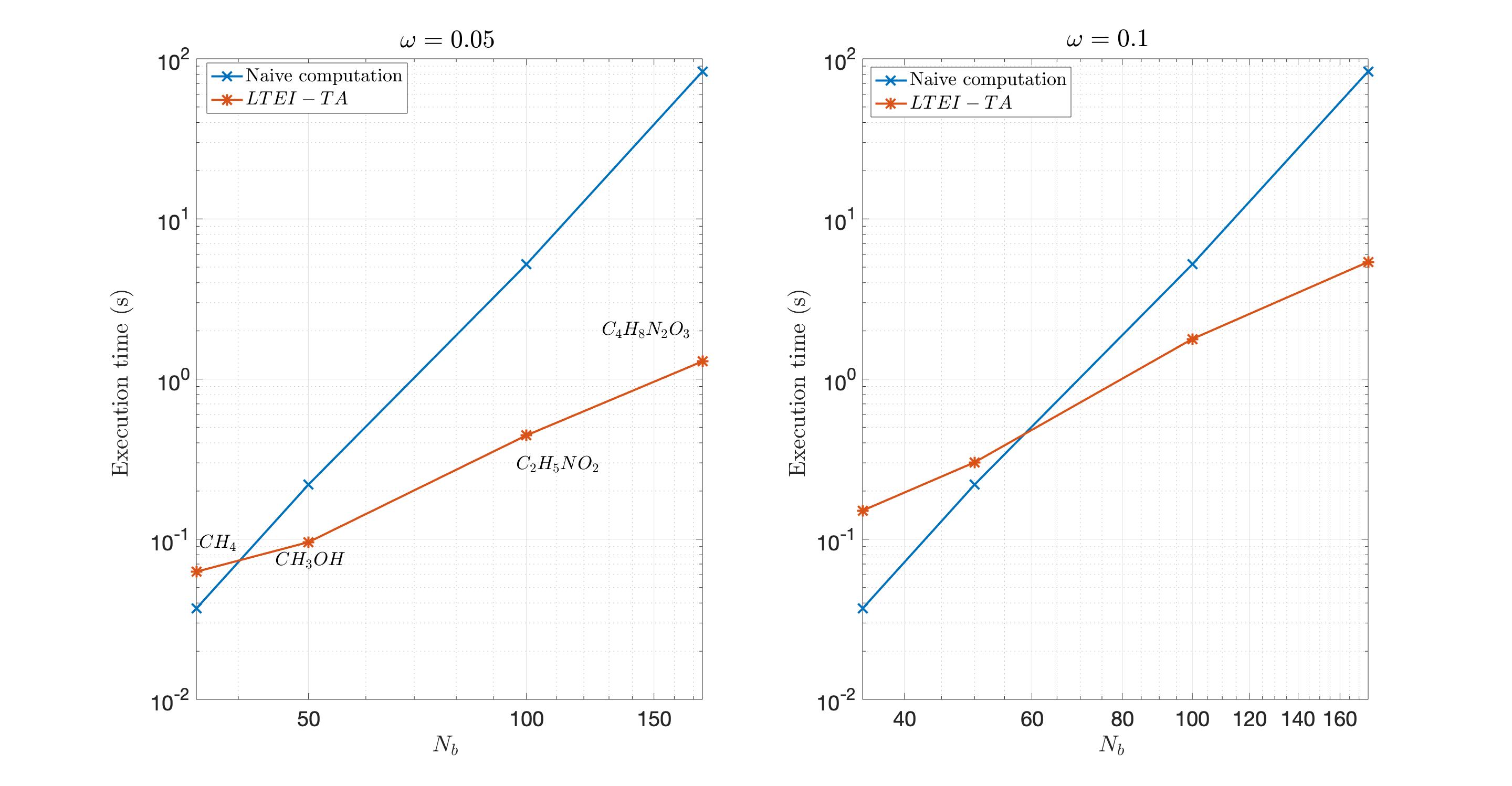}  
\caption{Execution time(s) required for the evaluation of  \eqref{Coul} using the TEI tensor $\mathbf{B}^{lr}$ for different values of $N_b$, for $\omega=0.05$ and $\omega=0.1$  with  imposed relative error smaller than  $1e^{-5}$.}
\label{fig::Exch}
\end{center}
\end{figure}

\bibliographystyle{model1-num-names}
\bibliography{refs}  

\begin{thebibliography}{43}
\expandafter\ifx\csname natexlab\endcsname\relax\def\natexlab#1{#1}\fi
\providecommand{\url}[1]{\texttt{#1}}
\providecommand{\href}[2]{#2}
\providecommand{\path}[1]{#1}
\providecommand{\DOIprefix}{doi:}
\providecommand{\ArXivprefix}{arXiv:}
\providecommand{\URLprefix}{URL: }
\providecommand{\Pubmedprefix}{pmid:}
\providecommand{\doi}[1]{\href{http://dx.doi.org/#1}{\path{#1}}}
\providecommand{\Pubmed}[1]{\href{pmid:#1}{\path{#1}}}
\providecommand{\bibinfo}[2]{#2}
\ifx\xfnm\relax \def\xfnm[#1]{\unskip,\space#1}\fi
\bibitem[{Khoromskaia and Khoromskij(2015)}]{box}
\bibinfo{author}{V.~Khoromskaia}, \bibinfo{author}{B.~N. Khoromskij},
\newblock \bibinfo{title}{{{Tensor numerical methods in quantum chemistry: from
  Hartree–Fock to excitation energies}}},
\newblock \bibinfo{journal}{Phys. Chem. Chem. Phys.} \bibinfo{volume}{17}
  (\bibinfo{year}{2015}) \bibinfo{pages}{31491--31509}.
\bibitem[{Cancès et~al.(2003)Cancès, Defranceschi, Kutzelnigg, {Le Bris}, and
  Maday}]{CANCES20033}
\bibinfo{author}{E.~Cancès}, \bibinfo{author}{M.~Defranceschi},
  \bibinfo{author}{W.~Kutzelnigg}, \bibinfo{author}{C.~{Le Bris}},
  \bibinfo{author}{Y.~Maday},
\newblock \bibinfo{title}{Computational quantum chemistry: A primer},
\newblock in: \bibinfo{booktitle}{Special Volume, Computational Chemistry},
  volume~\bibinfo{volume}{10} of \textit{\bibinfo{series}{Handbook of Numerical
  Analysis}}, \bibinfo{publisher}{Elsevier}, \bibinfo{year}{2003}, pp.
  \bibinfo{pages}{3--270}. \URLprefix
  \url{https://www.sciencedirect.com/science/article/pii/S1570865903100038}.
  \DOIprefix\doi{https://doi.org/10.1016/S1570-8659(03)10003-8}.
\bibitem[{Ashworth(2012)}]{hamiltonian}
\bibinfo{author}{S.~Ashworth},
\newblock \bibinfo{title}{Molecular quantum mechanics, 5th edn., by peter
  atkins and ronald friedman},
\newblock \bibinfo{journal}{Contemporary Physics - CONTEMP PHYS}
  \bibinfo{volume}{53} (\bibinfo{year}{2012}) \bibinfo{pages}{1--2}.
\bibitem[{Szabo et~al.(1996)Szabo, Attila, and SOstlund}]{quatummech}
\bibinfo{author}{Szabo}, \bibinfo{author}{Attila},
  \bibinfo{author}{N.~SOstlund},
\newblock \bibinfo{title}{Modern quantum chemistry : introduction to advanced
  electronic structure theory},
\newblock in: \bibinfo{booktitle}{Special Volume, Computational Chemistry},
  \bibinfo{publisher}{Mineola (N.Y.) : Dover publications},
  \bibinfo{year}{1996}, p. \bibinfo{pages}{481 / 481}. \URLprefix
  \url{http://lib.ugent.be/catalog/rug01:000906565}.
\bibitem[{Khoromskaia et~al.(2013)Khoromskaia, Khoromskij, and
  Schneider}]{khorom}
\bibinfo{author}{V.~Khoromskaia}, \bibinfo{author}{B.~N. Khoromskij},
  \bibinfo{author}{R.~Schneider},
\newblock \bibinfo{title}{Tensor-structured factorized calculation of
  two-electron integrals in a general basis},
\newblock \bibinfo{journal}{SIAM Journal on Scientific Computing}
  \bibinfo{volume}{35} (\bibinfo{year}{2013}) \bibinfo{pages}{A987--A1010}.
\bibitem[{Toulouse(2005)}]{theo1}
\bibinfo{author}{J.~Toulouse}, \bibinfo{title}{{Extension
  multid{\'e}terminantale de la m{\'e}thode de Kohn-Sham en th{\'e}orie de la
  fonctionnelle de la densit{\'e} par d{\'e}composition de l'interaction
  {\'e}lectronique en contributions de longue port{\'e}e et de courte
  port{\'e}e}}, \bibinfo{type}{Theses}, {Universit{\'e} Pierre et Marie Curie -
  Paris VI}, \bibinfo{year}{2005}. \URLprefix
  \url{https://tel.archives-ouvertes.fr/tel-00550772}.
\bibitem[{Savin(1996)}]{theo2}
\bibinfo{author}{A.~Savin}, \bibinfo{title}{On degeneracy, near-degeneracy and
  density functional theory}, volume~\bibinfo{volume}{4}, \bibinfo{year}{1996},
  pp. \bibinfo{pages}{327--357}. \DOIprefix\doi{10.1016/S1380-7323(96)80091-4}.
\bibitem[{Toulouse et~al.(2004)Toulouse, Colonna, and Savin}]{separation}
\bibinfo{author}{J.~Toulouse}, \bibinfo{author}{F.~m.~c. Colonna},
  \bibinfo{author}{A.~Savin},
\newblock \bibinfo{title}{Long-range--short-range separation of the
  electron-electron interaction in density-functional theory},
\newblock \bibinfo{journal}{Phys. Rev. A} \bibinfo{volume}{70}
  (\bibinfo{year}{2004}) \bibinfo{pages}{062505}.
\bibitem[{Giner(2021)}]{Sepemma}
\bibinfo{author}{E.~Giner},
\newblock \bibinfo{title}{A new form of transcorrelated hamiltonian inspired by
  range-separated dft},
\newblock \bibinfo{journal}{The Journal of Chemical Physics}
  \bibinfo{volume}{154} (\bibinfo{year}{2021}) \bibinfo{pages}{084119}.
\bibitem[{Savin(2020)}]{Sepsavin}
\bibinfo{author}{A.~Savin},
\newblock \bibinfo{title}{Models and corrections: Range separation for
  electronic interaction—lessons from density functional theory},
\newblock \bibinfo{journal}{The Journal of Chemical Physics}
  \bibinfo{volume}{153} (\bibinfo{year}{2020}) \bibinfo{pages}{160901}.
\bibitem[{Toulouse et~al.(2005)Toulouse, Gori-Giorgi, and Savin}]{short}
\bibinfo{author}{J.~Toulouse}, \bibinfo{author}{P.~Gori-Giorgi},
  \bibinfo{author}{A.~Savin},
\newblock \bibinfo{title}{{A short-range correlation energy density functional
  with multi-determinantal reference}},
\newblock \bibinfo{journal}{{Theoretical Chemistry Accounts: Theory,
  Computation, and Modeling}} \bibinfo{volume}{114} (\bibinfo{year}{2005})
  \bibinfo{pages}{305}.
\bibitem[{Lee et~al.(1997)Lee, Taylor, Dombroski, and Gill}]{FMM}
\bibinfo{author}{A.~Lee}, \bibinfo{author}{S.~Taylor},
  \bibinfo{author}{J.~Dombroski}, \bibinfo{author}{P.~Gill},
\newblock \bibinfo{title}{Optimal partition of the coulomb operator},
\newblock \bibinfo{journal}{Physical Review A - PHYS REV A}
  \bibinfo{volume}{55} (\bibinfo{year}{1997}) \bibinfo{pages}{3233--3235}.
\bibitem[{Ferté(2018)}]{portee}
\bibinfo{author}{A.~Ferté}, \bibinfo{title}{Théorie de la fonctionnelle de la
  densité avec une fonction d’onde multiréférence : Développement
  d’approximations pour la fonctionnelle de corrélation à courte portée
  utilisant la densité de paires à coalescence}, \bibinfo{year}{2018}.
  \bibinfo{note}{Unpublished}.
\bibitem[{{Lecours, Michael}(2021)}]{Lecours}
\bibinfo{author}{{Lecours, Michael}}, \bibinfo{title}{Compact Sparse Coulomb
  Integrals using a Range-Separated Potential}, Ph.D. thesis, University of
  Waterloo, \bibinfo{year}{2021}. \URLprefix
  \url{http://hdl.handle.net/10012/17516}.
\bibitem[{Limpanuparb et~al.(2013)Limpanuparb, Milthorpe, Rendell, and
  Gill}]{Resolutions}
\bibinfo{author}{T.~Limpanuparb}, \bibinfo{author}{J.~Milthorpe},
  \bibinfo{author}{A.~Rendell}, \bibinfo{author}{P.~Gill},
\newblock \bibinfo{title}{Resolutions of the coulomb operator: Vii. evaluation
  of long-range coulomb and exchange matrices},
\newblock \bibinfo{journal}{Journal of Chemical Theory and Computation}
  \bibinfo{volume}{9} (\bibinfo{year}{2013}) \bibinfo{pages}{863--867}.
\bibitem[{Simmonett et~al.(2022)Simmonett, Brooks, and Darden}]{unknown}
\bibinfo{author}{A.~Simmonett}, \bibinfo{author}{B.~Brooks},
  \bibinfo{author}{T.~Darden}, \bibinfo{title}{Efficient and scalable
  electrostatics via spherical grids and treecode summation},
  \bibinfo{year}{2022}. \DOIprefix\doi{10.26434/chemrxiv-2022-6xzql},
  \bibinfo{note}{unpublished}.
\bibitem[{Demel et~al.(2021)Demel, Lecours, Habrovský, and
  Nooijen}]{PMID:34686052}
\bibinfo{author}{O.~Demel}, \bibinfo{author}{M.~J. Lecours},
  \bibinfo{author}{R.~Habrovský}, \bibinfo{author}{M.~Nooijen},
\newblock \bibinfo{title}{Toward laplace mp2 method using range separated
  coulomb potential and orbital selective virtuals},
\newblock \bibinfo{journal}{The Journal of chemical physics}
  \bibinfo{volume}{155} (\bibinfo{year}{2021}) \bibinfo{pages}{154104}.
\bibitem[{Knowino(2010)}]{set}
\bibinfo{author}{Knowino}, \bibinfo{title}{Gaussian type orbitals ---
  knowino{,} an encyclopedia}, \bibinfo{year}{2010}. \URLprefix
  \url{http://knowino.org/w/index.php?title=Gaussian_type_orbitals&oldid=3278}.
\bibitem[{Pritchard et~al.(2019)Pritchard, Altarawy, Didier, Gibson, and
  Windus}]{basis}
\bibinfo{author}{B.~P. Pritchard}, \bibinfo{author}{D.~Altarawy},
  \bibinfo{author}{B.~Didier}, \bibinfo{author}{T.~D. Gibson},
  \bibinfo{author}{T.~L. Windus},
\newblock \bibinfo{title}{New basis set exchange: An open, up-to-date resource
  for the molecular sciences community},
\newblock \bibinfo{journal}{Journal of Chemical Information and Modeling}
  \bibinfo{volume}{59} (\bibinfo{year}{2019}) \bibinfo{pages}{4814--4820}.
  \bibinfo{note}{PMID: 31600445}.
\bibitem[{Fong and Darve(2009)}]{kernel}
\bibinfo{author}{W.~Fong}, \bibinfo{author}{E.~Darve},
\newblock \bibinfo{title}{The black-box fast multipole method},
\newblock \bibinfo{journal}{Journal of Computational Physics}
  \bibinfo{volume}{228} (\bibinfo{year}{2009}) \bibinfo{pages}{8712--8725}.
\bibitem[{Greengard and Rokhlin(1987)}]{GREENGARD1987325}
\bibinfo{author}{L.~Greengard}, \bibinfo{author}{V.~Rokhlin},
\newblock \bibinfo{title}{A fast algorithm for particle simulations},
\newblock \bibinfo{journal}{Journal of Computational Physics}
  \bibinfo{volume}{73} (\bibinfo{year}{1987}) \bibinfo{pages}{325--348}.
\bibitem[{Garniron et~al.(2019)Garniron, Applencourt, Gasperich, Benali,
  Ferté, Paquier, Pradines, Assaraf, Reinhardt, Toulouse, Barbaresco, Renon,
  David, Malrieu, Véril, Caffarel, Loos, Giner, and Scemama}]{quan_pack}
\bibinfo{author}{Y.~Garniron}, \bibinfo{author}{T.~Applencourt},
  \bibinfo{author}{K.~Gasperich}, \bibinfo{author}{A.~Benali},
  \bibinfo{author}{A.~Ferté}, \bibinfo{author}{J.~Paquier},
  \bibinfo{author}{B.~Pradines}, \bibinfo{author}{R.~Assaraf},
  \bibinfo{author}{P.~Reinhardt}, \bibinfo{author}{J.~Toulouse},
  \bibinfo{author}{P.~Barbaresco}, \bibinfo{author}{N.~Renon},
  \bibinfo{author}{G.~David}, \bibinfo{author}{J.-P. Malrieu},
  \bibinfo{author}{M.~Véril}, \bibinfo{author}{M.~Caffarel},
  \bibinfo{author}{P.-F. Loos}, \bibinfo{author}{E.~Giner},
  \bibinfo{author}{A.~Scemama},
\newblock \bibinfo{title}{Quantum package 2.0: An open-source
  determinant-driven suite of programs},
\newblock \bibinfo{journal}{Journal of Chemical Theory and Computation}
  \bibinfo{volume}{15} (\bibinfo{year}{2019}) \bibinfo{pages}{3591--3609}.
  \bibinfo{note}{PMID: 31082265}.
\bibitem[{Scheiber(2015)}]{scheiber2015chebyshev}
\bibinfo{author}{E.~Scheiber}, \bibinfo{title}{On the chebyshev approximation
  of a function with two variables}, \bibinfo{year}{2015}.
\bibitem[{Townsend and Trefethen(2013)}]{chebfun2}
\bibinfo{author}{A.~Townsend}, \bibinfo{author}{L.~N. Trefethen},
\newblock \bibinfo{title}{An extension of chebfun to two dimensions},
\newblock \bibinfo{journal}{SIAM Journal on Scientific Computing}
  \bibinfo{volume}{35} (\bibinfo{year}{2013}) \bibinfo{pages}{C495--C518}.
\bibitem[{JafariBehbahani and Roodaki(2015)}]{err2}
\bibinfo{author}{Z.~JafariBehbahani}, \bibinfo{author}{M.~Roodaki},
\newblock \bibinfo{title}{Two-dimensional chebyshev hybrid functions and their
  applications to integral equations},
\newblock \bibinfo{journal}{Beni-Suef University Journal of Basic and Applied
  Sciences} \bibinfo{volume}{4} (\bibinfo{year}{2015})
  \bibinfo{pages}{134--141}.
\bibitem[{Gupta(1991)}]{err1}
\bibinfo{author}{M.~Gupta},
\newblock \bibinfo{title}{Numerical methods and software (david kahaner, cleve
  moler, and stephen nash)},
\newblock \bibinfo{journal}{Siam Review - SIAM REV} \bibinfo{volume}{33}
  (\bibinfo{year}{1991}).
\bibitem[{Liu and TRENKLER(2008)}]{kroni}
\bibinfo{author}{S.~Liu}, \bibinfo{author}{O.~TRENKLER},
\newblock \bibinfo{title}{Hadamard, khatri-rao, kronecker and other matrix
  products},
\newblock \bibinfo{journal}{International Journal of Information , Systems
  Sciences} \bibinfo{volume}{4} (\bibinfo{year}{2008}).
\bibitem[{Kolda and Bader(2009)}]{doi:10.1137/07070111X}
\bibinfo{author}{T.~G. Kolda}, \bibinfo{author}{B.~W. Bader},
\newblock \bibinfo{title}{Tensor decompositions and applications},
\newblock \bibinfo{journal}{SIAM Review} \bibinfo{volume}{51}
  (\bibinfo{year}{2009}) \bibinfo{pages}{455--500}.
\bibitem[{Platte and Trefethen(2010)}]{Platte2010}
\bibinfo{author}{R.~B. Platte}, \bibinfo{author}{L.~N. Trefethen},
  \bibinfo{title}{Chebfun: A New Kind of Numerical Computing},
  \bibinfo{publisher}{Springer Berlin Heidelberg}, \bibinfo{address}{Berlin,
  Heidelberg}, \bibinfo{year}{2010}, pp. \bibinfo{pages}{69--87}. \URLprefix
  \url{https://doi.org/10.1007/978-3-642-12110-4_5}.
  \DOIprefix\doi{10.1007/978-3-642-12110-4_5}.
\bibitem[{Frigo and Johnson(2005)}]{FFTW}
\bibinfo{author}{M.~Frigo}, \bibinfo{author}{S.~G. Johnson},
\newblock \bibinfo{title}{The design and implementation of {FFTW3}},
\newblock \bibinfo{journal}{Proceedings of the IEEE} \bibinfo{volume}{93}
  (\bibinfo{year}{2005}) \bibinfo{pages}{216--231}. \bibinfo{note}{Special
  issue on ``Program Generation, Optimization, and Platform Adaptation''}.
\bibitem[{Dongarra et~al.(1990)Dongarra, Croz, Hammarling, and Duff}]{blas}
\bibinfo{author}{J.~Dongarra}, \bibinfo{author}{J.~Croz},
  \bibinfo{author}{S.~Hammarling}, \bibinfo{author}{I.~Duff},
\newblock \bibinfo{title}{A set of level 3 basic linear algebra subprograms},
\newblock \bibinfo{journal}{ACM Transactions on Mathematical Software}
  \bibinfo{volume}{16} (\bibinfo{year}{1990}) \bibinfo{pages}{1--17}.
\bibitem[{Chollet(2021)}]{igor}
\bibinfo{author}{I.~Chollet}, \bibinfo{title}{{Symmetries and Fast Multipole
  Methods for Oscillatory Kernels}}, \bibinfo{type}{Theses}, {Sorbonne
  Universit{\'e}}, \bibinfo{year}{2021}. \URLprefix
  \url{https://tel.archives-ouvertes.fr/tel-03203231}.
\bibitem[{{Barnes} and {Hut}(1986)}]{Barnes}
\bibinfo{author}{J.~{Barnes}}, \bibinfo{author}{P.~{Hut}},
\newblock \bibinfo{title}{{A hierarchical O(N log N) force-calculation
  algorithm}},
\newblock \bibinfo{journal}{Nature} \bibinfo{volume}{324}
  (\bibinfo{year}{1986}) \bibinfo{pages}{446--449}.
\bibitem[{Chaillat et~al.(2017)Chaillat, Desiderio, and
  Ciarlet}]{chaillat:hal-01543919}
\bibinfo{author}{S.~Chaillat}, \bibinfo{author}{L.~Desiderio},
  \bibinfo{author}{P.~Ciarlet},
\newblock \bibinfo{title}{{Theory and implementation of $\mathcal{H}$-matrix
  based iterative and direct solvers for Helmholtz and elastodynamic
  oscillatory kernels}},
\newblock \bibinfo{journal}{{Journal of Computational Physics}}
  (\bibinfo{year}{2017}).
\bibitem[{Bebendorf(2008)}]{Hierarchical}
\bibinfo{author}{M.~Bebendorf},
\newblock \bibinfo{title}{Hierarchical matrices},
\newblock \bibinfo{journal}{Lecture notes in computational science and
  engineering, v.63 (2008)} \bibinfo{volume}{63} (\bibinfo{year}{2008}).
\bibitem[{Hackbusch(2015)}]{Wolfgang}
\bibinfo{author}{W.~Hackbusch}, \bibinfo{title}{Hierarchical Matrices:
  Algorithms and Analysis}, volume~\bibinfo{volume}{49}, \bibinfo{year}{2015}.
  \DOIprefix\doi{10.1007/978-3-662-47324-5}.
\bibitem[{Losilla et~al.(2015)Losilla, Watson, Aspuru-Guzik, and
  Sundholm}]{doi:10.1021/ct501128u}
\bibinfo{author}{S.~A. Losilla}, \bibinfo{author}{M.~A. Watson},
  \bibinfo{author}{A.~Aspuru-Guzik}, \bibinfo{author}{D.~Sundholm},
\newblock \bibinfo{title}{Construction of the fock matrix on a grid-based
  molecular orbital basis using gpgpus},
\newblock \bibinfo{journal}{Journal of Chemical Theory and Computation}
  \bibinfo{volume}{11} (\bibinfo{year}{2015}) \bibinfo{pages}{2053--2062}.
  \bibinfo{note}{PMID: 26574409}.
\bibitem[{Xing and Chow(2020)}]{doi:10.1137/19M1252855}
\bibinfo{author}{X.~Xing}, \bibinfo{author}{E.~Chow},
\newblock \bibinfo{title}{Fast coulomb matrix construction via compressing the
  interactions between continuous charge distributions},
\newblock \bibinfo{journal}{SIAM Journal on Scientific Computing}
  \bibinfo{volume}{42} (\bibinfo{year}{2020}) \bibinfo{pages}{A162--A186}.
\bibitem[{Rosal~Sandberg(2014)}]{meth}
\bibinfo{author}{J.~A. Rosal~Sandberg}, \bibinfo{title}{New efficient integral
  algorithms for quantum chemistry}, Ph.D. thesis, KTH, Theoretical Chemistry
  and Biology, \bibinfo{year}{2014}. \bibinfo{note}{QC 20140826}.
\bibitem[{Hansen(1987)}]{chaillat}
\bibinfo{author}{P.~C. Hansen},
\newblock \bibinfo{title}{The truncatedsvd as a method for regularization},
\newblock \bibinfo{journal}{BIT Numerical Mathematics} \bibinfo{volume}{27}
  (\bibinfo{year}{1987}) \bibinfo{pages}{534--553}.
\bibitem[{Chollet et~al.(2022)Chollet, Claeys, Fortin, and
  Grigori}]{chollet:hal-03563005}
\bibinfo{author}{I.~Chollet}, \bibinfo{author}{X.~Claeys},
  \bibinfo{author}{P.~Fortin}, \bibinfo{author}{L.~Grigori}, \bibinfo{title}{{A
  Directional Equispaced interpolation-based Fast Multipole Method for
  oscillatory kernels}}, \bibinfo{year}{2022}. \URLprefix
  \url{https://hal.archives-ouvertes.fr/hal-03563005}, \bibinfo{note}{working
  paper or preprint}.
\bibitem[{Lu~B(2007)}]{LU20071348}
\bibinfo{author}{M.~J. Lu~B, Cheng~X},
\newblock \bibinfo{title}{New-version-fast-multipole-method" accelerated
  electrostatic interactions in biomolecular systems},
\newblock \bibinfo{journal}{J Comput Phys.}  (\bibinfo{year}{2007}).
\bibitem[{Hedeg{\aa}rd et~al.(2015)Hedeg{\aa}rd, Knecht, Kielberg, Jensen, and
  Reiher}]{Hedeg_rd_2015}
\bibinfo{author}{E.~D. Hedeg{\aa}rd}, \bibinfo{author}{S.~Knecht},
  \bibinfo{author}{J.~S. Kielberg}, \bibinfo{author}{H.~J.~A. Jensen},
  \bibinfo{author}{M.~Reiher},
\newblock \bibinfo{title}{Density matrix renormalization group with efficient
  dynamical electron correlation through range separation},
\newblock \bibinfo{journal}{The Journal of Chemical Physics}
  \bibinfo{volume}{142} (\bibinfo{year}{2015}) \bibinfo{pages}{224108}.

\end{thebibliography}






\end{document}